\newcommand{\va}{{\mathbf{a}}}
\newcommand{\vd}{{\mathbf{d}}}
\newcommand{\vg}{{\mathbf{g}}}
\newcommand{\vh}{{\mathbf{h}}}
\newcommand{\vu}{{\mathbf{u}}}
\newcommand{\vw}{{\mathbf{w}}}
\newcommand{\vx}{{\mathbf{x}}}
\newcommand{\vy}{{\mathbf{y}}}
\newcommand{\vz}{{\mathbf{z}}}
\newcommand{\vU}{{\mathbf{U}}}
\newcommand{\vV}{{\mathbf{V}}}
\newcommand{\vX}{{\mathbf{X}}}
\newcommand{\cA}{{\mathcal{A}}}
\newcommand{\cG}{{\mathcal{G}}}
\newcommand{\cI}{{\mathcal{I}}}
\newcommand{\cK}{{\mathcal{K}}}
\newcommand{\cM}{{\mathcal{M}}}
\newcommand{\cN}{{\mathcal{N}}}
\newcommand{\cO}{{\mathcal{O}}}
\newcommand{\cP}{{\mathcal{P}}}
\newcommand{\cX}{{\mathcal{X}}}
\newcommand{\EE}{\mathbb{E}} 
\newcommand{\RR}{\mathbb{R}} 
\newcommand{\vzero}{\mathbf{0}} 
\newcommand{\dist}{\mathrm{dist}}    
\newcommand{\Prob}{{\mathrm{Prob}}} 
\newcommand{\tr}{{\mathrm{tr}}} 
\DeclareMathOperator*{\argmin}{arg\,min} 
\newcommand{\bm}[1]{\boldsymbol{#1}}
\newcommand{\st}{\mbox{s.t.}}
\newcommand{\bc}{\begin{center}}
\newcommand{\ec}{\end{center}}
\newcommand{\bdm}{\begin{displaymath}}
\newcommand{\edm}{\end{displaymath}}
\newcommand{\beq}{\begin{equation}}
\newcommand{\eeq}{\end{equation}}
\newcommand{\bfl}{\begin{flushleft}}
\newcommand{\efl}{\end{flushleft}}
\newcommand{\bt}{\begin{tabbing}}
\newcommand{\et}{\end{tabbing}}
\newcommand{\beqn}{\begin{eqnarray}}
\newcommand{\eeqn}{\end{eqnarray}}
\newcommand{\beqs}{\begin{align*}} 
\newcommand{\eeqs}{\end{align*}}  
\newtheorem{theorem}{Theorem}
\newtheorem{assumption}{Assumption}
\newtheorem{corollary}{Corollary}
\newtheorem{remark}{Remark}
\newtheorem{lemma}{Lemma}
\def\myref{{\global\advance\refnum by 1} {\bf \large Lecture \the \refnum. }}
\newcommand{\prox}{\mathbf{prox}}
\begin{document}

\title{Block Stochastic Gradient Iteration\\ for Convex and Nonconvex Optimization
}

\author{Yangyang Xu\thanks{\url{yangyang.xu@rice.edu}, Computational and Applied Mathematics, Rice University, Houston, TX.}
\and Wotao Yin\thanks{\url{wotaoyin@math.ucla.edu}, Department of Mathematics, UCLA, Los Angeles, CA.}
}

\date{\today}

\maketitle

\begin{abstract}
The stochastic gradient (SG) method can quickly solve a problem with a large number of components in the objective, or a stochastic optimization problem, to a moderate accuracy. The block coordinate descent/update (BCD) method, on the other hand, can quickly solve  problems with multiple (blocks of) variables. This paper introduces a method that combines the  great features of SG and BCD for  problems with many components in the objective and with multiple (blocks of) variables.


This paper proposes  a block stochastic gradient (BSG) method for both convex and nonconvex programs.  BSG generalizes SG by   updating all the blocks of variables  in the Gauss-Seidel type (updating the current block depends on the previously updated block), in either a fixed or randomly shuffled order. Although BSG has slightly more work at each iteration, it typically  outperforms  SG because of BSG's Gauss-Seidel updates and larger stepsizes, the latter of which are determined by the smaller per-block Lipschitz constants.


The convergence of BSG  is established  for both convex and nonconvex cases. In the convex case, BSG has the same order of convergence rate as   SG. In the nonconvex case, its convergence is established in terms of the expected violation of a first-order optimality condition. In both cases our analysis is nontrivial since the typical unbiasedness assumption no longer holds. 

BSG is numerically evaluated on the following problems:  \emph{stochastic least squares} and \emph{logistic regression}, which are convex, and \emph{low-rank tensor recovery} and \emph{bilinear logistic regression}, which are nonconvex. On the convex problems, BSG  performed   significantly better than SG. On the nonconvex problems, 
 BSG   significantly outperformed the deterministic BCD method because the latter tends to early stagnate  near  local minimizers.  Overall, BSG inherits the benefits of both stochastic gradient approximation and block-coordinate updates and is especially useful for solving large-scale nonconvex problems.
\end{abstract}

\section{Introduction}
In many engineering and machine learning problems, we are facing optimization problems that involve a huge amount of data. 
It is often very expensive to use such a huge amount of data for every update of the problem variables, and a more efficient way is to sample a small amount from the collected data for each renewal of the variables.

Keeping this in mind, in this paper, we consider the stochastic program
\begin{equation}\label{eq:main}
\min_{\vx} \Phi(\vx)=\EE_\xi f(\vx;\xi)+\sum_{i=1}^s r_i(\vx_i), \ \st\ \vx_i\in\cX_i,\ i = 1,\ldots,s,
\end{equation}
where $\cX_i\subset\RR^{n_i},\forall i$, are convex constraint sets, the variable $\vx\in\RR^n$ is partitioned into disjoint blocks $\vx=(\vx_1,\ldots,\vx_s)$ of the dimension $n=\sum_{i=1}^s n_i$, $\xi$ is a random variable, $\EE_\xi f(\vx;\xi)$ is continuously differentiable, and $r_i$ are  regularization functions (possibly non-differentiable) such as $\ell_1$-norm $\|\vx_i\|_1$ or $\ell_0$ seminorm $\|\vx_i\|_0$ for sparse or other structured solutions. Throughout the paper, we let
\begin{align*}
F(\vx)=\EE_\xi f(\vx;\xi),\qquad
R(\vx)=\sum_{i=1}^sr_i(\vx_i),
\end{align*}
and for simplicity, we omit the subscript $\xi$ in the expectation operator without causing confusion.

Note that by assuming $\Prob\{\xi=\ell\}=\frac{1}{N},\ell = 1,\ldots,N$, \eqref{eq:main} includes as a special case the following deterministic program
\begin{equation}\label{eq:det}
\min_\vx \frac{1}{N}\sum_{\ell=1}^N f_\ell(\vx) +\sum_{i=1}^s r_i(\vx_i),\ \st \ \vx_i\in\cX_i,\forall i,
\end{equation}
where $N$ is often very large. Many problems in applications can be written in the form of \eqref{eq:main} or \eqref{eq:det} such as LASSO \cite{tibshirani1996regression}, sparse logistic regression \cite{shevade2003simple}, bilinear logistic regression \cite{dyrholm2007bilinear, shi2014sparse}, sparse dictionary learning \cite{mairal2009online}, low-rank matrix completion problem \cite{CandesRecht2008}, and so on.

We allow $F$ and $r_i$ to be nonconvex. When they are convex, we have sublinear convergence of the proposed method (see Algorithm \ref{alg:bsg}) in terms of objective value. Without convexity, we establish global convergence in terms of the expected violation of a first-order optimality condition. In addition, numerical experiments demonstrate that our algorithm can perform very well on both convex and nonconvex problems.

\subsection{Motivation}
One difficulty to solve \eqref{eq:main} is that it may be impossible or very expensive to accurately calculate the expectation to evaluate the objective and  its gradient or subgradient. One approach is the \emph{stochastic average approximation} (SAA) method \cite{kleywegt2002sample}, which generates a set of samples 
and then solves the empirical risk minimization problem
by a certain optimization method.

Another  approach  is the \emph{stochastic gradient} (SG) method (see \cite{robbins1951stochastic, polyak1990new, nemirovski2009robust} and the references therein), which assumes that a stochastic gradient $\vg(\vx;\xi)$ of $F$ can be obtained by a certain oracle and then iteratively performs the update
\begin{equation}\label{eq:sg}
\vx^{k+1}=\argmin_{\vx\in\cX}\, \langle \vg(\vx^k;\xi_k)+\tilde{\nabla} R(\vx^k),\vx-\vx^k\rangle +\frac{1}{2\alpha_k}\|\vx-\vx^k\|^2,
\end{equation}
where $\cX=\cX_1\times\ldots\times\cX_s$, and $\tilde{\nabla} R(\vx^k)$ is a subgradient of $R$ at $\vx^k$.
In \eqref{eq:sg}, $\xi_k$ is a realization of $\xi$ at the $k$th iteration, and $\alpha_k$ is a stepsize that is typically required to asymptotically reduce to \emph{zero} for convergence. The work \cite{nemirovski2009robust} compares SAA and SG and demonstrates that the latter is competitive and sometimes significantly outperforms the former  for solving a certain class of problems including the stochastic utility problem and stochastic max-flow problem. The SG method has also been popularly used (e.g., \cite{zhang2004solving, ShalevTewari2009, gemulla2011large, shalev2011pegasos, recht2013parallel}) to solve deterministic programming in the form of \eqref{eq:det} and exhibits advantages over the deterministic gradient method when $N$ is large and high solution accuracy is not required. 

To solve (deterministic) problems with separable nonsmooth terms as in \eqref{eq:det}, the block coordinate descent (BCD) method (see \cite{luo1992convergence, Grippo-Sciandrone-00, Tseng-01, TsengYun2009, wen2012block, xu2013block} and the references therein) has been widely used. At each iteration, BCD updates only one block of variables and thus can have a much lower per-iteration complexity than  methods updating all the variables together. BCD has been found efficient solving many large-scale problems (see \cite{ChangHsiehLin2008, nesterov2012efficiency, richtarik2012iteration, wen2012block, peng2013parallel} for example).

\subsection{Our algorithm} In order to take advantages of the structure of \eqref{eq:main} and maintain the benefits of BCD, we generalize SG to a \emph{block  stochastic gradient} (BSG) method,  which is given in Algorithm \ref{alg:bsg}.
\begin{algorithm}\caption{Block stochastic gradient for solving \eqref{eq:main}}\label{alg:bsg}
{\small
\DontPrintSemicolon
\textbf{Input:} starting point $\vx^1$, step sizes $\{\alpha_i^k:i=1,\ldots,s\}_{k=1}^\infty$, and positive integers $\{m_k\}_{k=1}^\infty$.\;
\For{$k=1,2,\ldots$}{
Generate a mini batch of samples $\Xi_k=\{\xi_{k,1},\xi_{k,2},\ldots,\xi_{k,m_k}\}$.\;
Define update order $\pi_i^k=i,\,i=1,\ldots,s$, or randomly shuffle $\{1,2,\ldots,s\}$ to $\{\pi_1^k,\pi_2^k,\ldots,\pi_s^k\}$.\;
\For{$i=1,\ldots,s$}{
Compute sample gradient for  the $\pi_i^k$th block
$$\tilde{\vg}_i^k=\frac{1}{m_k}\sum_{\ell=1}^{m_k}\nabla_{\vx_{\pi_i^k}}f(\vx_{\pi_{<i}^k}^{k+1},\vx_{\pi_{\ge i}^k}^k; \xi_{k,\ell}).$$
\If{$\cX_{\pi_i^k}=\RR^{n_{\pi_i^k}}$ (the $i$th block is unconstrained)}{Update the $\pi_i^k$th block
\begin{equation}\label{eq:update}
\vx_{\pi_i^k}^{k+1}=\argmin_{\vx_{\pi_i^k}}\langle \tilde{\vg}_i^k, \vx_{\pi_i^k}-\vx_{\pi_i^k}^k\rangle+\frac{1}{2\alpha_{{\pi_i}}^k}\|\vx_{\pi_i^k}-\vx_{\pi_i^k}^k\|^2+r_{\pi_i^k}(\vx_{\pi_i^k}).
\end{equation}
}
\Else{Update the $\pi_i^k$th block
\begin{equation}\label{eq:update2}
\vx_{\pi_i^k}^{k+1}=\argmin_{\vx_{\pi_i^k}\in\cX_{\pi_i^k}}\langle \tilde{\vg}_i^k+\tilde{\nabla} r_{\pi_i^k}(\vx_{\pi_i^k}^k), \vx_{\pi_i^k}-\vx_{\pi_i^k}^k\rangle+\frac{1}{2\alpha_{{\pi_i}}^k}\|\vx_{\pi_i^k}-\vx_{\pi_i^k}^k\|^2.
\end{equation}
}
}
}
}
\end{algorithm}

In the algorithm, we assume that samples of $\xi$ are randomly generated.  We let $\tilde{\vg}_i^k$
be a stochastic approximation of $\nabla_{\vx_{\pi_i^k}}F(\vx_{\pi_{<i}^k}^{k+1},\vx_{\pi_{\ge i}^k}^k)$,
where $\vx_{\pi_{<i}}$ is short for $(\vx_{\pi_1},\ldots,\vx_{\pi_{i-1}})$. In \eqref{eq:update2}, $\tilde{\nabla} r_j(\vx_j^k)$ is a subgradient of $r_j$ at $\vx_j^k$, and we assume it exists for all $j$ and $k$. We assume that both \eqref{eq:update} and \eqref{eq:update2} are easy to solve. We perform two different updates. 
When $\cX_i=\RR^{n_i}$, we prefer \eqref{eq:update} over \eqref{eq:update2} since proximal gradient iteration is typically faster than proximal subgradient iteration (see \cite{goffin1977convergence, boyd2003subgradient, nesterov2009primal, nesterov2014convergent} and the references therein); when $\cX_i\neq\RR^{n_i}$, we  use \eqref{eq:update2}, which takes the subgradient of $r_i$, since minimizing  the nonsmooth function $r_i$ subject to constraints is generally difficult.

One can certainly take $m_k=1$, $\forall k$, or use larger $m_k$'s. In general,  a larger $m_k$ leads to a  lower sample variance and incurs more computation of $\tilde{\vg}_i^k$. Note that at the beginning of each cycle, we allow  a reshuffle of the blocks, which can often lead to better overall numerical performance especially for nonconvex problems, as demonstrated in \cite{xu-yin-ebcd14}. For the convenience of our discussion and easy notation, we assume $\pi_i^k\equiv i,\,\forall i,k,$ throughout our analysis, i.e.,  all iterations of the algorithm update the blocks in the same ascending order. However, our analysis still goes through if the order is shuffled at the beginning of each cycle, and some of our numerical experiments use reshuffling. 

\subsection{Related work}
The BCD and SG methods are special cases of the proposed BSG method.
In Algorithm \ref{alg:bsg}, if $s=1$, i.e., there is only one block of variables, the update in \eqref{eq:update2} becomes the SG update \eqref{eq:sg}, and if $\pi_i^k=i$, and $\tilde{\vg}_i^k=\nabla_{\vx_i}F(\vx_{<i}^{k+1},\vx_{\ge i}^k), \forall i, k$, it becomes the BCD update in \cite{TsengYun2009, xu2013block}. For solving problem \eqref{eq:det}, a special case of \eqref{eq:main}, the deterministic BCD method in \cite{TsengYun2009, xu2013block} requires the partial gradients of all of the component functions for every block update while BSG uses only one or several of them, and thus the BSG update is much cheaper. On the other hand, BSG updates the variables in a Gauss-Seidel-like manner while SG does it in a Jacobi-like manner. Hence, BSG often takes fewer iterations than SG, and our numerical experiments demonstrate such an advantage of BSG over SG. For the reader's convenience, we list the related methods in Table \ref{table:abbr}.

\begin{table}\caption{List of related methods}\label{table:abbr}
\begin{center}
{\footnotesize
\begin{tabular}{|l|c|}\hline
\textbf{Abbreviation} & \textbf{Method}\\\hline
BSG (this paper) & block coordinate update using stochastic gradient\\\hline
SG \cite{nemirovski2009robust, lan2012optimal} & stochastic gradient\\\hline
BCD \cite{luo1992convergence, Tseng-01} & deterministic, block coordinate (minimization) descent\\\hline
BCGD \cite{TsengYun2009} & deterministic, block coordinate gradient descent\\\hline
SBCD \cite{nesterov2012efficiency} & stochastic block coordinate descent\\\hline
SBMD \cite{dang2013stochastic} & stochastic block mirror descent \\\hline
\end{tabular}}
\end{center}
\end{table}

The BCD method has a long history dating back to the 1950s \cite{Hildreth-57}, which considers strongly concave quadratic programming. Its original format is  block coordinate minimization (BCM), which cyclically updates all the blocks by minimizing the objective with respect to one block at a time whiling fixing all the others at their most recent values. The convergence of BCM has been  extensively analyzed in the literature for both convex and nonconvex cases; see \cite{luo1992convergence, GrippoSciandrone2000, Tseng-01, xu2013block, razaviyayn2013unified} for example. The work \cite{auslender1992asymptotic} combines the proximal point method \cite{rockafellar1976monotone, EcksteinBertsekas1992} with BCM and proposes a proximal block-coordinate update scheme. It was shown in \cite{navasca2008swamp} that such a scheme can perform better than the original BCM scheme for solving the tensor decomposition problem. Although BCD is straightforward to understand, there is no convergence rate known for general convex programming\footnote{The earlier work \cite{luo1992convergence} establishes the linear convergence of BCM by assuming strong convexity on the objective.} until \cite{nesterov2012efficiency}, which proposes a stochastic block-coordinate descent (SBCD) method. At each iteration, the SBCD method randomly chooses one block of variables and updates it by performing one proximal gradient update. SBCD is analyzed in \cite{nesterov2012efficiency} for the smooth convex case, and the analysis is generalized to non-smooth convex case in \cite{richtarik2012iteration, lu2013complexity}. It is shown that SBCD has the same order of convergence rate for the non-smooth case as it does in the smooth case. For general convex case, SBCD has sublinear convergence in terms of expected objective value, and for strongly convex case, it converges linearly.  SBCD has also been analyzed in \cite{lu2013randomized} for non-smooth nonconvex case. Recently, \cite{saha2013nonasymptotic, beck2013convergence} showed that cyclic BCD can have the same order of convergence rate as  SBCD for smooth convex programming, and \cite{hong2013iteration} then extended the work of \cite{beck2013convergence} to non-smooth convex case.

The SG method also has a long history and dates back to the pioneering work \cite{robbins1951stochastic}. Since then, the SG method becomes very popular in  stochastic programming. The classic analysis (e.g., in \cite{chung1954stochastic, sacks1958asymptotic}) requires second-order differentiability and also strong convexity, and under these assumptions, the method exhibits an asymptotically optimal convergence rate $\cO(1/k)$ in terms of  expected objective value. A great improvement was made in \cite{nemirovskici1983problem, polyak1990new}, which propose a robust SG method applicable to general convex problems and obtain a non-asymptotic convergence rate $\cO(1/\sqrt{k})$ by averaging all the iterates. These results were revisited in \cite{nemirovski2009robust}, which, in addition, proposes a mirror descent stochastic approximation method and achieves the same order of convergence rate with a better constant. Furthermore, the mirror descent method is accelerated in \cite{lan2012optimal, ghadimi2013optimal}, where the convergence results are strengthened for composite convex stochastic optimization. Recently, the SG method has been extended in \cite{ghadimi2013stochastic, ghadimi2013mini, ghadimi2013accelerated} to handle nonconvex problems, for which the convergence results are established in terms of the violation of first-order optimality conditions.

A very relevant work to ours is \cite{dang2013stochastic}, which proposes a \emph{stochastic block mirror descent} (SBMD) method by combining SBCD with the mirror descent stochastic approximation method. The main difference between SBMD and our proposed BSG is that at each iteration SBMD randomly chooses one block of variables to update while our BSG cyclically updates all the blocks of variables, the later updated blocks depending on the early updated blocks. We will demonstrate that BSG is competitive and often performs significantly better than SBMD when using the same number of samples. The practical advantage of BSG over SBMD should be intuitive and natural. For solving \eqref{eq:main}, both BSG and SBMD need samples or experimental observations of $\xi$. When the samples arrive sequentially, the computing processor can be idle between the arrivals of two samples if they only update  one block of variables  and thus wastes the computing resource. Technically, SBMD has unbiased stochastic block partial gradient by choosing one block randomly to update while cyclic block update of BSG results in biased partial gradient, and thus the analysis of BSG would be more challenging than that of SBMD. To get the same order of convergence rate for BSG, we will require stronger assumptions. Specifically, the analysis of SBMD in \cite{dang2013stochastic} does not assume boundedness of the iterates while our analysis of BSG requires such boundedness in expectation. SBMD also appeared in \cite{liu2014syn-random-Kaczmarz} for solving linear systems. Although the analysis in \cite{liu2014syn-random-Kaczmarz} assumes that one coordinate is updated at each iteration, its implementation updates all coordinates corresponding to the nonzeros of a randomly sampled row vector, and thus it explains somehow the benefit of updating all the blocks of variables instead of just one  at a time.

\subsection{Contributions}
We summarize our contributions as follows.
\begin{itemize}
\item We propose a BSG method for solving both convex and nonconvex optimization problems. The update order of the blocks of variables at each iteration is arbitrary and independent of other iterations; it can be fixed or shuffled. The new method inherits the benefits of both SG and BCD methods. It is applicable to stochastic programs in the form of \eqref{eq:main}, and it applies to deterministic problems in the form of \eqref{eq:det} with a huge amount of training data. It applies to problems with many variables. It allows both $N$ and $n$ to be large.
\item We analyze the BSG method for both convex and nonconvex problems. For convex problems, we show that it has the same order of convergence rate as that of the SG method, and for nonconvex problems, we establish its global convergence in terms of the expectation violation of first-order optimality conditions.
\item We demonstrate applying the BSG method to  two convex and  two nonconvex problems with both synthetic and real-world data, and we compared it to the stochastic methods SG and SBMD, and to the  deterministic method BCD. The numerical results demonstrate that it is at least comparable to, and  is often significantly better, than SG and SBMD on convex problems, and  it significantly outperforms the deterministic BCD on nonconvex problems. 
\end{itemize}

\subsection*{Notation}
Throughout the paper, we restrict our points in $\RR^n$ and use $\|\cdot\|$ for the Euclidean norm, but it is not difficult to generalize the analysis to any finite Hilbert space with a pair of primal and dual norms. We use $\vx_{<i}$ for $(\vx_1,\ldots,\vx_{i-1})$, $\nabla_{\vx_i}F(\vx)$ for the partial gradient of $F$ at $\vx$ with respect to $\vx_i$, and $\tilde{\nabla} r_i(\vx_i)$ as a subgradient in $\partial r(\vx_i)$, which is the limiting subdifferential (see \cite{rockafellar1998variational}). Scalars $\alpha_i^k, \alpha_k,\ldots$ are reserved for stepsizes and $L, L_{r_i},\ldots$ for Lipschitz constants. We let $\Xi_k$ denote the random set of samples generated at the $k$th iteration and $\bm{\Xi}_{[k]}=(\Xi_1,\ldots,\Xi_k)$ as the history of random sets from the 1st through  $k$th iteration.  $\EE[X|Y]$ denotes the expectation of $X$ conditional on $Y$. In addition, we partition the block set $\{1,2,\ldots,s\}$ to $\cI_1$ and $\cI_2$, where $$\cI_1=\{i: \vx_i\text{ is updated by }\eqref{eq:update}\}\qquad\text{and}\qquad \cI_2=\{i: \vx_i\text{ is updated by }\eqref{eq:update2}\}.$$

Given any set $\cX$, we let
$$\iota_\cX(\vx)=\left\{\begin{array}{ll}
0, &\text{ if }\vx\in\cX,\\
+\infty, &\text{ otherwise,}
\end{array}\right.$$
be the indicator function of $\cX$. For any function $r$ and any convex set $\cX$, we let
$$\prox_{\alpha r}(\vy)=\argmin_\vx r(\vx)+\frac{1}{2\alpha}\|\vx-\vy\|^2$$
be the proximal mapping, and
$$\cP_\cX(\vy)=\argmin_{\vx\in\cX} \|\vx-\vy\|^2$$
be the projection onto $\cX$. When $r$ is convex, $\prox_{\alpha r}$ is nonexpansive for any $\alpha>0$, i.e.,
\begin{equation}\label{eq:proxnx}\|\prox_{\alpha r}(\vx)-\prox_{\alpha r}(\vy)\|\le \|\vx-\vy\|,\ \forall \vx,\vy.
\end{equation} Note that $\cP_\cX=\prox_{\iota_\cX}$, and thus $\cP_\cX$ is nonexpansive for any convex set $\cX$.
Other notation will be specified when they appear.

\section{Convergence analysis}
In this section, we analyze the convergence of Algorithm \ref{alg:bsg} under different settings. Without loss of generality, we assume a fixed update order in Algorithm \ref{alg:bsg}:
$$\pi_i^k=i,\quad \forall i,k,$$
since the analysis still holds otherwise.
Hence, we have
$$\tilde{\vg}_i^k=\frac{1}{m_k}\sum_{\ell=1}^{m_k}\nabla_{\vx_i}f(\vx_{<i}^{k+1},\vx_{\ge i}^k;\xi_{k,\ell}).$$
Define
\begin{align*}
&\vg_i^k=\nabla_{\vx_i} F(\vx_{<i}^{k+1},\vx_{\ge i}^k),\\
&\bm{\delta}_i^k=\tilde{\vg}_i^k-\vg_i^k.
\end{align*}
Note that although $\vg_i^k$ is defined using the full function $F$ and does not explicitly depend on the random sample set $\Xi_k$, it in fact does depend on $\Xi_k$ since $\vx_j^{k+1},\forall j<i$, depend on $\Xi_k$. This is a big difference between BSG and SG and makes our analysis much more challenging.
In our analysis below, some of the following assumptions will be made for each result.
\begin{assumption}\label{assump1}
There exist a constant $A$ and a sequence $\{\sigma_k\}$ such that for any $i$ and $k$,
\begin{subequations}\label{eq:assump1}
\begin{align}
&\big\|\EE \big[\bm{\delta}_i^k|\bm{\Xi}_{[k-1]}\big]\big\|\le A\cdot\max_j \alpha_j^k,\label{eq:mean}\\
&\EE \|\bm{\delta}_i^k\|^2\le \sigma^2_k.\label{eq:var}
\end{align}
\end{subequations}
\end{assumption}

\begin{assumption}\label{assump2}
The objective function is lower bounded, i.e., $\Phi(\vx)>-\infty$.
There is a uniform Lipschitz constant $L>0$ such that
\begin{equation}\label{eq:lipobj}
\|\nabla_{\vx_i}F(\vx)-\nabla_{\vx_i}F(\vy)\|\le L\|\vx-\vy\|,\quad \forall \vx,\vy,~\forall i.
\end{equation}
\end{assumption}

\begin{assumption}\label{assump3}
There exists a constant $\rho$ such that $\EE\|\vx^k\|^2\le \rho^2$ for all $k$.
\end{assumption}

\begin{assumption}\label{assump4}
Every  function $r_i$ is Lipschitz continuous, namely, there is a constant $L_{r_i}$ such that
\begin{equation*}\label{eq:lipr}
\|r_i(\vx_i)-r_i(\vy_i)\|\le L_{r_i}\|\vx_i-\vy_i\|,~ \forall \vx_i, \vy_i.
\end{equation*}
We let
$$L_{\max}=\max_i L_{r_i}$$
be the dominant Lipschitz constant.
\end{assumption}
\begin{remark}
In Assumption \ref{assump1}, we assume  a bounded $\EE[\bm{\delta}_i^k|\bm{\Xi}_{k-1}]$ in \eqref{eq:mean} since the common assumption $\EE[\bm{\delta}_i^k|\bm{\Xi}_{k-1}]=\vzero$  fails to hold in our algorithm. Since the block updates in Algorithm \ref{alg:bsg} are Gauss-Seidel, the gradient error $\bm{\delta}_i^k$ typically has a nonlinear dependence on $\vx_j^{k+1},\forall  j<i$, which depends on $\bm{\Xi}_{k}$. On the other hand,  the boundedness assumption \eqref{eq:mean} holds {under proper
conditions}. For example, in \eqref{eq:main}, let $f(\vx;\xi)=f_\xi(\vx)$ with $\Prob\{\xi=\ell\}=\frac{1}{N}, \ell=1,\ldots,N$. Then $F(\vx)=\frac{1}{N}\sum_{\ell=1}^N f_\ell(\vx)$. Assume that $r_i$'s are convex and $\cX_i=\RR^{n_i},\forall i$, and that $f_\ell$ has Lipschitz continuous partial gradient with a uniform constant $L$, namely,
$$\|\nabla_{\vx_i} f_\ell(\vx)-\nabla_{\vx_i} f_\ell(\vy)\|\le L\|\vx-\vy\|,\quad \forall i, \ell,\forall \vx,\vy.$$
In addition, assume that each $\Xi_k$ is a singleton, i.e., $\Xi_k=\{\xi_k\}$ with $\xi_k$ uniformly selected from $\{1,\ldots,N\}$. Then
$$\EE[\tilde{\vg}_i^k|\bm{\Xi}_{[k-1]}]=\sum_{\ell=1}^N\Prob\{\xi_k=\ell\}\nabla_{\vx_i}f_\ell(
\vy_{<i}^{\ell,k+1},\vx_{\ge i}^k)=\frac{1}{N}\sum_{\ell=1}^N\nabla_{\vx_i}f_\ell(
\vy_{<i}^{\ell,k+1},\vx_{\ge i}^k),
$$
where $\vy_1^{\ell,k+1}=\prox_{\alpha_1^k r_1}\big(\vx_1^k-\alpha_1^k\nabla_{\vx_1}f_\ell(\vx^k)\big)
$ and $\vy_p^{\ell,k+1}=\prox_{\alpha_p^k r_p}\big(\vx_p^k-\alpha_p^k\nabla_{\vx_p}f_\ell(\vy_{<p}^{\ell,k+1}
,\vx_{\ge p}^k)\big)$ for $p\ge 2$, and
\begin{align*}
\EE[\vg_i^k|\bm{\Xi}_{[k-1]}]=&\sum_{m=1}^N\Prob\{\xi_k=m\}\nabla_{\vx_i}F(\vy_{<i}^{m,k+1},\vx_{\ge i}^k)&\qquad(\text{note }\vg_i^k\text{ depends on }\xi_k)\\
=&\frac{1}{N^2}\sum_{m=1}^N\sum_{\ell=1}^N\nabla_{\vx_i}f_\ell(\vy_{<i}^{m,k+1},\vx_{\ge i}^k)&\qquad(\text{Substitute }F)\\
=&\frac{1}{N^2}\sum_{\ell=1}^N\sum_{m=1}^N\nabla_{\vx_i}f_\ell(\vy_{<i}^{m,k+1},\vx_{\ge i}^k).
\end{align*}
Combining the above formulas of $\EE[\tilde{\vg}_i^k|\bm{\Xi}_{[k-1]}]$ and $\EE[\vg_i^k|\bm{\Xi}_{[k-1]}]$ gives
\begin{align*}
\big\|\EE[\bm{\delta}_i^k|\bm{\Xi}_{[k-1]}]\big\|=&\big\|\EE[\tilde{\vg}_i^k-\vg_i^k |\bm{\Xi}_{[k-1]}]\big\|\\
\le&\frac{1}{N^2}\sum_{\ell=1}^N\sum_{m=1}^N\big\|\nabla_{\vx_i}f_\ell(
\vy_{<i}^{\ell,k+1},\vx_{\ge i}^k)-\nabla_{\vx_i}f_\ell(\vy_{<i}^{m,k+1},\vx_{\ge i}^k)\big\|\\
\le &\frac{L}{N^2}\sum_{\ell=1}^N\sum_{m=1}^N\|\vy_{<i}^{\ell,k+1}-\vy_{<i}^{m,k+1}\|\qquad (\text{ from gradient Lipschitz continuity of }f_\ell)\\
\le &\frac{L}{N^2}\sum_{\ell=1}^N\sum_{m=1}^N\sum_{j<i}\|\vy_{j}^{\ell,k+1}-\vy_{j}^{m,k+1}\|\\
\le&\frac{L}{N^2}\sum_{\ell=1}^N\sum_{m=1}^N\sum_{j<i}\alpha_j^k\big\|\nabla_{\vx_j}f_\ell(\vy_{<j}^{\ell,k+1}
,\vx_{\ge j}^k)-\nabla_{\vx_j}f_m(\vy_{<j}^{m,k+1}
,\vx_{\ge j}^k)\big\|,
\end{align*}
where the last inequality is from the nonexpansiveness of the proximal mapping in \eqref{eq:proxnx}.
Therefore, if $\|\nabla_{\vx_i}f_\ell(\vx)\|\le M, \forall i, \ell, \forall \vx$, then we have from the above inequality that
$$\big\|\EE[\bm{\delta}_i^k|\bm{\Xi}_{k-1}]\big\|\le 2LsM\max_j\alpha_j^k.$$

The second condition \eqref{eq:var} in Assumption \ref{assump1} is standard in the literature of stochastic gradient method. The variance bound $\sigma_k^2$ is allowed to vary with the iteration $k$. 
\end{remark}

\begin{remark}\label{rm:bditer}
Assumption \ref{assump3} is relatively  weaker than the assumption made in the literature of stochastic gradient method; for example, \cite{nemirovski2009robust, lan2012optimal}  assume $\vx\in\cX$ for a bounded set $\cX$. Assumption \ref{assump3} is needed because the stochastic partial gradient $\tilde{\vg}_i^k$ may be biased (see Remark \ref{rm:cvx-rst}). Our analysis below for nonconvex case will remain valid if the assumption is weakened from the boundedness of $\{\EE\|\vx^k\|^2\}$ to that of $\{\EE\|\nabla F(\vx^k)\|^2\}$. 

Note that Assumption \ref{assump3} together with the partial gradient Lipschitz continuity of $F$ in Assumption \ref{assump2} implies the boundedness of $\{\EE\|\nabla F(\vx^k)\|^2\}$ by the following argument
$$\|\nabla_{\vx_i}F(\vx^k)\|^2\le 2\|\nabla_{\vx_i}F(\vx^k)-\nabla_{\vx_i}F(\vzero)\|^2+2\|\nabla_{\vx_i}F(\vzero)\|^2\le 2L^2\|\vx^k\|^2+2\|\nabla_{\vx_i}F(\vzero)\|^2.$$
Throughout the paper, we let
\begin{equation}\label{eq:defM}
M_\rho=\sqrt{4L^2\rho^2+2\max_i\|\nabla_{\vx_i}F(\vzero)\|^2},
\end{equation}
and thus we have $\EE\|\nabla_{\vx_i}F(\vx_{<i}^{k+1},\vx_{\ge i}^k)\|^2\le M_\rho^2,\forall i, k$. In addition, by Jensen's inequality, we have $\EE\|\vx^k\|\le \rho$, and $\EE\|\nabla_{\vx_i}F(\vx_{<i}^{k+1},\vx_{\ge i}^k)\|\le M_\rho, \forall i,k$.
\end{remark}

\begin{remark}\label{rm:exlip}
Assumption \ref{assump4} is the same as that in \cite{lan2012optimal}. One example is $r_i(\vx_i)=\lambda_i\|\vx_i\|_1$, which satisfies \eqref{eq:lipr} with $L_{r_i}=\lambda_i\sqrt{n_i}$. Note that \eqref{eq:lipr} implies that $\|\vd_i\|\le L_{r_i}$ for any $\vd_i\in\partial r_i(\vx_i)$.
\end{remark}

We first establish some lemmas, whose proofs are given in Appendix \ref{sec:app}. They are not difficult to prove and are useful in our convergence analysis.
\begin{lemma}\label{lem:indp}
Let $\vu^k$  be a random vector depending on $\bm{\Xi}_{[k-1]}$. Under Assumption \ref{assump1}, if $\vu^k$ is independent of $\bm{\delta}_i^k$ conditional on $\bm{\Xi}_{[k-1]}$, then
\begin{equation}\label{eq:indp}
\EE\langle \vu^k, \bm{\delta}_i^k\rangle\le A\big(\max_j \alpha_j^k\big)\EE\|\vu^k\|.
\end{equation}
\end{lemma}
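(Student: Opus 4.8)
The plan is to use the tower property of conditional expectation to reduce the claim to an inequality about $\EE[\bm{\delta}_i^k \mid \bm{\Xi}_{[k-1]}]$, which is exactly what Assumption~\ref{assump1}, part~\eqref{eq:mean}, controls. Concretely, I would first write
\begin{equation*}
\EE\langle \vu^k, \bm{\delta}_i^k\rangle
= \EE\Big[\EE\big[\langle \vu^k, \bm{\delta}_i^k\rangle \,\big|\, \bm{\Xi}_{[k-1]}\big]\Big].
\end{equation*}
The point of the independence hypothesis is that, conditional on $\bm{\Xi}_{[k-1]}$, the vector $\vu^k$ is deterministic-like relative to $\bm{\delta}_i^k$ — more precisely, since $\vu^k$ depends only on $\bm{\Xi}_{[k-1]}$ and is conditionally independent of $\bm{\delta}_i^k$, we may pull $\vu^k$ out of the inner conditional expectation:
\begin{equation*}
\EE\big[\langle \vu^k, \bm{\delta}_i^k\rangle \,\big|\, \bm{\Xi}_{[k-1]}\big]
= \big\langle \vu^k, \EE\big[\bm{\delta}_i^k \,\big|\, \bm{\Xi}_{[k-1]}\big]\big\rangle.
\end{equation*}

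Next I would bound this inner product by Cauchy--Schwarz,
\begin{equation*}
\big\langle \vu^k, \EE\big[\bm{\delta}_i^k \,\big|\, \bm{\Xi}_{[k-1]}\big]\big\rangle
\le \|\vu^k\|\cdot\big\|\EE\big[\bm{\delta}_i^k \,\big|\, \bm{\Xi}_{[k-1]}\big]\big\|
\le A\big(\max_j \alpha_j^k\big)\,\|\vu^k\|,
\end{equation*}
where the last step invokes \eqref{eq:mean}. Taking the outer expectation of both sides and using monotonicity of expectation then yields $\EE\langle \vu^k, \bm{\delta}_i^k\rangle \le A\big(\max_j \alpha_j^k\big)\EE\|\vu^k\|$, which is \eqref{eq:indp}.

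The only genuinely delicate point — the one I would be careful to state precisely rather than wave through — is the factoring step: why conditional independence of $\vu^k$ and $\bm{\delta}_i^k$ given $\bm{\Xi}_{[k-1]}$, combined with $\vu^k$ being $\bm{\Xi}_{[k-1]}$-measurable, lets us replace $\bm{\delta}_i^k$ by its conditional mean inside $\langle\vu^k,\cdot\rangle$. This is a standard fact (a $\bm{\Xi}_{[k-1]}$-measurable factor comes out of the conditional expectation, and conditional independence means the conditional law of $\bm{\delta}_i^k$ is unaffected by also conditioning on $\vu^k$), but it is the load-bearing hypothesis of the lemma, so I would spell it out componentwise if needed. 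Everything after that is routine: Cauchy--Schwarz, the assumed bound \eqref{eq:mean}, and the tower property. I do not anticipate any real obstacle; the lemma is a bookkeeping device to handle the cross terms $\langle \vu^k, \bm{\delta}_i^k\rangle$ that will arise in the main descent-type estimates, where $\vu^k$ will typically be something like $\vx_i^{k+1}-\vx_i^\ast$ or a related quantity that has been arranged to be conditionally independent of the current sample error.
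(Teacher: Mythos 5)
Your argument is correct and follows essentially the same route as the paper: tower property, factoring the inner conditional expectation via the conditional structure, Cauchy--Schwarz, and the bias bound \eqref{eq:mean}. The only cosmetic difference is that the paper keeps $\EE[\vu^k\,|\,\bm{\Xi}_{[k-1]}]$ inside the inner product (so the argument also covers a $\vu^k$ carrying randomness beyond $\bm{\Xi}_{[k-1]}$) and finishes with Jensen's inequality $\EE\big\|\EE[\vu^k|\bm{\Xi}_{[k-1]}]\big\|\le\EE\|\vu^k\|$, whereas you pull $\vu^k$ out directly as $\bm{\Xi}_{[k-1]}$-measurable, which suffices for every application of the lemma in the paper.
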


\begin{lemma}\label{lem:bdh}Let
\begin{equation}\label{eq:gradmap}
\tilde{\vh}_i^k=\frac{1}{\alpha_i^k}(\vx_i^k-\vx_i^{k+1})
\end{equation}
be the stochastic gradient mapping for the $i$th block at the $k$th iteration.
Under Assumption \ref{assump4}, we have
$$\EE\|\tilde{\vh}_i^k\|^2\le 2\EE\|\tilde{\vg}_i^k\|^2+2L_{r_i}^2\le 4M_\rho^2+4\sigma_k^2+2L_{r_i}^2.$$

\end{lemma}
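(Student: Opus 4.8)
The plan is to read $\tilde{\vh}_i^k$ off the optimality conditions of the block subproblems \eqref{eq:update} and \eqref{eq:update2}, to derive in both cases the pointwise bound $\|\tilde{\vh}_i^k\|\le\|\tilde{\vg}_i^k\|+L_{r_i}$, then square it, take expectations, and finally control $\EE\|\tilde{\vg}_i^k\|^2$ by splitting $\tilde{\vg}_i^k=\vg_i^k+\bm{\delta}_i^k$.

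First I would handle the unconstrained blocks $i\in\cI_1$. There $\vx_i^{k+1}$ minimizes the (coercive) objective of \eqref{eq:update}, equivalently $\vx_i^{k+1}=\prox_{\alpha_i^k r_i}(\vx_i^k-\alpha_i^k\tilde{\vg}_i^k)$, so Fermat's rule together with the sum rule for the limiting subdifferential yields
$$\vzero\in\tilde{\vg}_i^k+\frac{1}{\alpha_i^k}(\vx_i^{k+1}-\vx_i^k)+\partial r_i(\vx_i^{k+1}),$$
i.e. $\tilde{\vh}_i^k=\frac{1}{\alpha_i^k}(\vx_i^k-\vx_i^{k+1})=\tilde{\vg}_i^k+\vd$ for some $\vd\in\partial r_i(\vx_i^{k+1})$. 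By Remark \ref{rm:exlip} the Lipschitz continuity of $r_i$ (Assumption \ref{assump4}) forces $\|\vd\|\le L_{r_i}$, and the triangle inequality gives $\|\tilde{\vh}_i^k\|\le\|\tilde{\vg}_i^k\|+L_{r_i}$.

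Next, for the constrained blocks $i\in\cI_2$, completing the square in \eqref{eq:update2} shows $\vx_i^{k+1}=\cP_{\cX_i}\big(\vx_i^k-\alpha_i^k(\tilde{\vg}_i^k+\tilde{\nabla} r_i(\vx_i^k))\big)$. Since $\vx_i^k\in\cX_i$, we have $\vx_i^k=\cP_{\cX_i}(\vx_i^k)$, so the nonexpansiveness of $\cP_{\cX_i}$ in \eqref{eq:proxnx} gives $\|\vx_i^k-\vx_i^{k+1}\|\le\alpha_i^k\|\tilde{\vg}_i^k+\tilde{\nabla} r_i(\vx_i^k)\|$, hence $\|\tilde{\vh}_i^k\|\le\|\tilde{\vg}_i^k\|+\|\tilde{\nabla} r_i(\vx_i^k)\|\le\|\tilde{\vg}_i^k\|+L_{r_i}$, again using Remark \ref{rm:exlip}. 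In either case $\|\tilde{\vh}_i^k\|^2\le(\|\tilde{\vg}_i^k\|+L_{r_i})^2\le 2\|\tilde{\vg}_i^k\|^2+2L_{r_i}^2$, and taking expectations gives the first inequality of the lemma.

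Finally I would bound $\EE\|\tilde{\vg}_i^k\|^2$. With $\tilde{\vg}_i^k=\vg_i^k+\bm{\delta}_i^k$ and $\|\va+\vb\|^2\le 2\|\va\|^2+2\|\vb\|^2$,
$$\EE\|\tilde{\vg}_i^k\|^2\le 2\,\EE\|\vg_i^k\|^2+2\,\EE\|\bm{\delta}_i^k\|^2\le 2M_\rho^2+2\sigma_k^2,$$
where $\EE\|\vg_i^k\|^2=\EE\|\nabla_{\vx_i}F(\vx_{<i}^{k+1},\vx_{\ge i}^k)\|^2\le M_\rho^2$ is supplied by Remark \ref{rm:bditer} (which uses Assumptions \ref{assump2}--\ref{assump3}) and $\EE\|\bm{\delta}_i^k\|^2\le\sigma_k^2$ is \eqref{eq:var}; substituting into the first inequality yields $\EE\|\tilde{\vh}_i^k\|^2\le 4M_\rho^2+4\sigma_k^2+2L_{r_i}^2$. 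The computation is elementary, and the only points needing care are that the constrained update is literally a projection of the feasible point $\vx_i^k$ (so \eqref{eq:proxnx} applies and no normal-cone term ever appears), and that, although the lemma is phrased ``under Assumption \ref{assump4}'', the second inequality tacitly also invokes Assumptions \ref{assump1}--\ref{assump3} through $M_\rho$ and $\sigma_k$.
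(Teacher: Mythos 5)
Your proposal is correct and follows essentially the same route as the paper: the paper's auxiliary Lemma \ref{lem:bdsubdif} is exactly your Fermat-rule argument for $i\in\cI_1$ (stated with squared norms rather than your triangle inequality), the $i\in\cI_2$ case is handled identically via nonexpansiveness of $\cP_{\cX_i}$ applied at the feasible point $\vx_i^k$, and the final bound comes from the same split $\tilde{\vg}_i^k=\vg_i^k+\bm{\delta}_i^k$ with $\EE\|\vg_i^k\|^2\le M_\rho^2$ and \eqref{eq:var}. Your closing remark that the second inequality implicitly uses $M_\rho$ and $\sigma_k$ (hence more than Assumption \ref{assump4}) is also accurate and consistent with the paper's usage.
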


\begin{lemma}\label{lem:subrate}
If a nonnegative scalar sequence $\{A_k\}_{k\ge 1}$ satisfies
$$A_{k+1}\le \left(1-\frac{a}{k}\right)A_k+\frac{b}{k^2},$$
where $a,b>0$,
then it obeys
\begin{equation}\label{eq:subrate}
A_k\le \frac{c}{k},\quad \forall k\ge \lfloor a \rfloor+1,
\end{equation}
with
$$c=\left\{\begin{array}{ll}
2b/(a-1), &\text{ if }a>1,\\
\max\{b/a, A_1\}, & \text{ if } a\le 1,
\end{array}\right.$$
where $\lfloor a\rfloor$ denotes the largest integer that is no greater than $a$.
\end{lemma}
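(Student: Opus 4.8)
The plan is to prove the bound $A_k \le c/k$ by induction on $k$, starting from $k = \lfloor a\rfloor + 1$. The base case and the inductive step have slightly different characters depending on whether $a > 1$ or $a \le 1$, so I would split into these two regimes as in the statement.

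\medskip

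\textbf{Base case.} For $a \le 1$ the base case is $k = 1$ (since $\lfloor a\rfloor = 0$), and $A_1 \le c$ holds trivially because $c \ge A_1$ by definition. For $a > 1$, I need $A_{\lfloor a\rfloor+1} \le c/(\lfloor a\rfloor+1)$; this does not follow from the hypothesis for free, so I would instead argue slightly differently — rather than fixing the base index, I would show by a short separate argument (or absorb it into the induction) that the recursion forces $A_k$ to be controlled once $k$ is large enough. A clean way: observe that for $k \ge \lfloor a\rfloor + 1$ we have $1 - a/k \ge 0$, so the sequence is eventually governed by a genuine contraction-plus-noise recursion, and one can check directly that $c/k$ with $c = 2b/(a-1)$ is a valid ``supersolution'' from the first index where the needed inequality between $c/(k+1)$, $(1-a/k)c/k$, and $b/k^2$ holds. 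I expect the cleanest writeup simply verifies the inductive step algebraically and handles whatever finitely many initial terms are needed by possibly enlarging $c$ (which is already built into the $a\le 1$ branch via the $\max$ with $A_1$; for $a>1$ one checks the constant $2b/(a-1)$ is large enough starting at $\lfloor a\rfloor+1$).

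\medskip

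\textbf{Inductive step.} Assume $A_k \le c/k$ for some $k \ge \lfloor a\rfloor+1$. Then
\[
A_{k+1} \le \Bigl(1 - \frac{a}{k}\Bigr)\frac{c}{k} + \frac{b}{k^2} = \frac{(k-a)c + b}{k^2}.
\]
It remains to show $\dfrac{(k-a)c+b}{k^2} \le \dfrac{c}{k+1}$, i.e. $(k+1)\bigl[(k-a)c+b\bigr] \le c k^2$, which after expanding becomes $c\bigl[(k+1)(k-a) - k^2\bigr] + (k+1)b \le 0$, i.e. $c(k - ak - a) + (k+1)b \le 0$, i.e. $(k+1)b \le c\bigl(a(k+1) - k\bigr) = c\bigl((a-1)k + a\bigr)$. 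When $a > 1$, since $k \ge 1$ we have $(a-1)k + a \ge (a-1)(k+1)$, so it suffices that $(k+1)b \le c(a-1)(k+1)$, i.e. $c \ge b/(a-1)$, and indeed $c = 2b/(a-1)$ works (with room to spare, which also helps absorb the base case). When $a \le 1$, the inductive step needs $k(1 - a) + \text{(lower order)}$ handled more carefully since $(a-1)k$ is nonpositive; here one uses $c \ge b/a$ together with $1 - a/k \ge 0$ and checks the one-step inequality directly, the slack coming from $c \ge b/a$ at the relevant range of $k$, with $c \ge A_1$ covering any small-$k$ discrepancy.

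\medskip

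\textbf{Main obstacle.} The genuinely fiddly part is not the algebra of the inductive step but pinning down the base case cleanly in the $a > 1$ case: the hypothesis gives no information linking $A_1$ to the asymptotic constant, so one must verify that by the index $\lfloor a\rfloor + 1$ the recursion has already ``forgotten'' $A_1$ enough that $c = 2b/(a-1)$ dominates. I would handle this by noting that for indices $k$ with $1 \le k \le \lfloor a\rfloor$ the factor $1 - a/k \le 0$, so $A_{k+1} \le b/k^2 \le b$, giving an explicit $A_{\lfloor a\rfloor+1} \le b$, and then checking $b \le c/(\lfloor a\rfloor+1) = 2b/\bigl((a-1)(\lfloor a\rfloor+1)\bigr)$ reduces to $(a-1)(\lfloor a\rfloor+1) \le 2$, which is exactly where the factor $2$ in the numerator of $c$ is spent — but this is \emph{not} always true (e.g. large $a$), so the correct route is instead to iterate the bound $A_{k+1} \le b/k^2$ across all $k \le \lfloor a\rfloor$ is wrong too; the right statement is that each such step only shrinks, $A_{k+1}\le \max(A_k, b) $ roughly, and then one more careful pass shows $A_{\lfloor a\rfloor+1}$ is bounded in a way compatible with $2b/(a-1)$ times $1/(\lfloor a\rfloor +1)$. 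Getting this bookkeeping exactly right — choosing the constant and the starting index so the induction closes — is the one place I would slow down and be careful.
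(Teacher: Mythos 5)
Your overall strategy (induction with a case split at $a=1$) is the same as the paper's, and your reduction of the inductive step to $(k+1)b\le c\bigl((a-1)k+a\bigr)$ is correct; for $a>1$ your observation that $(a-1)k+a\ge (a-1)(k+1)$ closes the step with $c=2b/(a-1)$. The first genuine gap is the base case for $a>1$, which you flag as the main obstacle but never actually resolve. The clean fix is the one you almost had: for $k=\lfloor a\rfloor$ the coefficient $1-a/\lfloor a\rfloor\le 0$, so the recursion gives $A_{\lfloor a\rfloor+1}\le b/\lfloor a\rfloor^2$ — you then discard the denominator and keep only $A_{\lfloor a\rfloor+1}\le b$, which is why you are led to the (indeed false in general) requirement $(a-1)(\lfloor a\rfloor+1)\le 2$. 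Keeping the $\lfloor a\rfloor^2$, the needed inequality is $b/\lfloor a\rfloor^2\le 2b/\bigl((a-1)(\lfloor a\rfloor+1)\bigr)$, i.e.\ $(a-1)(\lfloor a\rfloor+1)\le 2\lfloor a\rfloor^2$, which always holds since $a-1\le\lfloor a\rfloor$ and $\lfloor a\rfloor+1\le 2\lfloor a\rfloor$; this is exactly how the paper's proof handles the base case, and your closing remarks about ``$A_{k+1}\le\max(A_k,b)$ roughly'' and ``one more careful pass'' do not substitute for it.

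The second gap is the $a\le 1$ branch, which you wave through. By your own (correct) reduction, the inductive step needs $(k+1)b\le c\bigl((a-1)k+a\bigr)$; for $a\le 1$ the right-hand side is at most $ca$ because $(a-1)k\le 0$, while the left-hand side grows linearly in $k$, so no constant $c$ — in particular not $c\ge b/a$ — makes the step close for all $k$. The ``slack coming from $c\ge b/a$'' you invoke does not exist, and $c\ge A_1$ only helps at $k=1$. Note that your algebra diverges from the paper's proof at exactly this point: the paper's display simplifies the step to $\frac{1}{k^2}\bigl(b-(a+\frac{k}{k+1})c\bigr)$, whereas combining $\frac{c}{k}-\frac{c}{k+1}=\frac{c}{k(k+1)}$ with the correct sign yields $\frac{1}{k^2}\bigl(b-(a-\frac{k}{k+1})c\bigr)$, and with that sign the paper's argument for $a\le 1$ also does not go through. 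Indeed, running the recursion with equality, $a<1$ and $A_1=0$ gives $A_k$ of order $k^{-a}$, so $kA_k\to\infty$ and no bound of the form $A_k\le c/k$ can hold in that regime; so the failure of your inductive step for $a\le 1$ is not a bookkeeping issue you can patch, and only your $a>1$ argument (with the repaired base case) is salvageable.
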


Next we present the convergence results of Algorithm \ref{alg:bsg} for both convex and nonconvex cases. For the convex case, where functions $F$ and $r_i$'s are convex, we establish a sublinear convergence rate in the same order of the SG method. For the nonconvex case, we establish the convergence result in terms of the expected violation of first-order optimality conditions.
\subsection{Convex case}
In this section, we first analyze Algorithm \ref{alg:bsg} for general convex problems and then for strongly convex problems to obtain stronger results.

\begin{theorem}[Ergodic convergence for non-smooth convex case]\label{thm:ns-cvx}
Let $\{\vx^k\}$ be generated from Algorithm \ref{alg:bsg} with $\alpha_i^k=\alpha_k=\frac{\theta}{\sqrt{k}}<\frac{1}{L},\forall i, k$, for some positive constant $\theta<\frac{1}{L}$. Under Assumptions \ref{assump1} through \ref{assump4}, if $F$ and $r_i$'s are all convex, $\vx^*$ is a solution of \eqref{eq:main} and $\sigma=\sup_k\sigma_k<\infty$, then
\begin{equation}\label{eq:ns-cvx-rate}
\EE [\Phi(\tilde{\vx}^K)-\Phi(\vx^*)]\le D\theta\frac{1+\log K}{\sqrt{1+K}}+\frac{\|\vx^*-\vx^1\|^2}{2\theta\sqrt{1+K}},
\end{equation}
where $\tilde{\vx}^K=\frac{\sum_{k=1}^K\alpha_k\vx^{k+1}}{\sum_{k=1}^K\alpha_k}$ and
\begin{equation}\label{eq:defD}D=\frac{s(\sigma^2+4L_{\max}^2)}{1-L\theta}+\sqrt{s}(\|\vx^*\|+\rho)\left(A+L\sqrt{\sum_{j=1}^s(4M_\rho^2+4\sigma_k^2+2L_{r_j}^2)}\right).
\end{equation}
Furthermore, if the maximum number of iterations $K$ is predetermined, then taking $\alpha_k=\frac{\theta}{\sqrt{K}},\forall k$, we have
\begin{equation}\label{eq:betterrate}\EE[\Phi(\tilde{\vx}^K)-\Phi(\vx^*)]\le\frac{D\theta}{\sqrt{K}}+\frac{\|\vx^*-\vx^1\|^2}{2\theta \sqrt{K}}.
\end{equation}
\end{theorem}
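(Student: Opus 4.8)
The plan is to follow the standard SG-style analysis, but carefully track the extra error terms coming from the biased Gauss-Seidel gradients. First I would analyze one block update. Fix iteration $k$ and block $i$. From the optimality condition of \eqref{eq:update} or \eqref{eq:update2}, together with convexity of $r_{\pi_i^k}$, I would derive a per-block inequality comparing $\vx_i^{k+1}$ to the optimal $\vx_i^*$: something of the form
\begin{equation*}
\langle \tilde{\vg}_i^k, \vx_i^{k+1}-\vx_i^*\rangle + r_i(\vx_i^{k+1}) - r_i(\vx_i^*) \le \frac{1}{2\alpha_k}\big(\|\vx_i^k-\vx_i^*\|^2 - \|\vx_i^{k+1}-\vx_i^*\|^2 - \|\vx_i^{k+1}-\vx_i^k\|^2\big),
\end{equation*}
with an extra $L_{r_i}\|\vx_i^{k+1}-\vx_i^k\|$-type term for the $i\in\cI_2$ case where a subgradient of $r_i$ is linearized. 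Then I would split $\tilde{\vg}_i^k = \vg_i^k + \bm{\delta}_i^k$, use convexity of $F$ block-wise so that $\langle \vg_i^k, \vx_i^{k+1}-\vx_i^*\rangle$ controls the progress in $F$ along block $i$ (evaluated at the partially-updated point $(\vx_{<i}^{k+1},\vx_{\ge i}^k)$), and absorb the cross terms between successive partially-updated points using the Lipschitz bound \eqref{eq:lipobj}. Summing over $i=1,\dots,s$ telescopes the $F$-differences into $F(\vx^k) - F(\vx^{k+1})$ plus a remainder of order $L\sum_i \|\vx_i^{k+1}-\vx_i^k\|^2$, which the quadratic terms $-\frac{1}{2\alpha_k}\|\vx_i^{k+1}-\vx_i^k\|^2$ can dominate precisely because $\alpha_k < 1/L$ (this is where the stepsize restriction $\theta < 1/L$ is used).

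Next I would take expectations. The error terms fall into two categories. The inner products $\langle \bm{\delta}_i^k, \vx_i^*-\vx_i^k\rangle$: since $\vx_i^k$ and $\vx_i^*$ are independent of $\bm{\delta}_i^k$ conditional on $\bm{\Xi}_{[k-1]}$, Lemma \ref{lem:indp} bounds $\EE\langle \bm{\delta}_i^k, \vx_i^*-\vx_i^k\rangle \le A\alpha_k\,\EE\|\vx_i^*-\vx_i^k\| \le A\alpha_k(\|\vx^*\|+\rho)$ using Assumption \ref{assump3}. The inner products $\langle \bm{\delta}_i^k, \vx_i^{k+1}-\vx_i^k\rangle$: here $\vx_i^{k+1}$ does depend on $\bm{\delta}_i^k$, so I would instead bound $|\langle \bm{\delta}_i^k, \vx_i^{k+1}-\vx_i^k\rangle| = \alpha_k |\langle \bm{\delta}_i^k, \tilde{\vh}_i^k\rangle| \le \frac{\alpha_k}{2}\|\bm{\delta}_i^k\|^2 + \frac{\alpha_k}{2}\|\tilde{\vh}_i^k\|^2$ by Young's inequality, then apply \eqref{eq:var} and Lemma \ref{lem:bdh} to get a bound of order $\alpha_k(\sigma_k^2 + M_\rho^2 + L_{r_i}^2)$. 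Similarly the $L_{r_i}\|\vx_i^{k+1}-\vx_i^k\|$ term is handled by Young's inequality and Lemma \ref{lem:bdh}. Also the residual $L\sum_i\|\vx_i^{k+1}-\vx_i^k\|^2$ from the Lipschitz cross-terms is, after using $\|\vx_i^{k+1}-\vx_i^k\| = \alpha_k\|\tilde{\vh}_i^k\|$, bounded by $\alpha_k^2 L \sum_i \EE\|\tilde{\vh}_i^k\|^2$, again controlled by Lemma \ref{lem:bdh}. Collecting everything, I would arrive at a recursion of the form
\begin{equation*}
\EE[\Phi(\vx^{k+1})] - \Phi(\vx^*) \le \frac{1}{2\alpha_k}\EE\|\vx^k-\vx^*\|^2 - \frac{1}{2\alpha_k}\EE\|\vx^{k+1}-\vx^*\|^2 + D\alpha_k^2,
\end{equation*}
where $D$ is exactly the constant in \eqref{eq:defD} (collecting the $s(\sigma^2+4L_{\max}^2)/(1-L\theta)$ piece from the $\tilde{\vh}$ bounds and the $\sqrt{s}(\|\vx^*\|+\rho)(A + L\sqrt{\cdots})$ piece from the Lemma \ref{lem:indp} bound and the $L$-residual).

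Finally I would sum this recursion over $k=1,\dots,K$. Multiplying through by $\alpha_k = \theta/\sqrt{k}$ first (or using that $\alpha_k$ is nonincreasing), the telescoping distance terms collapse with the standard manipulation $\sum_k (\frac{1}{\alpha_k} - \frac{1}{\alpha_{k-1}})\EE\|\vx^k-\vx^*\|^2$ — but since Assumption \ref{assump3} only gives $\EE\|\vx^k\|^2 \le \rho^2$, I would rather weight by $\alpha_k$ and use the convexity bound $\Phi(\tilde\vx^K) \le \frac{\sum_k \alpha_k \Phi(\vx^{k+1})}{\sum_k \alpha_k}$ via Jensen, so that $\EE[\Phi(\tilde\vx^K) - \Phi(\vx^*)] \le \frac{\sum_k \alpha_k(\cdots)}{\sum_k \alpha_k}$. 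The distance terms then give $\frac{1}{2}\sum_k(\|\vx^k-\vx^*\|^2 - \|\vx^{k+1}-\vx^*\|^2) \le \frac{1}{2}\|\vx^1-\vx^*\|^2$ (using $\alpha_k$ constant within the sum after the reweighting), and the error terms give $D\sum_k \alpha_k^2 = D\theta^2\sum_k \frac1k \le D\theta^2(1+\log K)$. Dividing by $\sum_{k=1}^K \alpha_k = \theta\sum_k k^{-1/2} \ge \theta\sqrt{1+K}$ (a standard integral estimate) yields \eqref{eq:ns-cvx-rate}. For the fixed-horizon variant, taking $\alpha_k \equiv \theta/\sqrt{K}$ makes $\sum_k \alpha_k = \theta\sqrt{K}$ and $\sum_k \alpha_k^2 = \theta^2$, giving \eqref{eq:betterrate} directly.

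The main obstacle I expect is the term $\langle \bm{\delta}_i^k, \vx_i^{k+1}-\vx_i^k\rangle$: because the biasedness breaks the usual trick of conditioning so that this term vanishes in expectation, one must instead pay an extra $O(\alpha_k^2)$ cost via Young's inequality, and it is precisely keeping this cost at the right order — and ensuring the $-\frac{1}{2\alpha_k}\|\vx_i^{k+1}-\vx_i^k\|^2$ terms survive to absorb both the Lipschitz residual and part of these Young splits, which forces $\alpha_k < 1/L$ — that requires care. A secondary subtlety is that $\vg_i^k$ itself depends on $\Xi_k$ (through $\vx_{<i}^{k+1}$), so the block-wise convexity inequality for $F$ must be applied pathwise before taking expectations, and the telescoping of the $F$-differences must be done pathwise as well.
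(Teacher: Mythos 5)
Your overall scaffolding (per-block optimality inequality from \eqref{eq:update}--\eqref{eq:update2}, the extra $L_{r_i}\|\vx_i^{k+1}-\vx_i^k\|$ term for $i\in\cI_2$, Young's inequality to absorb $\langle \bm{\delta}_i^k,\vx_i^{k+1}-\vx_i^k\rangle$ and the Lipschitz residual into the $-\frac{1}{2\alpha_k}\|\vx_i^{k+1}-\vx_i^k\|^2$ terms, Lemma \ref{lem:indp} for the bias paired with $\vx_i^*-\vx_i^k$, the $\alpha_k$-weighted telescoping plus Jensen, and the $\sum_k\alpha_k^2$, $\sum_k\alpha_k$ estimates) matches the paper's proof. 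However, there is a genuine gap in the step where you pass from the per-block inequalities to a bound on the objective gap at $\vx^*$. You invoke ``convexity of $F$ block-wise'' at the partially-updated point, i.e.\ essentially $\langle \vg_i^k,\vx_i^k-\vx_i^*\rangle \ge F(\vx_{<i}^{k+1},\vx_{\ge i}^k)-F(\vx_{<i}^{k+1},\vx_i^*,\vx_{>i}^k)$. Summing this over $i$ leaves you with the mixed-point values $F(\vx_{<i}^{k+1},\vx_i^*,\vx_{>i}^k)$, one per block, and convexity alone gives no way to recombine these into $F(\vx^*)$ (that would require a rectangle/supermodularity-type inequality, e.g.\ for $s=2$ the relation $F(\vx_1^*,\vx_2^k)+F(\vx_1^{k+1},\vx_2^*)\ge F(\vx^*)+F(\vx_1^{k+1},\vx_2^k)$, which fails for general convex $F$). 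So the claimed telescoping against $\Phi(\vx^*)$ does not go through as sketched; this difficulty is exactly why cyclic analyses cannot rely on block-wise convexity alone.

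The paper's route is to use joint convexity at $\vx^k$ (convexity of $\Phi-\sum_{i\in\cI_1}r_i$), i.e.\ compare with $\sum_i\langle\nabla_{\vx_i}F(\vx^k),\vx_i^k-\vx_i^*\rangle$, and then pay for the mismatch between $\nabla_{\vx_i}F(\vx^k)$ and the stale gradient $\vg_i^k$ via the term $\EE\langle \vg_i^k-\nabla_{\vx_i}F(\vx^k),\vx_i^*-\vx_i^k\rangle$, bounded in \eqref{eq:ns-cvx-6} by H\"older's inequality, the Lipschitz bound $\|\vg_i^k-\nabla_{\vx_i}F(\vx^k)\|\le L\|\vx_{<i}^{k+1}-\vx_{<i}^k\|=L\alpha_k\big(\sum_{j<i}\|\tilde{\vh}_j^k\|^2\big)^{1/2}$, Lemma \ref{lem:bdh}, and Assumption \ref{assump3}. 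This term is precisely the origin of the piece $\sqrt{s}(\|\vx^*\|+\rho)\,L\sqrt{\sum_j(4M_\rho^2+4\sigma_k^2+2L_{r_j}^2)}$ in \eqref{eq:defD}; note the factor $\|\vx^*\|+\rho$, which can only arise from an inner product with $\vx_i^*-\vx_i^k$. In your accounting you attribute this piece to the ``$L$-residual'' $L\sum_i\|\vx_i^{k+1}-\vx_i^k\|^2$, but that residual carries no $(\|\vx^*\|+\rho)$ factor and is already absorbed by the $\frac{1}{2\alpha_k}$ quadratic terms, so it cannot produce the second half of $D$. To complete your argument you would need to replace the block-wise convexity step by the joint-convexity-at-$\vx^k$ step and add the staleness term above; with that repair the rest of your outline coincides with the paper's proof.
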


\begin{remark}
The value of $\theta$ usually plays a vital role on the actual speed  of the algorithm. In practice, we may not know the exact value of $L$, and thus it is difficult to choose an appropriate $\theta$. Even if we know or can estimate $L$, different $\theta$'s can make the algorithm perform very differently. This phenomenon has also been observed for the SG method; see numerical tests in section \ref{sec:numerical} and also the discussion on page 6 of \cite{nemirovski2009robust}. The work \cite{schaul2013no} and its references study adaptive learning rates, which is beyond the discussion of this paper.
\end{remark}

\begin{remark}\label{rm:cvx-rst}
The proof below will clarify that  the last (long) term in \eqref{eq:defD} is a result of the biased stochastic partial gradient (see \eqref{eq:ns-cvx-6-1} and \eqref{eq:ns-cvx-6} below). If unbiased, that is, $\EE[\bm{\delta}_i^k\,|\,\bm{\Xi}_{(k-1)}]=\vzero$ holds instead of \eqref{eq:mean}, we will have the improved $D=\frac{s(\sigma^2+4L_{\max}^2)}{1-L\theta}$ instead of \eqref{eq:defD}, and then the result in \eqref{eq:betterrate} becomes comparable 
to that of SG  since $\sigma^2$ is a bound of the \emph{block partial} stochastic gradient and thus $s\sigma^2$ a bound of the stochastic gradient of $F$.

Although the existence of the second term in \eqref{eq:defD} makes the result in \eqref{eq:betterrate} seemingly worse than that of SG, multi-block $(s>1)$ Gauss-Seidel-type updates are generally more effective. Note that BSG has a per-iteration cost similar to that of SG. To update the first block, computing the sample partial gradient requires reading the current values of all the blocks. The subsequent updates in BSG are much cheaper because the sample partial gradients can be updated from the ones already computed. In addition, BSG can take greater stepsizes than SG (because $L$ in \eqref{eq:lipobj} is the Lipschitz constant of partial gradients). Therefore, BSG can perform much better than SG as shown by numerical results in section \ref{sec:numerical}.

Minimizing the right-hand side of \eqref{eq:betterrate} by setting $\theta=\frac{\|\vx^*-\vx^1\|}{\sqrt{2D}}$, we have
\begin{equation}\label{eq:optrate}
\EE[\Phi(\tilde{\vx}^K)-\Phi(\vx^*)]\le\frac{\sqrt{2D}\|\vx^*-\vx^1\|}{\sqrt{K}}.
\end{equation}
 Note that if $\EE[\bm{\delta}_i^k\,|\,\bm{\Xi}_{(k-1)}]=\vzero$, the complexity in \eqref{eq:optrate} becomes just a fraction $\cO(\frac{1}{\sqrt{s}})$ of \cite[(3.22)]{dang2013stochastic} since the quantity $s\sigma^2$ in \eqref{eq:defD} equals $\sigma^2$ in \cite[(3.22)]{dang2013stochastic}, and $L_{\max}$ disappears from $D$ under the settings of \cite{dang2013stochastic}. Since the SBMD method of \cite{dang2013stochastic} only performs one block coordinate update at each iteration, its overall complexity  is as good as ours. But, once again BSG updates all the blocks while SBMD updates just a random one. Computationally, the cost of computing the sample partial gradients for all the blocks in BSG is dominated by the first   one, which involves the current values of all the blocks and which is also needed  by SBMD.  Therefore, at each iteration, BSG and SBMD spend the same cost to update a block, but  BSG then updates  the rest of the blocks at very little extra cost. Therefore, BSG can have better overall performance (see numerical results in section \ref{sec:logreg}).
\end{remark}

\begin{proof} From the Lipschitz continuity of $\nabla_{\vx_i}F(\vx_{<i}^{k+1},\vx_i,\vx_{>i}^k)$ about $\vx_i$, it holds for any $i\in\cI_1$ that (see \cite{NesterovConvexBook2004} for example)
\begin{align}\label{eq:ns-cvx-1}
&\Phi(\vx_{\le i}^{k+1},\vx_{>i}^k)-\Phi(\vx_{<i}^{k+1},\vx_{\ge i}^k)\cr
\le & \langle \vg_i^k,\vx_i^{k+1}-\vx_i^k\rangle +\frac{L}{2}\|\vx_i^{k+1}-\vx_i^k\|^2+r_i(\vx_i^{k+1})-r_i(\vx_i^k).
\end{align}
In addition, from Lemma 2 of \cite{lan2012optimal}, it holds for $i\in\cI_2$ that
\begin{align}\label{eq:ns-cvx-1-2}
&\Phi(\vx_{\le i}^{k+1},\vx_{>i}^k)-\Phi(\vx_{<i}^{k+1},\vx_{\ge i}^k)\cr
\le & \langle \vg_i^k+\tilde{\nabla} r_i(\vx_i^k),\vx_i^{k+1}-\vx_i^k\rangle +\frac{L}{2}\|\vx_i^{k+1}-\vx_i^k\|^2+2L_{r_i}\|\vx_i^{k+1}-\vx_i^k\|.
\end{align}
By Lemma 2 of \cite{BeckTeboulle2009}, the updates in \eqref{eq:update} and \eqref{eq:update2} indicate that for any $\vx_i\in\cX_i$, if $i\in\cI_1$, then
\begin{align}\label{eq:ns-cvx-2}
&\langle \tilde{\vg}_i^k, \vx_i^{k+1}-\vx_i^k\rangle+r_i(\vx_i^{k+1})+\frac{1}{2\alpha_k}\|\vx_i^{k+1}-\vx_i^k\|^2\cr
\le & \langle\tilde{\vg}_i^k, \vx_i-\vx_i^k\rangle+r_i(\vx_i)+\frac{1}{2\alpha_k}\big(\|\vx_i-\vx_i^k\|^2-\|\vx_i-\vx_i^{k+1}\|^2\big),
\end{align}
and if $i\in\cI_2$, then
\begin{align}\label{eq:ns-cvx-2-2}
&\langle \tilde{\vg}_i^k+\tilde{\nabla} r_i(\vx_i^k), \vx_i^{k+1}-\vx_i^k\rangle+\frac{1}{2\alpha_k}\|\vx_i^{k+1}-\vx_i^k\|^2\cr
\le & \langle\tilde{\vg}_i^k+\tilde{\nabla} r_i(\vx_i^k), \vx_i-\vx_i^k\rangle+\frac{1}{2\alpha_k}\big(\|\vx_i-\vx_i^k\|^2-\|\vx_i-\vx_i^{k+1}\|^2\big),
\end{align}
Summing up \eqref{eq:ns-cvx-1} through \eqref{eq:ns-cvx-2-2} over $i$ and arranging terms, we have
\begin{align}\label{eq:ns-cvx-3}
\Phi(\vx^{k+1})-\Phi(\vx^k)
\le &\sum_{i=1}^s\langle \vg_i^k-\tilde{\vg}_i^k, \vx_i^{k+1}-\vx_i^k\rangle-\big(\frac{1}{2\alpha_k}-\frac{L}{2}\big)\|\vx^{k+1}-\vx^k\|^2\\
&+\frac{1}{2\alpha_k}\big(\|\vx-\vx^k\|^2-\|\vx-\vx^{k+1}\|^2\big)
+\sum_{i\in\cI_1}\left(\langle\tilde{\vg}_i^k, \vx_i-\vx_i^k\rangle+r_i(\vx_i)-r_i(\vx^k_i)\right)\cr
& + \sum_{i\in\cI_2}\left(\langle\tilde{\vg}_i^k+\tilde{\nabla} r_i(\vx_i^k), \vx_i-\vx_i^k\rangle+2L_{r_i}\|\vx_i^{k+1}-\vx_i^k\|\right)\nonumber.
\end{align}
From the convexity of $F$ and $r_i$'s, we have $\Phi-\sum_{i\in \cI_1}r_i$ to be convex and thus
$$\Phi(\vx^k)-\sum_{i\in\cI_1}r_i(\vx^k_i)\le \Phi(\vx)-\sum_{i\in\cI_1}r_i(\vx_i)+\langle \nabla F(\vx^k),\vx^k-\vx\rangle+\sum_{i\in\cI_2}\langle\tilde{\nabla} r_i(\vx_i^k),\vx_i^k-\vx_i\rangle,$$
which together with \eqref{eq:ns-cvx-3} gives
\begin{align}\label{eq:ns-cvx-4}
\Phi(\vx^{k+1})-\Phi(\vx)
\le &\sum_{i=1}^s\langle \vg_i^k-\tilde{\vg}_i^k, \vx_i^{k+1}-\vx_i^k\rangle-\big(\frac{1}{2\alpha_k}-\frac{L}{2}\big)\|\vx^{k+1}-\vx^k\|^2+\sum_{i\in\cI_2}2L_{r_i}\|\vx_i^{k+1}-\vx_i^k\|\cr
& +\sum_{i=1}^s\langle\tilde{\vg}_i^k-\nabla_{\vx_i}F(\vx^k), \vx_i-\vx_i^k\rangle +\frac{1}{2\alpha_k}\big(\|\vx-\vx^k\|^2-\|\vx-\vx^{k+1}\|^2\big)\nonumber\\
(\text{note }\tilde{\vg}_i^k=\vg_i^k+\bm{\delta}_i^k)\quad= &\sum_{i=1}^s\langle -\bm{\delta}_i^k, \vx_i^{k+1}-\vx_i^k\rangle-\big(\frac{1}{2\alpha_k}-\frac{L}{2}\big)\|\vx^{k+1}-\vx^k\|^2+\sum_{i\in\cI_2}2L_{r_i}\|\vx_i^{k+1}-\vx_i^k\|\\
& +\sum_{i=1}^s\langle\bm{\delta}_i^k+{\vg}_i^k-\nabla_{\vx_i}F(\vx^k), \vx_i-\vx_i^k\rangle +\frac{1}{2\alpha_k}\big(\|\vx-\vx^k\|^2-\|\vx-\vx^{k+1}\|^2\big)\nonumber.
\end{align}
Noting $\frac{1}{\alpha_k}-L>0$ and using Young's inequality $ab< ta^2+\frac{1}{4t}b^2$ in the following two inequalities, we have
\begin{align}\label{eq:ns-cvx-5}
&\sum_{i=1}^s\langle -\bm{\delta}_i^k, \vx_i^{k+1}-\vx_i^k\rangle-\big(\frac{1}{2\alpha_k}-\frac{L}{2}\big)\|\vx^{k+1}-\vx^k\|^2+\sum_{i\in\cI_2}2L_{r_i}\|\vx_i^{k+1}-\vx_i^k\|\cr
\le & \sum_{i=1}^s\left(\langle -\bm{\delta}_i^k, \vx_i^{k+1}-\vx_i^k\rangle-\frac{1}{4}\big(\frac{1}{\alpha_k}-L\big)\|\vx^{k+1}_i-\vx^k_i\|^2\right)\cr
&+\sum_{i\in\cI_2}\left(2L_{r_i}\|\vx_i^{k+1}-\vx_i^k\|-\frac{1}{4}\big(\frac{1}{\alpha_k}-L\big)\|\vx^{k+1}_i-\vx^k_i\|^2\right)\cr
\le & \sum_{i=1}^s\frac{\alpha_k}{1-L\alpha_k}\|\bm{\delta}_i^k\|^2+\sum_{i\in\cI_2}\frac{4L_{r_i}^2\alpha_k}{1-L\alpha_k}.
\end{align}
In addition, letting $\vu^k=\vx_i-\vx_i^k$ in Lemma \ref{lem:indp}, we have
\begin{equation}\label{eq:ns-cvx-6-1}
\EE\langle \bm{\delta}_i^k, \vx_i-\vx_i^k\rangle
\le A\alpha_k\EE\|\vx_i-\vx_i^k\|,
\end{equation}
and also by H{\"{o}}lder's inequality, it holds that
\begin{align}\label{eq:ns-cvx-6}
\EE \langle{\vg}_i^k-\nabla_{\vx_i}F(\vx^k), \vx_i-\vx_i^k\rangle\le&\sqrt{\EE\|\vg_i^k-\nabla_{\vx_i}F(\vx^k)\|^2}\cdot\sqrt{\EE\|\vx_i-\vx_i^k\|^2}\cr
\le & L\alpha_k\sqrt{\EE\sum_{j=1}^s\|\tilde{\vh}_j^k\|^2}\cdot\sqrt{\EE\|\vx_i-\vx_i^k\|^2}\cr
\le &  L\alpha_k\sqrt{\sum_{j=1}^s(4M_\rho^2+4\sigma_k^2+2L_{r_j}^2)}\cdot\sqrt{\EE\|\vx_i-\vx_i^k\|^2},
\end{align}
where we have used Lemma \ref{lem:bdh} in the last inequality.

Taking expectation over both sides of \eqref{eq:ns-cvx-4},
substituting \eqref{eq:var}, \eqref{eq:ns-cvx-5}, \eqref{eq:ns-cvx-6-1}, and \eqref{eq:ns-cvx-6} into it, and noting $$\sum_{i=1}^s\sqrt{\EE\|\vx_i-\vx_i^k\|^2}\le \sqrt{s}\sqrt{\EE\|\vx-\vx^k\|^2}\le\sqrt{s}(\|\vx\|+\rho),$$ we have after some arrangement that
\begin{align}\label{eq:ns-cvx-7}
& \alpha_k\EE[\Phi(\vx^{k+1})-\Phi(\vx)]\cr
\le & \left(\frac{s(\sigma^2+4L_{\max}^2)}{1-L\alpha_k}+\sqrt{s}(\|\vx\|+\rho)\left(A+L\sqrt{\sum_{j=1}^s(4M_\rho^2+4\sigma_k^2+2L_{r_j}^2)}\right)\right)\alpha_k^2\\
&+\frac{1}{2}\EE\big(\|\vx-\vx^k\|^2-\|\vx-\vx^{k+1}\|^2\big)\nonumber.
\end{align}
Letting $\vx=\vx^*$ in the above inequality and taking  its sum over $k$, we have
\begin{align}\label{eq:ns-cvx-8}
\sum_{k=1}^K\alpha_k\EE[\Phi(\vx^{k+1})-\Phi(\vx^*)]
\le D \sum_{k=1}^K\alpha_k^2+ \frac{1}{2}\|\vx^*-\vx^1\|^2.
\end{align}
Now use the convexity of $\Phi$ and $\alpha_k=\frac{\theta}{\sqrt{k}}$ in \eqref{eq:ns-cvx-8} to get
\begin{align}\label{eq:ns-cvx-9}
\EE [\Phi(\tilde{\vx}^K)-\Phi(\vx^*)]\le & D \frac{\sum_{k=1}^K\alpha_k^2}{\sum_{k=1}^K\alpha_k}+ \frac{\|\vx^*-\vx^1\|^2}{2\sum_{k=1}^K\alpha_k}\\
\le & D\theta\frac{1+\log K}{\sqrt{1+K}}+\frac{\|\vx^*-\vx^1\|^2}{2\theta\sqrt{1+K}}.\nonumber
\end{align}
If the maximum number of iterations $K$ is predetermined, set $\alpha_k\equiv\frac{\theta}{\sqrt{K}}$ in \eqref{eq:ns-cvx-9} to obtain
$$\EE[\Phi(\tilde{\vx}^K)-\Phi(\vx^*)]\le\frac{D\theta}{\sqrt{K}}+\frac{\|\vx^*-\vx^1\|^2}{2\theta \sqrt{K}},$$
which completes the proof.
\end{proof}

Under the general convex setting, the rate  $O(1/\sqrt{k})$ is optimal for the SG method, and for strongly convex case, the rate $O(1/k)$ is optimal; see \cite{nemirovskici1983problem, nemirovski2009robust, lan2012optimal}. In the following theorem, we assume $\Phi(\vx)$ to be strongly convex and establish the rate $O(1/k)$. Hence, our algorithm  has the same orders of convergence rates as that of the SG method.

\begin{theorem}[Non-smooth strongly convex case]\label{thm:ns-strcvx}
Let $\{\vx^k\}$ be generated from Algorithm \ref{alg:bsg} with $\alpha_i^k=\alpha_k=\frac{\theta}{k}<\frac{1}{L},\forall i, k$. Under Assumptions \ref{assump1} through \ref{assump4}, if $F$ and $r_i$'s are convex, $\Phi$ is strongly convex with modulus $\mu>0$, namely,
$$\Phi(\lambda\vx+(1-\lambda)\vy)\le \lambda\Phi(\vx)+(1-\lambda)\Phi(\vy)-\frac{\mu}{2}\lambda(1-\lambda)\|\vx-\vy\|^2,\ \forall \vx,\vy, \forall \lambda\in[0,1],$$ and $\sigma=\sup_k\sigma_k<\infty$, then
\begin{equation}\label{eq:ns-strcvx-rate}
\EE [\|\vx^k-\vx^*\|^2] \le \frac{1}{k}\max\left\{\frac{2D\theta(1+\mu\theta)}{\mu},\|\vx^1-\vx^*\|^2\right\},
\end{equation}
where $D$ is defined in \eqref{eq:defD}.
\end{theorem}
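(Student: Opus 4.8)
The plan is to re-use the master estimate \eqref{eq:ns-cvx-7} that was derived in the proof of Theorem~\ref{thm:ns-cvx}; note that \eqref{eq:ns-cvx-7} was obtained for an \emph{arbitrary} feasible $\vx$ and an arbitrary stepsize sequence satisfying $\alpha_k<1/L$, before the choice $\alpha_k=\theta/\sqrt{k}$ was made, so it is available here verbatim under Assumptions~\ref{assump1}--\ref{assump4} and convexity of $F$ and the $r_i$'s. First I would set $\vx=\vx^*$ in \eqref{eq:ns-cvx-7}. Since $\alpha_k=\theta/k\le\theta<1/L$ and $\sigma_k\le\sigma$ for all $k$, the bracketed quantity multiplying $\alpha_k^2$ on the right-hand side of \eqref{eq:ns-cvx-7} is dominated by the constant $D$ of \eqref{eq:defD}, so
\[
\alpha_k\,\EE[\Phi(\vx^{k+1})-\Phi(\vx^*)]\le D\alpha_k^2+\tfrac12\EE\big(\|\vx^*-\vx^k\|^2-\|\vx^*-\vx^{k+1}\|^2\big).
\]

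Next I would bring in strong convexity. Because $\vx^*$ minimizes $\Phi$ over the feasible set and $\vx^{k+1}$ is feasible, the modulus-$\mu$ inequality stated in the theorem gives $\Phi(\vx^{k+1})-\Phi(\vx^*)\ge\frac{\mu}{2}\|\vx^{k+1}-\vx^*\|^2$ (take $\vy=\vx^*$, $\vx=\vx^{k+1}$ and let $\lambda\to0$). Substituting this into the last display and abbreviating $A_k:=\EE\|\vx^k-\vx^*\|^2$, a rearrangement yields
\[
(1+\mu\alpha_k)A_{k+1}\le A_k+2D\alpha_k^2,\qquad\text{equivalently}\qquad A_{k+1}\le\frac{1}{1+\mu\alpha_k}A_k+\frac{2D\alpha_k^2}{1+\mu\alpha_k}.
\]

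Then I would specialize to $\alpha_k=\theta/k$ and cast the recursion in the form required by Lemma~\ref{lem:subrate}. Using the elementary bound $\frac{1}{1+\mu\theta/k}=\frac{k}{k+\mu\theta}\le 1-\frac{\mu\theta}{k(1+\mu\theta)}$, valid for every $k\ge1$, together with $\frac{2D\alpha_k^2}{1+\mu\alpha_k}\le 2D\theta^2/k^2$, the recursion becomes
\[
A_{k+1}\le\Big(1-\frac{a}{k}\Big)A_k+\frac{b}{k^2},\qquad a=\frac{\mu\theta}{1+\mu\theta}\in(0,1),\quad b=2D\theta^2.
\]
Since $a\le1$, Lemma~\ref{lem:subrate} applies with $\lfloor a\rfloor+1=1$ and delivers $A_k\le c/k$ for all $k\ge1$, where $c=\max\{b/a,\,A_1\}$. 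Computing $b/a=\frac{2D\theta^2(1+\mu\theta)}{\mu\theta}=\frac{2D\theta(1+\mu\theta)}{\mu}$ and noting $A_1=\|\vx^1-\vx^*\|^2$ (the starting point is deterministic) recovers exactly the claimed bound \eqref{eq:ns-strcvx-rate}.

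The only mildly delicate points are bookkeeping rather than genuine obstacles: (i) confirming that the $k$-dependent terms $\alpha_k$ and $\sigma_k$ in the coefficient of \eqref{eq:ns-cvx-7} are uniformly dominated so that the single constant $D$ of \eqref{eq:defD} is a valid upper bound, and (ii) verifying the inequality $\frac{k}{k+\mu\theta}\le 1-\frac{\mu\theta}{k(1+\mu\theta)}$ that reshapes the contraction factor into the $1-a/k$ form Lemma~\ref{lem:subrate} needs. All the hard analysis — controlling the \emph{biased} stochastic partial gradients $\bm{\delta}_i^k$ — was already done in deriving \eqref{eq:ns-cvx-7}, so the strongly convex case is essentially a short corollary of the general convex estimate plus the scalar recursion lemma.
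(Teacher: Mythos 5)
Your proof is correct and follows essentially the same route as the paper: plug $\vx=\vx^*$ into \eqref{eq:ns-cvx-7}, use strong convexity to bound the objective gap below by $\frac{\mu}{2}\EE\|\vx^{k+1}-\vx^*\|^2$, obtain the recursion $A_{k+1}\le\frac{A_k}{1+\mu\alpha_k}+\frac{2D\alpha_k^2}{1+\mu\alpha_k}$, reshape it with $\alpha_k=\theta/k$ into the $1-a/k$ form, and invoke Lemma~\ref{lem:subrate} with $a=\frac{\mu\theta}{1+\mu\theta}$ and $b=2D\theta^2$. Your write-up merely makes explicit a few steps the paper leaves implicit (the domination of the $k$-dependent coefficient by $D$ and the elementary inequality $\frac{k}{k+\mu\theta}\le 1-\frac{\mu\theta}{k(1+\mu\theta)}$), so no gap remains.
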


\begin{proof}
When $\Phi$ is strongly convex with modulus $\mu>0$, it holds that
$$\Phi(\vx^{k+1})-\Phi(\vx^*)\ge\frac{\mu}{2}\|\vx^{k+1}-\vx^*\|^2,$$
which together with \eqref{eq:ns-cvx-7} implies
\begin{align}\label{eq:ns-scvx}
\EE\|\vx^{k+1}-\vx^*\|^2\le & \frac{\EE\|\vx^k-\vx^*\|^2}{1+\mu\alpha_k}+2D\frac{\alpha_k^2}{1+\mu\alpha_k}\cr
= & \frac{\EE\|\vx^k-\vx^*\|^2}{1+\mu\theta/k}+\frac{2D\theta^2}{k^2(1+\mu\theta/k)}\cr
\le & \left(1-\frac{\mu\theta}{k(1+\mu\theta)}\right)\EE\|\vx^k-\vx^*\|^2+\frac{2D\theta^2}{k^2}.
\end{align}
Using Lemma \ref{lem:subrate}, we immediately get the desired result and complete the proof.
\end{proof}

If the error $\sigma_k$ decreases fast, we can show an almost linear convergence result. We assume that either $\cX_i=\RR^{n_i}$ or $r_i=0$ for all $i$ since it is impossible in general to get linear convergence for subgradient method (see \cite{goffin1977convergence, boyd2003subgradient, nesterov2014convergent} and the references therein). For the convenience of our discussion, we let
\begin{equation}\label{eq:newr}
\hat{r}_i(\vx_i)=\left\{
\begin{array}{l}r_i(\vx_i),\ i\in\cI_1,\\[0.1cm]
\iota_{\cX_i}(\vx_i),\ i\in\cI_2.
\end{array}\right.
\end{equation}
This way, we can write the updates \eqref{eq:update} and \eqref{eq:update2} uniformly into \begin{equation}\label{eq:update3}
\vx_i^{k+1}=\argmin_{\vx_i}\langle \tilde{\vg}_i^k,\vx_i-\vx_i^k\rangle +\frac{1}{2\alpha_i^k}\|\vx_i-\vx_i^k\|^2+\hat{r}_i(\vx_i).
\end{equation}
Let
\begin{equation}\label{eq:newphi}\hat{\Phi}(\vx)=F(\vx)+\sum_{i=1}^s\hat{r}_i(\vx_i).
\end{equation}
Then problem \eqref{eq:main} becomes
$\min_\vx \hat{\Phi}(\vx)$
under the assumption that either $\cX_i=\RR^{n_i}$ or $r_i=0$ for all $i$. When $\Phi$ and thus $\hat{\Phi}$ is strongly convex  with modulus $\mu>0$, then from the discussion in section 2.2 of \cite{xu2013block}, it follows that
\begin{equation}\label{eq:kl}
\hat{\Phi}(\vx)-\hat{\Phi}(\vx^*)\le \frac{1}{\mu}\|\vg\|^2, \forall \vg\in\partial \hat{\Phi}(\vx).
\end{equation}
Using this result, we establish the following theorem.

\begin{theorem}\label{thm:strg-g}
Let $\{\vx^k\}$ be generated from Algorithm \ref{alg:bsg} with $\alpha_i^k=\alpha_k<\frac{2}{L},~\forall i, k$. Under Assumptions \ref{assump1} and \ref{assump2}, if either $\cX_i=\RR^{n_i}$ or $r_i=0$ for all $i$, $\Phi$ is strongly convex with constant $\mu$, and all $\hat{r}_i$'s in \eqref{eq:newr} are convex, then
\begin{equation}\label{eq:dec5}
\EE [\Phi(\vx^{k+1})-\Phi(\vx^*)]\le \frac{\gamma(\alpha_k)}{1+\gamma(\alpha_k)}\EE [\Phi(\vx^{k})-\Phi(\vx^*)]+\frac{s\nu(\alpha_k)}{1+\gamma(\alpha_k)}\sigma_k^2,
\end{equation}
where
\begin{equation}\label{eq:gamnu}\gamma(\alpha_k) = \frac{3L}{\mu^2}\big(\frac{1}{\alpha_k^2}+sL^2\big)\frac{1}{\frac{1}{2\alpha_k}-\frac{L}{4}},\quad \nu(\alpha_k)=\frac{\gamma(\alpha_k)\alpha_k}{2-L\alpha_k}+\frac{3L}{\mu^2}.
\end{equation}
\end{theorem}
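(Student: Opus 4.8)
The plan is to obtain a one-step recursion for $\EE[\Phi(\vx^{k+1})-\Phi(\vx^*)]$ by combining a sufficient-decrease estimate for the per-block proximal-gradient updates with the error bound \eqref{eq:kl}. First I would rewrite the updates uniformly as \eqref{eq:update3} using $\hat r_i$, and apply the standard descent lemma for each block: from the Lipschitz continuity of $\nabla_{\vx_i}F$ (Assumption \ref{assump2}) and the optimality of $\vx_i^{k+1}$ in \eqref{eq:update3}, one gets, after summing over $i=1,\ldots,s$ and telescoping the Gauss--Seidel increments,
$$\hat\Phi(\vx^{k+1})\le \hat\Phi(\vx^k)-\Big(\frac{1}{2\alpha_k}-\frac{L}{4}\Big)\|\vx^{k+1}-\vx^k\|^2+\sum_{i=1}^s\langle -\bm\delta_i^k,\vx_i^{k+1}-\vx_i^k\rangle + (\text{cross terms from }\vg_i^k\neq\nabla_{\vx_i}F(\vx^k)),$$
where the condition $\alpha_k<\tfrac2L$ guarantees the quadratic coefficient is positive. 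Taking expectations, the $\bm\delta_i^k$ inner products are split by Young's inequality into a term absorbed by $\|\vx^{k+1}-\vx^k\|^2$ and a variance term controlled by $\sigma_k^2$ via \eqref{eq:var}; the conditional-mean part \eqref{eq:mean} contributes an $O(\alpha_k)$ piece handled by Lemma \ref{lem:indp}.

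Next, the key move is to lower-bound the decrease $\|\vx^{k+1}-\vx^k\|^2$ in terms of the objective gap. Since $\vx_i^{k+1}$ minimizes \eqref{eq:update3}, the optimality condition gives a subgradient $\vg\in\partial\hat\Phi(\vx^{k+1})$ of the form $\vg_i = \nabla_{\vx_i}F(\vx^{k+1})-\tilde{\vg}_i^k-\tfrac1{\alpha_i^k}(\vx_i^{k+1}-\vx_i^k)$, so $\|\vg\|$ is bounded by $\tfrac1{\alpha_k}\|\vx^{k+1}-\vx^k\|$ plus Lipschitz-type terms ($\|\nabla_{\vx_i}F(\vx^{k+1})-\vg_i^k\|\le L\|\vx^{k+1}-\vx^k\|$ coordinatewise, using the Gauss--Seidel structure) plus $\|\bm\delta_i^k\|$. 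Feeding this into the error bound \eqref{eq:kl}, $\hat\Phi(\vx^{k+1})-\hat\Phi(\vx^*)\le\tfrac1\mu\|\vg\|^2$, yields an inequality of the shape $\EE[\Phi(\vx^{k+1})-\Phi(\vx^*)]\le C(\alpha_k)\,\EE\|\vx^{k+1}-\vx^k\|^2 + (\text{const})\,s\sigma_k^2$ with $C(\alpha_k)=\tfrac{3L}{\mu^2}(\tfrac1{\alpha_k^2}+sL^2)$ — the factor $3$ coming from splitting $\|\vg\|^2$ into three groups of terms.

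Finally I would eliminate $\EE\|\vx^{k+1}-\vx^k\|^2$ between the two inequalities. From the first, $\big(\tfrac1{2\alpha_k}-\tfrac L4\big)\EE\|\vx^{k+1}-\vx^k\|^2\le \EE[\hat\Phi(\vx^k)-\hat\Phi(\vx^*)]-\EE[\hat\Phi(\vx^{k+1})-\hat\Phi(\vx^*)] + (\text{variance}) $; substituting the resulting bound on $\EE\|\vx^{k+1}-\vx^k\|^2$ into the error-bound inequality produces exactly $(1+\gamma(\alpha_k))\,\EE[\Phi(\vx^{k+1})-\Phi(\vx^*)]\le\gamma(\alpha_k)\,\EE[\Phi(\vx^k)-\Phi(\vx^*)]+s\nu(\alpha_k)\sigma_k^2$ with $\gamma,\nu$ as in \eqref{eq:gamnu}, after identifying $\gamma(\alpha_k)=C(\alpha_k)/(\tfrac1{2\alpha_k}-\tfrac L4)$. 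Dividing by $1+\gamma(\alpha_k)$ gives \eqref{eq:dec5}. The main obstacle I anticipate is the bookkeeping in the first step: because the updates are Gauss--Seidel, $\vg_i^k=\nabla_{\vx_i}F(\vx_{<i}^{k+1},\vx_{\ge i}^k)$ rather than $\nabla_{\vx_i}F(\vx^k)$, so one must carefully track the discrepancy $\|\vg_i^k-\nabla_{\vx_i}F(\vx^k)\|\le L\|\vx_{<i}^{k+1}-\vx_{<i}^k\|$ and make sure all such cross terms are either absorbed into the quadratic $\|\vx^{k+1}-\vx^k\|^2$ (forcing the weakened coefficient $\tfrac1{2\alpha_k}-\tfrac L4$ and hence the stepsize restriction $\alpha_k<\tfrac2L$) or collected into the $s\sigma_k^2$ term via \eqref{eq:var}; getting the constants to line up so that precisely the stated $\gamma$ and $\nu$ emerge is the delicate part, but it is routine once the splitting is organized.
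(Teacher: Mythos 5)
Your proposal is correct and follows essentially the same route as the paper's proof: a one-step decrease estimate from the block updates plus Young's inequality (the paper's \eqref{eq:dec3}), the error bound \eqref{eq:kl} applied to the residual $\frac{1}{\alpha_k}(\vx_i^k-\vx_i^{k+1})+\nabla_{\vx_i}F(\vx^{k+1})-\tilde{\vg}_i^k\in\nabla_{\vx_i}F(\vx^{k+1})+\partial\hat{r}_i(\vx_i^{k+1})$ split into three pieces with the Gauss--Seidel discrepancy bounded by $L\|\vx^{k+1}-\vx^k\|$, and then elimination of $\EE\|\vx^{k+1}-\vx^k\|^2$ between the two inequalities followed by division by $1+\gamma(\alpha_k)$. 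The only bookkeeping differences are that your direct use of \eqref{eq:kl} yields the factor $3/\mu$, whereas the stated $\gamma,\nu$ carry $3L/\mu^2$, which the paper obtains through the extra inflation $\hat{\Phi}(\vx^{k+1})-\hat{\Phi}(\vx^*)\le\frac{L}{2}\|\vx^{k+1}-\vx^*\|^2\le\frac{L}{\mu}\big(\hat{\Phi}(\vx^{k+1})-\hat{\Phi}(\vx^*)\big)$, and that neither Lemma \ref{lem:indp} nor the bias bound \eqref{eq:mean} is actually needed in this argument, since Young's inequality absorbs the $\bm{\delta}_i^k$ inner products entirely into the quadratic and variance terms.
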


\begin{remark}
This theorem does not require Assumptions \ref{assump3} or \ref{assump4}. Although it requires the strong convexity of $\Phi$, it does not require the convexity of $F$. 
\end{remark}

\begin{proof}
Note
$$\frac{1}{\alpha_k}(\vx_i^k-\vx_i^{k+1})-\tilde{\vg}_i^k\in\partial \hat{r}_i(\vx_i^{k+1}),$$
or equivalently
$$\frac{1}{\alpha_k}(\vx_i^k-\vx_i^{k+1})+\nabla_{\vx_i}F(\vx^{k+1})-\tilde{\vg}_i^k\in\partial \hat{r}_i(\vx_i^{k+1})+\nabla_{\vx_i}F(\vx^{k+1}).$$
Hence, from \eqref{eq:kl} it holds that
\begin{align*}
\hat{\Phi}(\vx^{k+1})-\hat{\Phi}(\vx^*)\le & \frac{1}{\mu}\sum_{i=1}^s\big\|\frac{1}{\alpha_k}(\vx_i^k-\vx_i^{k+1})+\nabla_{\vx_i}F(\vx^{k+1})-\tilde{\vg}_i^k\big\|^2\\
\le & \frac{3}{\mu}\sum_{i=1}^s\left(\big\|\frac{1}{\alpha_k}(\vx_i^k-\vx_i^{k+1})\big\|^2+\big\|\nabla_{\vx_i}F(\vx^{k+1})-\vg_i^k\big\|^2+\|\bm{\delta}_i^k\|^2\right).
\end{align*}
Noting
$\big\|\nabla_{\vx_i}F(\vx^{k+1})-\vg_i^k\big\|^2\le L^2\|\vx^{k+1}-\vx^k\|^2$, we have from the above inequality that
\begin{equation}\label{eq:k-kl}
\hat{\Phi}(\vx^{k+1})-\hat{\Phi}(\vx^*)\le\frac{3}{\mu}\big(\frac{1}{\alpha_k^2}+sL^2\big)\|\vx^{k+1}-\vx^k\|^2+\frac{3}{\mu}\sum_{i=1}^s\|\bm{\delta}_i^k\|^2.
\end{equation}
Letting $\vx=\vx^k$ in \eqref{eq:ns-cvx-3} gives
\begin{align}\label{eq:dec3}\hat{\Phi}(\vx^{k+1})-\hat{\Phi}(\vx^k)\le & -\sum_{i=1}^s\langle \bm{\delta}_i^k,\vx_i^{k+1}-\vx_i^k\rangle-\big(\frac{1}{\alpha_k}-\frac{L}{2}\big)\|\vx^{k+1}-\vx^k\|^2\cr
\le &\frac{\alpha_k}{2-L\alpha_k}\sum_{i=1}^s\|\bm{\delta}_i^k\|^2-\big(\frac{1}{2\alpha_k}-\frac{L}{4}\big)\|\vx^{k+1}-\vx^k\|^2,
\end{align}
which together with \eqref{eq:k-kl} implies
\begin{align}
&\hat{\Phi}(\vx^{k+1})-\hat{\Phi}(\vx^*)\cr
\le & \frac{L}{2}\|\vx^{k+1}-\vx^*\|^2\cr
\le &\frac{L}{\mu}\big(\hat{\Phi}(\vx^{k+1})-\hat{\Phi}(\vx^*)\big)\cr
\le & \frac{3L}{\mu^2}\big(\frac{1}{\alpha_k^2}+sL^2\big)\|\vx^{k+1}-\vx^k\|^2+\frac{3L}{\mu^2}\sum_{i=1}^s\|\bm{\delta}_i^k\|^2\cr
\le & \frac{3L}{\mu^2}\big(\frac{1}{\alpha_k^2}+sL^2\big)\frac{1}{\frac{1}{2\alpha_k}-\frac{L}{4}}\left(\Phi(\vx^k)-\Phi(\vx^{k+1})+\frac{\alpha_k}{2-L\alpha_k}\sum_{i=1}^s\|\bm{\delta}_i^k\|^2\right)\label{eq:dec4}\\
&+\frac{3L}{\mu^2}\sum_{i=1}^s\|\bm{\delta}_i^k\|^2,\nonumber
\end{align}
where the first inequality follows from the gradient Lipschitz continuity of $F$, the second one from the strong convexity of $\Phi$, the third one from \eqref{eq:k-kl}, and the fourth one from \eqref{eq:dec3}.
Taking expectation on both sides of \eqref{eq:dec4}, using \eqref{eq:var}, and arranging terms give
$$\EE[\hat{\Phi}(\vx^{k+1})-\hat{\Phi}(\vx^*)]\le \gamma(\alpha_k)\EE[\hat{\Phi}(\vx^k)-\hat{\Phi}(\vx^{k+1})]+s\nu(\alpha_k)\sigma_k^2,$$
which is  equivalent to the desired result \eqref{eq:dec5}. Noting $\Phi(\vx^k)=\hat{\Phi}(\vx^k)$ and $\Phi(\vx^*)=\hat{\Phi}(\vx^*)$ completes the proof.
\end{proof}

From \eqref{eq:dec5}, one can see that the convergence rate of Algorithm \ref{alg:bsg} depends on how fast $\sigma_k$ decreases. If $\sigma_k\equiv0$, i.e., the deterministic case, \eqref{eq:dec5} implies linear convergence. Generally, $\sigma_k$ does not vanish. However, one can decrease it by increasing the batch size $m_k$. This point will be discussed in Remark \ref{rm:var} below. Directly from \eqref{eq:dec5} and using the following lemma, we can get a convergence result  in Theorem \ref{thm:wkrate} below, whose proof follows \cite{friedlander2012hybrid}.

\begin{lemma}\label{lem:limmax}
Let $e_k=\max\{a_k, \eta^k\}$. If $a_{k+1}/a_k$ has a finite limit, then $e_{k+1}/e_k$ also has a finite limit, which equals $\eta$ or the limit of ${a_{k+1}}/{a_k}$, and the limit is no less than $\eta$.
\end{lemma}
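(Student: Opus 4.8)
The plan is to prove the sharper statement that $\lim_{k\to\infty} e_{k+1}/e_k$ exists and equals $\max\{\eta,L\}$, where $L:=\lim_{k\to\infty} a_{k+1}/a_k$; this yields everything claimed, since $\max\{\eta,L\}$ is finite, is one of $\eta$ and $L$, and is at least $\eta$. Here $a_k>0$ and $\eta\in(0,1)$, so each $e_k=\max\{a_k,\eta^k\}>0$ and all ratios are well defined.

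The first step is an elementary two-sided bound valid for every $k$, with no convergence assumption:
$$\min\Bigl\{\frac{a_{k+1}}{a_k},\,\eta\Bigr\}\ \le\ \frac{e_{k+1}}{e_k}\ \le\ \max\Bigl\{\frac{a_{k+1}}{a_k},\,\eta\Bigr\}.$$
For the upper bound, write $a_{k+1}=(a_{k+1}/a_k)a_k\le(a_{k+1}/a_k)e_k$ and $\eta^{k+1}=\eta\,\eta^k\le\eta e_k$, so $e_{k+1}=\max\{a_{k+1},\eta^{k+1}\}\le\max\{a_{k+1}/a_k,\eta\}\,e_k$. For the lower bound, split cases: if $e_k=a_k$ then $e_{k+1}\ge a_{k+1}=(a_{k+1}/a_k)e_k$, and if $e_k=\eta^k$ then $e_{k+1}\ge\eta^{k+1}=\eta e_k$.

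The second step is to pass to the limit. When $L=\eta$ the left and right sides of the sandwich both tend to $\eta$, so the squeeze theorem gives $e_{k+1}/e_k\to\eta=\max\{\eta,L\}$. When $L\neq\eta$, I would instead show that one of $\{a_k\}$, $\{\eta^k\}$ eventually dominates the other: since $a_{k+1}/a_k\to L$ we have $\frac1k\log a_k\to\log L$ (Ces\`aro means of a convergent sequence, with the convention $\log 0=-\infty$), hence $a_k/\eta^k=\exp\bigl(k(\tfrac1k\log a_k-\log\eta)\bigr)$ tends to $+\infty$ if $L>\eta$ and to $0$ if $L<\eta$. In the first case $e_k=a_k$ for all large $k$, so $e_{k+1}/e_k=a_{k+1}/a_k\to L=\max\{\eta,L\}$; in the second, $e_k=\eta^k$ for all large $k$, so $e_{k+1}/e_k=\eta=\max\{\eta,L\}$. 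The only delicate point is the borderline case $L=\eta$, where $e_k$ may genuinely alternate between the $a_k$ term and the $\eta^k$ term infinitely often so that no ``eventual domination'' argument applies; this is exactly why the cheap sandwich of the first step is worth isolating, since it settles that case with no further work, while everything else is bookkeeping with elementary inequalities and one classical ratio/root fact.
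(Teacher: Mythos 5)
Your proof is correct, and it takes a noticeably different route from the paper's in the one place where the lemma is delicate. For the non-borderline cases your argument and the paper's are essentially the same idea dressed differently: the paper bounds $a_{k+1}/a_k\le\frac{\nu+\eta}{2}$ for large $k$ and deduces a geometric bound $a_k\le\big(\frac{\nu+\eta}{2}\big)^{k-K_1}a_{K_1}$, hence eventual domination of one term of the max over the other; your Ces\`aro-on-logs (ratio-implies-root) argument reaches the same ``eventually $e_k=a_k$ or eventually $e_k=\eta^k$'' conclusion. The real divergence is the case $L=\eta$: the paper handles it by extracting the crossing indices $\{n_\ell\}$ at which the max switches between $a_k$ and $\eta^k$ and checking the ratio separately at those transitions, whereas you isolate the uniform two-sided bound
$$\min\Bigl\{\tfrac{a_{k+1}}{a_k},\,\eta\Bigr\}\ \le\ \tfrac{e_{k+1}}{e_k}\ \le\ \max\Bigl\{\tfrac{a_{k+1}}{a_k},\,\eta\Bigr\},$$
which is valid for every $k$ with no assumptions, and then the squeeze theorem disposes of the borderline case in one line. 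Your sandwich is both simpler and more robust than the crossing-sequence bookkeeping (it avoids any discussion of whether the switch happens finitely or infinitely often), and as a bonus it cleanly packages the conclusion in the sharper form $\lim_k e_{k+1}/e_k=\max\{\eta,L\}$, which the paper's proof also yields but states only as ``equals $\eta$ or $L$ and is at least $\eta$.'' The only cosmetic caveat is that you should state explicitly, as the context of Theorem \ref{thm:wkrate} warrants, that $a_k>0$ for all $k$ (so the ratios and logarithms are defined); beyond that, nothing is missing.
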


\begin{theorem}\label{thm:wkrate}
Assume the assumptions in Theorem \ref{thm:strg-g}. Suppose $0<\underline{\alpha}=\inf_k\alpha_k\le\sup_k\alpha_k=\overline{\alpha}<\frac{2}{L}$. If ${\sigma_{k+1}^2}/{\sigma_k^2}$ has a finite limit,
then
\begin{equation}\label{eq:str-linear}\EE [\Phi(\vx^{k+1})-\Phi(\vx^*)] \le \eta^k(\Phi(\vx^1)-\Phi(\vx^*))+\cO(e_k),
\end{equation}
where $\eta=\frac{\gamma(\overline{\alpha})}{1+\gamma(\overline{\alpha})}$ and $e_k=\max\{\sigma_k^2,(\eta+\epsilon)^k\}$ for any $\epsilon>0$.
\end{theorem}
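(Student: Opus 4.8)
The plan is to unroll the one-step recursion \eqref{eq:dec5} into a closed-form bound and then match it against the definition of $e_k$. Write $\Delta_k=\EE[\Phi(\vx^k)-\Phi(\vx^*)]$, and abbreviate $\gamma_k=\gamma(\alpha_k)$, $\nu_k=\nu(\alpha_k)$, so that \eqref{eq:dec5} reads $\Delta_{k+1}\le \frac{\gamma_k}{1+\gamma_k}\Delta_k+\frac{s\nu_k}{1+\gamma_k}\sigma_k^2$. First I would observe that $\alpha\mapsto\gamma(\alpha)$ and $\alpha\mapsto\nu(\alpha)$ are continuous and positive on $(0,\tfrac2L)$ (both are built from $\tfrac1{\alpha^2}+sL^2$ and $\tfrac1{2\alpha}-\tfrac L4$, the latter being strictly positive precisely because $\alpha<\tfrac2L$), hence on the compact subinterval $[\underline\alpha,\overline\alpha]$ we have uniform bounds: the contraction factor $\tfrac{\gamma_k}{1+\gamma_k}\le \tfrac{\gamma(\overline\alpha)}{1+\gamma(\overline\alpha)}=\eta<1$ (using that $t\mapsto\tfrac t{1+t}$ is increasing and that $\gamma(\alpha)$ achieves its max over the interval at $\overline\alpha$ — or more safely, just bound $\gamma_k$ by $\gamma_{\max}:=\sup_{\alpha\in[\underline\alpha,\overline\alpha]}\gamma(\alpha)$ and set $\eta=\tfrac{\gamma_{\max}}{1+\gamma_{\max}}$, which is what the statement's $\gamma(\overline\alpha)$ should be read as since $\gamma$ need not be monotone), and $\tfrac{s\nu_k}{1+\gamma_k}\le C$ for a constant $C$ depending only on $s,L,\mu,\underline\alpha,\overline\alpha$.

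Next I would iterate: $\Delta_{k+1}\le \eta^k\Delta_1 + C\sum_{j=1}^{k}\eta^{k-j}\sigma_j^2$. The first term is exactly $\eta^k(\Phi(\vx^1)-\Phi(\vx^*))$. For the second term I need to show $\sum_{j=1}^k\eta^{k-j}\sigma_j^2=\cO(e_k)$ with $e_k=\max\{\sigma_k^2,(\eta+\epsilon)^k\}$. Split the sum at $j=\lceil k/2\rceil$: the head $\sum_{j\le k/2}\eta^{k-j}\sigma_j^2\le \eta^{k/2}\sum_{j\ge1}\sigma_j^2\cdot(\text{const})$ — but $\sum\sigma_j^2$ need not converge, so instead bound $\sigma_j^2$ by its sup if finite, or more robustly note that since $\sigma_{j+1}^2/\sigma_j^2$ has a finite limit $L_\sigma$, if $L_\sigma<1/\eta$ then $\sigma_j^2=\cO((L_\sigma+\epsilon')^j)$ and if $L_\sigma\ge1/\eta$ the ratio test gives $\sigma_j^2$ dominating $\eta^{j}$ so that $e_k\sim\sigma_k^2$ absorbs everything. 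The clean way, following \cite{friedlander2012hybrid}, is Lemma \ref{lem:limmax}: since $e_j=\max\{\sigma_j^2,(\eta+\epsilon)^j\}$ and $\sigma_{j+1}^2/\sigma_j^2$ converges, $e_{j+1}/e_j$ converges to some $\tau\ge\eta+\epsilon>\eta$; then $\sum_{j=1}^k\eta^{k-j}\sigma_j^2\le\sum_{j=1}^k\eta^{k-j}e_j$, and a standard estimate for a convolution of a geometric sequence with ratio $\eta$ against a sequence $e_j$ whose ratios converge to $\tau>\eta$ shows $\sum_{j=1}^k\eta^{k-j}e_j=\cO(e_k)$ (factor out $e_k$, reindex $i=k-j$, and bound $e_{k-i}/e_k$ by $(\eta+\tfrac\epsilon2)^{-i}$-type terms for large $k$ since ratios exceed $\eta+\tfrac\epsilon2$ eventually, making the sum a convergent geometric series times $e_k$).

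The main obstacle is the last step: controlling $\sum_{j=1}^k\eta^{k-j}\sigma_j^2$ uniformly without assuming summability of $\{\sigma_j^2\}$ or monotonicity — one must genuinely use the hypothesis that the ratio $\sigma_{j+1}^2/\sigma_j^2$ has a limit, route it through Lemma \ref{lem:limmax} to get that $e_{j+1}/e_j\to\tau>\eta$, and then invoke the convolution estimate. I would isolate that convolution bound as its own one-line lemma (or cite \cite{friedlander2012hybrid} directly, as the theorem statement signals). A secondary subtlety is the reading of $\eta=\gamma(\overline\alpha)/(1+\gamma(\overline\alpha))$: strictly one needs $\eta=\gamma_{\max}/(1+\gamma_{\max})$ over $[\underline\alpha,\overline\alpha]$, and I would either add a remark that $\gamma$ is decreasing on the relevant range (true when $sL^2$ is small relative to $1/\alpha^2$, i.e. for small stepsizes, since then $\gamma(\alpha)\approx\tfrac{3L}{\mu^2}\cdot\tfrac{2}{\alpha}$ is decreasing) so that the max is at $\overline\alpha$, or simply replace $\overline\alpha$ by "the maximizer of $\gamma$ on $[\underline\alpha,\overline\alpha]$" without affecting the $\cO(e_k)$ conclusion.
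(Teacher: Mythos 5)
Your proposal is correct and follows essentially the same route as the paper: uniformly bound the coefficients in \eqref{eq:dec5} over the stepsize range, unroll the recursion to get $\eta^k(\Phi(\vx^1)-\Phi(\vx^*))+\tau\sum_{j=1}^k\eta^{k-j}\sigma_j^2$, and then invoke Lemma \ref{lem:limmax} to conclude that $e_{j+1}/e_j$ eventually exceeds $\eta+\epsilon/2$, which controls the convolution term by $\cO(e_k)$. The only divergence is in that last step: the paper proves $\sum_{j=1}^k\eta^{k-j}\sigma_j^2\le Be_k$ by induction on $k$ with a suitably large constant $B$, whereas you factor out $e_k$ and sum a geometric series in $\eta/(\eta+\epsilon/2)$ after splitting off the finitely many early indices; the two arguments are interchangeable. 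Your caution about $\eta$ is also warranted: the paper simply asserts that $\gamma$ and $\nu$ are increasing on $(0,\tfrac{2}{L})$, but $\gamma(\alpha)=\frac{3L}{\mu^2}\cdot\frac{4(1+sL^2\alpha^2)}{\alpha(2-L\alpha)}$ blows up as $\alpha\to 0^+$ as well, so the safe reading is exactly the one you give, namely $\eta=\gamma_{\max}/(1+\gamma_{\max})$ with $\gamma_{\max}=\sup_{\alpha\in[\underline{\alpha},\overline{\alpha}]}\gamma(\alpha)$ (your parenthetical that a decreasing $\gamma$ would place the maximum at $\overline{\alpha}$ is a slip---it would sit at $\underline{\alpha}$---but this does not affect your argument, which never uses monotonicity).
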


\begin{proof}
Note that $\gamma(\alpha)$ and $\nu(\alpha)$ defined in \eqref{eq:gamnu} are both increasing with respect to $\alpha\in (0, \frac{2}{L})$. Hence it follows from \eqref{eq:dec5} that
$$\EE [\Phi(\vx^{k+1})-\Phi(\vx^*)]\le \frac{\gamma(\overline{\alpha})}{1+\gamma(\overline{\alpha})}\EE [\Phi(\vx^{k})-\Phi(\vx^*)]+\frac{s\nu(\overline{\alpha})}{1+\gamma(\underline{\alpha})}\sigma_k^2.$$
Let $\tau = \frac{s\nu(\overline{\alpha})}{1+\gamma(\underline{\alpha})}$. Then using the above inequality recursively yields
$$\EE [\Phi(\vx^{k+1})-\Phi(\vx^*)]\le \eta^k(\Phi(\vx^1)-\Phi(\vx^*))+\tau\sum_{j=1}^k\eta^{k-j}\sigma_j^2.$$

From Lemma \ref{lem:limmax}, we have $\lim_{k\to\infty}e_{k+1}/e_k\ge \eta+\epsilon$, and thus there is a sufficiently large integer $K$ such that $e_{k+1}/e_k\ge \eta+\epsilon/2,\,\forall k\ge K$.
Let $\mu_k=\sum_{j=1}^k\eta^{k-j}\sigma_j^2$ and choose a sufficiently large number $B$ such that $\mu_k\le Be_k, \forall k\le K$ and $(B-1)\epsilon\ge 2\eta$. Note that such $B$ must exist since $K$ is finite and $\epsilon>0$. Suppose that for some $k\ge K$, it holds $\mu_k\le Be_k$. Then
$$\mu_{k+1}=\eta\mu_k+\sigma_{k+1}^2\le \eta Be_k+e_{k+1}\le \frac{\eta B}{\eta+\epsilon/2}e_{k+1}+e_{k+1}\le Be_{k+1},$$
where the last inequality uses the conditions $(B-1)\epsilon\ge 2\eta$ and $e_{k+1}\ge 0$.
Hence, by induction, we conclude $\mu_k\le Be_k, \forall k$.
This completes the proof.
\end{proof}

\begin{remark}
If $m_k=\cO(k)$ in Algorithm \ref{alg:bsg}, then according to Remark \ref{rm:var}, we have the convergence rate $\cO\big(\max\{((\eta+1)/2)^k, 1/k\}\big)$ by letting $\epsilon=\frac{1-\eta}{2}$ in Theorem \ref{thm:wkrate}. If $\sigma_k$ decreases linearly, then we have a linear convergence rate.
\end{remark}

\subsection{Nonconvex case}
In this section, we do not assume any convexity of $F$ or $r_i$'s. First, we analyze Algorithm \ref{alg:bsg} for the smooth case with $r_i=0$ and $\cX_i=\RR^{n_i}$ for all $i$. Then, we impose a more restrictive condition on $\sigma_k$ and analyze the algorithm for the unconstrained nonsmooth and constrained smooth cases. We start our analysis with the following lemma, which can be found in Lemma A.5 of \cite{mairal2013mm} and
Proposition 1.2.4 of \cite{Bertsekas-NLP}.
\begin{lemma}\label{lem:seq}
For two nonnegative scalar sequences $\{a_k\}$ and $\{b_k\}$, if $\sum_{k=1}^\infty a_k=+\infty$ and $\sum_{k=1}^\infty a_kb_k<+\infty$, then
$$\liminf_{k\to\infty} b_k=0.$$
Furthermore, if $|b_{k+1}-b_k|\le B\cdot a_k$ for some constant $B>0$, then
$$\lim_{k\to\infty}b_k=0.$$
\end{lemma}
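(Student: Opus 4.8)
The plan is to prove the two claims in order, each by contradiction, exploiting only the divergence of $\sum a_k$ and the summability of $\sum a_k b_k$ (resp. the near-Lipschitz condition). First I would establish $\liminf_{k\to\infty} b_k = 0$. Suppose not: then there exist $\varepsilon > 0$ and an index $K$ such that $b_k \ge \varepsilon$ for all $k \ge K$. Consequently, for every $N > K$,
\[
\sum_{k=K}^{N} a_k b_k \ge \varepsilon \sum_{k=K}^{N} a_k,
\]
and letting $N \to \infty$ the right-hand side diverges because $\sum_{k=1}^\infty a_k = +\infty$ (removing the finitely many initial terms does not affect divergence). This forces $\sum_{k=1}^\infty a_k b_k = +\infty$, contradicting the hypothesis. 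Hence $\liminf_{k\to\infty} b_k = 0$.

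For the second claim, assume additionally $|b_{k+1} - b_k| \le B a_k$ and suppose $\lim_{k\to\infty} b_k \ne 0$. Since the $\liminf$ is $0$, there must be some $\varepsilon > 0$ for which $b_k \ge 2\varepsilon$ infinitely often while also $b_k < \varepsilon$ infinitely often. I would then extract, by a standard ``crossing'' argument, infinitely many disjoint index intervals $[p_\ell, q_\ell]$ on which $b_k$ climbs from below $\varepsilon$ at $p_\ell$ to above $2\varepsilon$ at $q_\ell$, with $b_k \ge \varepsilon$ throughout the interval (or at least on the portion where it has risen past $\varepsilon$). The near-Lipschitz bound gives, by telescoping the increments across such an interval,
\[
\varepsilon \le b_{q_\ell} - b_{p_\ell} \le \sum_{k=p_\ell}^{q_\ell - 1} |b_{k+1} - b_k| \le B \sum_{k=p_\ell}^{q_\ell - 1} a_k,
\]
so on each crossing interval $\sum_{k \in [p_\ell, q_\ell)} a_k \ge \varepsilon / B$. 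On the other hand, on these same intervals $b_k$ is bounded below by a positive constant (of order $\varepsilon$), so $\sum_{\ell} \sum_{k \in [p_\ell, q_\ell)} a_k b_k \gtrsim \varepsilon \sum_\ell (\varepsilon/B) = +\infty$, again contradicting $\sum a_k b_k < \infty$. Therefore $b_k \to 0$.

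The main obstacle is making the ``crossing interval'' bookkeeping clean: one has to choose the endpoints $p_\ell, q_\ell$ so that the intervals are disjoint, so that the lower bound $b_k \gtrsim \varepsilon$ genuinely holds on the part of each interval contributing to the sum $\sum a_k b_k$, and so that the gain $\varepsilon/B$ in $\sum a_k$ per interval is not double-counted. A convenient way to handle this is to fix a level $\varepsilon$ with both $\{k : b_k < \varepsilon\}$ and $\{k : b_k > 2\varepsilon\}$ infinite, and then define $p_\ell$ as successive times $b_k$ last drops below $\varepsilon$ before the next time $q_\ell$ it exceeds $2\varepsilon$; on $[p_\ell + 1, q_\ell]$ one has $b_k \ge \varepsilon$ (values stay at or above $\varepsilon$ until the crossing completes, by minimality of $q_\ell$), which supplies both the telescoping lower bound and the $b_k \gtrsim \varepsilon$ factor. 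Everything else is elementary, and since the lemma is cited from \cite{mairal2013mm} and \cite{Bertsekas-NLP} I would keep the write-up short.
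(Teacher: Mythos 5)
Your first claim is fine, but there is a genuine gap in the second part: the upward-crossing bookkeeping does not deliver the per-interval lower bound you assert. Defining $p_\ell$ as the last index below $\varepsilon$ before the first index $q_\ell$ above $2\varepsilon$, the telescoping bound $\varepsilon\le b_{q_\ell}-b_{p_\ell}\le B\sum_{k=p_\ell}^{q_\ell-1}a_k$ includes the index $p_\ell$, where $b_{p_\ell}<\varepsilon$ and may be arbitrarily small, and the entire stepsize mass $\varepsilon/B$ may sit at that single index. Concretely, if the crossing happens in one step ($q_\ell=p_\ell+1$, $a_{p_\ell}=2\varepsilon/B$, $b_{p_\ell}=2^{-\ell}$), then $\sum_{k\in[p_\ell,q_\ell)}a_k\ge\varepsilon/B$ but $\sum_{k\in[p_\ell,q_\ell)}a_kb_k=(2\varepsilon/B)2^{-\ell}$, which is summable over $\ell$; so your claim that each crossing interval contributes at least a fixed amount (of order $\varepsilon^2/B$) to $\sum_k a_kb_k$ fails, and the contradiction does not follow. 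The indices in $[p_\ell+1,q_\ell)$ do satisfy $b_k\ge\varepsilon$, but nothing forces their $a$-mass to be bounded away from zero, so the "fix" you sketch does not close the hole.

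The standard repair --- and essentially the argument behind the sources the paper cites for this lemma (Lemma A.5 of Mairal and Proposition 1.2.4 of Bertsekas; the paper itself gives no proof) --- is to use the \emph{downward} crossings instead. Since $\liminf_k b_k=0$, after each time $q_\ell$ with $b_{q_\ell}>2\varepsilon$ there is a first later index $s_\ell$ with $b_{s_\ell}<\varepsilon$; on $[q_\ell,s_\ell)$ one has $b_k\ge\varepsilon$ throughout, and telescoping the descent gives
\begin{equation*}
\varepsilon< b_{q_\ell}-b_{s_\ell}\le\sum_{k=q_\ell}^{s_\ell-1}|b_{k+1}-b_k|\le B\sum_{k=q_\ell}^{s_\ell-1}a_k,
\end{equation*}
so $\sum_{k=q_\ell}^{s_\ell-1}a_kb_k\ge\varepsilon^2/B$ on each of these (disjoint) intervals, contradicting $\sum_k a_kb_k<\infty$. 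With that substitution your write-up goes through; as it stands, the key per-interval estimate is unjustified.
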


\begin{theorem}\label{thm:convg1}
Let $\{\vx^k\}$ be generated from Algorithm \ref{alg:bsg} with $r_i=0$ and $\cX_i=\RR^{n_i}$ for all $i$ and with $\alpha_i^k=c_i^k\beta_k$, where $c_i^k, \beta_k$ are positive scalars such that $\beta_k\ge \beta_{k+1},\forall k$,
\begin{equation}\label{eq:bdc}
0<\inf_k c_i^k\le \sup_k c_i^k<\infty, \forall i,
\end{equation} and
\begin{equation}\label{eq:beta}
\sum_{k=1}^\infty\beta_k=+\infty,\qquad \sum_{k=1}^\infty\beta_k^2 < +\infty.
\end{equation}
Under Assumptions \ref{assump1} through \ref{assump3}, if $\sigma=\sup_k\sigma_k<\infty$, then
\begin{equation}\label{eq:convgrad}
\lim_{k\to\infty}\EE \|\nabla \Phi(\vx^k)\|=\lim_{k\to\infty}\EE \|\nabla F(\vx^k)\| = 0.
\end{equation}
\end{theorem}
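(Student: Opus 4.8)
The plan is to establish a descent-type inequality for $F$ along the Gauss--Seidel sweep, take expectations, sum over $k$, and then invoke Lemma~\ref{lem:seq} to conclude that $\EE\|\nabla F(\vx^k)\|\to 0$. First, since $r_i=0$ and $\cX_i=\RR^{n_i}$, the update \eqref{eq:update} reduces to the plain block gradient step $\vx_i^{k+1}=\vx_i^k-\alpha_i^k\tilde{\vg}_i^k$, so $\tilde{\vh}_i^k=\tilde{\vg}_i^k$. Using the partial-gradient Lipschitz continuity from Assumption~\ref{assump2} applied blockwise along the sweep $\vx_{<i}^{k+1},\vx_i,\vx_{>i}^k$, I would sum the per-block inequalities $F(\vx_{\le i}^{k+1},\vx_{>i}^k)-F(\vx_{<i}^{k+1},\vx_{\ge i}^k)\le \langle\vg_i^k,\vx_i^{k+1}-\vx_i^k\rangle+\tfrac{L}{2}\|\vx_i^{k+1}-\vx_i^k\|^2$ to get a bound on $F(\vx^{k+1})-F(\vx^k)$ in terms of $\sum_i\langle\vg_i^k,\vx_i^{k+1}-\vx_i^k\rangle$ and $\tfrac{L}{2}\|\vx^{k+1}-\vx^k\|^2$. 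Writing $\vx_i^{k+1}-\vx_i^k=-\alpha_i^k(\vg_i^k+\bm\delta_i^k)$ and collecting terms produces something like
\begin{align*}
F(\vx^{k+1})-F(\vx^k)\le \sum_{i=1}^s\Big(-\alpha_i^k+\tfrac{L}{2}(\alpha_i^k)^2\Big)\|\vg_i^k\|^2 - \sum_{i=1}^s\alpha_i^k\langle\vg_i^k,\bm\delta_i^k\rangle + \tfrac{L}{2}\sum_{i=1}^s(\alpha_i^k)^2\|\bm\delta_i^k\|^2,
\end{align*}
plus a cross term $L(\alpha_i^k)^2\langle\vg_i^k,\bm\delta_i^k\rangle$ that I would fold in by Young's inequality.

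The central difficulty — and the place where the biasedness of the stochastic gradient bites — is controlling $\EE\langle\vg_i^k,\bm\delta_i^k\rangle$. Here $\vg_i^k$ is \emph{not} independent of $\bm\delta_i^k$ conditional on $\bm\Xi_{[k-1]}$ (both depend on $\Xi_k$ through the earlier blocks' updates), so Lemma~\ref{lem:indp} does not apply directly. My approach is to split $\vg_i^k = \nabla_{\vx_i}F(\vx^k) + (\vg_i^k-\nabla_{\vx_i}F(\vx^k))$: the first piece $\nabla_{\vx_i}F(\vx^k)$ \emph{is} $\bm\Xi_{[k-1]}$-measurable, hence independent of $\bm\delta_i^k$ given $\bm\Xi_{[k-1]}$, so Lemma~\ref{lem:indp} gives $\EE\langle\nabla_{\vx_i}F(\vx^k),\bm\delta_i^k\rangle\le A(\max_j\alpha_j^k)\EE\|\nabla_{\vx_i}F(\vx^k)\|\le A M_\rho \max_j\alpha_j^k$ by Remark~\ref{rm:bditer}; the second piece is handled by Cauchy--Schwarz together with $\|\vg_i^k-\nabla_{\vx_i}F(\vx^k)\|\le L\|\vx^{k+1}-\vx^k\|$ and Lemma~\ref{lem:bdh}, which bounds $\EE\|\vx^{k+1}-\vx^k\|^2$. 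Because $\alpha_i^k=c_i^k\beta_k$ with the $c_i^k$ bounded above and below, all of these error terms are $O(\beta_k^2)$ (using $\sigma_k\le\sigma$ and $\EE\|\nabla F(\vx^k)\|\le M_\rho$), and the variance term is likewise $O(\beta_k^2)$.

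Thus I arrive at $\EE[F(\vx^{k+1})]-\EE[F(\vx^k)]\le -c\,\beta_k\,\EE\|\nabla F(\vx^k)\|^2 + C\beta_k^2$ for some constants $c,C>0$ (for $k$ large enough that $\tfrac{L}{2}\alpha_i^k\le\tfrac14$, say, so the quadratic coefficient in $\|\vg_i^k\|^2$ is $\le -\tfrac12\alpha_i^k$; note $\EE\|\nabla F(\vx^k)\|^2=\sum_i\EE\|\nabla_{\vx_i}F(\vx^k)\|^2$ and one should be mildly careful comparing $\|\vg_i^k\|^2$ against $\|\nabla_{\vx_i}F(\vx^k)\|^2$, absorbing the discrepancy into the $O(\beta_k^2)$ terms via $L\|\vx^{k+1}-\vx^k\|$ bounds). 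Summing over $k$ and using $\Phi=F$ bounded below (Assumption~\ref{assump2}) together with $\sum\beta_k^2<\infty$ gives $\sum_k\beta_k\,\EE\|\nabla F(\vx^k)\|^2<\infty$. Since $\sum_k\beta_k=\infty$, Lemma~\ref{lem:seq} yields $\liminf_k\EE\|\nabla F(\vx^k)\|^2=0$; to upgrade to a full limit I would verify the Lipschitz-type condition $|\EE\|\nabla F(\vx^{k+1})\|^2-\EE\|\nabla F(\vx^k)\|^2|\le B\beta_k$, which follows from $\|\nabla F(\vx^{k+1})\|^2-\|\nabla F(\vx^k)\|^2\le (\|\nabla F(\vx^{k+1})\|+\|\nabla F(\vx^k)\|)\cdot L\|\vx^{k+1}-\vx^k\|$, the gradient bound $M_\rho$, and $\EE\|\vx^{k+1}-\vx^k\|\le C'\beta_k$. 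Then Lemma~\ref{lem:seq} gives $\lim_k\EE\|\nabla F(\vx^k)\|^2=0$, hence by Jensen $\lim_k\EE\|\nabla F(\vx^k)\|=0$, and since $r_i\equiv 0$ we have $\nabla\Phi=\nabla F$, completing the proof. The main obstacle, as flagged, is the bias term $\EE\langle\vg_i^k,\bm\delta_i^k\rangle$; everything else is routine bookkeeping with the stepsize conditions \eqref{eq:bdc}--\eqref{eq:beta}.
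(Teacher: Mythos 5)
Your proposal is correct and follows essentially the same route as the paper's proof: the same per-block Lipschitz descent inequality with $\vx_i^{k+1}-\vx_i^k=-\alpha_i^k\tilde{\vg}_i^k$, the same handling of the bias term by splitting $\vg_i^k$ into the $\bm{\Xi}_{[k-1]}$-measurable part $\nabla_{\vx_i}F(\vx^k)$ (treated via Lemma \ref{lem:indp}) plus a difference controlled by $L\|\vx^{k+1}-\vx^k\|$, followed by summation and Lemma \ref{lem:seq} with the bounded-difference check. The only cosmetic difference is that you state the summability and limit directly for $\EE\|\nabla F(\vx^k)\|^2$ (absorbing the $\|\vg_i^k\|$-versus-$\|\nabla_{\vx_i}F(\vx^k)\|$ discrepancy into the summable terms), whereas the paper first proves $\EE\|\vg_i^k\|^2\to 0$ and then transfers to $\nabla_{\vx_i}F(\vx^k)$ at the end; both steps are valid.
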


\begin{proof}
From the Lipschitz continuity of $\nabla_{\vx_i}F$, it holds that
\begin{align}
& F(\vx_{\le i}^{k+1},\vx_{>i}^k)-F(\vx_{< i}^{k+1},\vx_{\ge i}^k)\cr
\le& \langle \vg_i^k,\vx_i^{k+1}-\vx_i^k\rangle +\frac{L}{2}\|\vx_i^{k+1}-\vx_i^k\|^2\cr
= &-\alpha_i^k\langle \vg_i^k,\tilde{\vg}_i^k\rangle +\frac{L}{2}(\alpha_i^k)^2\|\tilde{\vg}_i^k\|^2\cr
=& -\big(\alpha_i^k-\frac{L}{2}(\alpha_i^k)^2\big)\|\vg_i^k\|^2+\frac{L}{2}(\alpha_i^k)^2\|\bm{\delta}_i^k\|^2-\big(\alpha_i^k-L(\alpha_i^k)^2\big)\langle \vg_i^k,\bm{\delta}_i^k\rangle\cr
=&-\big(\alpha_i^k-\frac{L}{2}(\alpha_i^k)^2\big)\|\vg_i^k\|^2+\frac{L}{2}(\alpha_i^k)^2\|\bm{\delta}_i^k\|^2-\big(\alpha_i^k-L(\alpha_i^k)^2\big)\big(\langle \vg_i^k-\nabla_{\vx_i}F(\vx^k),\bm{\delta}_i^k\rangle+\langle\nabla_{\vx_i}F(\vx^k),\bm{\delta}_i^k\rangle\big)\cr
\le& -\big(\alpha_i^k-\frac{L}{2}(\alpha_i^k)^2\big)\|\vg_i^k\|^2+\frac{L}{2}(\alpha_i^k)^2\|\bm{\delta}_i^k\|^2-\big(\alpha_i^k-L(\alpha_i^k)^2\big)\langle \nabla_{\vx_i}F(\vx^k),\bm{\delta}_i^k\rangle\label{eq:eq1}\\
&+L\big(\alpha_i^k+L(\alpha_i^k)^2\big)\max_j\alpha_j^k\big(\|\bm{\delta}_i^k\|^2+\sum_{j=1}^s(\|\vg_j^k\|^2+\|\bm{\delta}_j^k\|^2)\big),\nonumber
\end{align}
where the last inequality follows from the following argument:
\begin{align*}
&-\big(\alpha_i^k-L(\alpha_i^k)^2\big)\langle \vg_i^k-\nabla_{\vx_i}F(\vx^k),\bm{\delta}_i^k\rangle\\
\le & \big|\alpha_i^k-L(\alpha_i^k)^2\big|\|\bm{\delta}_i^k\|\|\vg_i^k-\nabla_{\vx_i}F(\vx^k)\|\\
\le & L|\alpha_i^k-L(\alpha_i^k)^2|\|\bm{\delta}_i^k\|\|\vx_{<i}^{k+1}-\vx_{<i}^k\|\qquad(\text{from gradient Lipschitz continuity of }F)\\
\le & L|\alpha_i^k-L(\alpha_i^k)^2|\|\bm{\delta}_i^k\|\sqrt{\sum_{j=1}^s\|\alpha_j^k\tilde{\vg}_j^k\|^2}\qquad(\text{note }\vx_j^{k+1}=\vx_j^k-\alpha_j^k\tilde{\vg}_j^k)\\
\le& L\big(\alpha_i^k+L(\alpha_i^k)^2\big)\max_j\alpha_j^k\big(\|\bm{\delta}_i^k\|^2+\sum_{j=1}^s(\|\vg_j^k\|^2+\|\bm{\delta}_j^k\|^2)\big).\qquad(\text{from triangle inequality})
\end{align*}
Summing \eqref{eq:eq1} over $i$ and using \eqref{eq:var} give
\begin{align}
F(\vx^{k+1})-F(\vx^k)\le & -\sum_{i=1}^s \big(\alpha_i^k-\frac{L}{2}(\alpha_i^k)^2\big)\|\vg_i^k\|^2 -\sum_{i=1}^s \big(\alpha_i^k-L(\alpha_i^k)^2\big)\langle \nabla_{\vx_i}F(\vx^k),\bm{\delta}_i^k\rangle\label{eq:ineqsum}\\
&\hspace{-2cm}+ \sum_{i=1}^s \left(\frac{L}{2}(\alpha_i^k)^2\|\bm{\delta}_i^k\|^2+L\big(\alpha_i^k+L(\alpha_i^k)^2\big)\max_j\alpha_j^k\big(\|\bm{\delta}_i^k\|^2+\sum_{j=1}^s(\|\vg_j^k\|^2+\|\bm{\delta}_j^k\|^2)\big)\right).\nonumber
\end{align}
Note that since $\vx^k$ is independent of $\Xi_k$. Hence, from Lemma \ref{lem:indp}, we have
\begin{align*}\EE \langle \nabla_{\vx_i}F(\vx^k),\bm{\delta}_i^k\rangle\le A\cdot\max_j\alpha_j^k\cdot\EE\|\nabla_{\vx_i}F(\vx^k)\|
\le M_\rho A\cdot\max_j\alpha_j^k,
\end{align*}
where $M_\rho$ is defined in \eqref{eq:defM}.
Taking expectation over \eqref{eq:ineqsum}, we have
\begin{align}
&\EE F(\vx^{k+1})-\EE F(\vx^k)\cr
\le & -\sum_{i=1}^s \big(\alpha_i^k-\frac{L}{2}(\alpha_i^k)^2\big)\EE\|\vg_i^k\|^2+ \sum_{i=1}^s M_\rho A(\alpha_i^k+L(\alpha_i^k)^2)\max_j\alpha_j^k\cr
& +\sum_{i=1}^s \left(\frac{L}{2}(\alpha_i^k)^2\sigma^2+L\big(\alpha_i^k+L(\alpha_i^k)^2\big)\max_j\alpha_j^k
\big((s+1)\sigma^2+sM_\rho^2\big)\right)\cr
\le & -\sum_{i=1}^s (c\beta_k-\frac{LC^2}{2}\beta_k^2)\EE\|\vg_i^k\|^2+ \sum_{i=1}^s M_\rho AC\beta_k(C\beta_k+LC^2\beta_k^2) \label{eq:dec}\\
&+\sum_{i=1}^s \left(\frac{LC^2}{2}\beta_k^2\sigma^2+L\beta_k(C\beta_k+LC^2\beta_k^2)\big((s+1)\sigma^2+sM_\rho^2\big)\right),\nonumber
\end{align}
where
\begin{equation}\label{eq:defC}
c=\min_i\inf_k c_i^k,\qquad C=\max_i\sup_k c_i^k.
\end{equation}

Note that $F$ is lower bounded, $\EE\|\vg_i^k\|^2\le M_\rho^2$, and $0<c\le C<\infty$.
Summing \eqref{eq:dec} over $k$ and using \eqref{eq:beta}, we have
$$\sum_{k=1}^\infty \beta_k\EE\|\vg_i^k\|^2 < \infty, \forall i.$$
Furthermore,
\begin{align}
&\left|\EE\|\vg_i^{k+1}\|^2-\EE\|\vg_i^{k}\|^2\right|\cr
\le & \EE\big[\|\vg_i^{k+1}+\vg_i^{k}\|\cdot\|\vg_i^{k+1}-\vg_i^{k}\|\big]\cr
\le & 2LM_\rho\EE\big\|(\vx_{j<i}^{k+2},\vx_{j\ge i}^{k+1})-(\vx_{j<i}^{k+1},\vx_{j\ge i}^{k})\big\|\qquad(\text{from Remark \ref{rm:bditer} and gradient Lipschitz continuity of }F)\cr
= & 2LM_\rho\EE\sqrt{\sum_{j<i}\|\alpha_j^{k+1}\tilde{\vg}_j^{k+1}\|^2+\sum_{j\ge i}\|\alpha_j^{k}\tilde{\vg}_j^{k}\|^2}\qquad(\text{note }\vx_j^{k+1}=\vx_j^k-\alpha_j^k\tilde{\vg}_j^k)\cr
\le & 2LM_\rho C\beta_k\EE\sqrt{\sum_{j<i}\|\tilde{\vg}_j^{k+1}\|^2+\sum_{j\ge i}\|\tilde{\vg}_j^{k}\|^2}\qquad(\text{from definition of }\alpha_i^k\text{ and the monotonicity of }\beta_k)\cr
\le & 2LM_\rho C\beta_k\sqrt{\EE\big[\sum_{j<i}\|\tilde{\vg}_j^{k+1}\|^2+\sum_{j\ge i}\|\tilde{\vg}_j^{k}\|^2\big]}\qquad(\text{from Jenson's inequality})\cr
\le & 2LM_\rho C\beta_k\sqrt{2s(M_\rho^2+\sigma^2)}.\qquad(\text{from Jenson's inequality})\label{eq:gdiff}
\end{align}
According to Lemma \ref{lem:seq}, we have $\EE \|\vg_i^k\|^2\to 0,~\forall i$, as $k\to\infty$ and thus $\EE \|\vg_i^k\|\to 0,~\forall i$, as $k\to\infty$ by Jensen's inequality. Hence,
\begin{align*}\EE\|\nabla_{\vx_i}F(\vx^k)\|\le & \EE\|\nabla_{\vx_i}F(\vx^k)-\vg_i^k\|+\EE\|\vg_i^k\|\\
\le & L\cdot\EE\|\vx_{<i}^{k+1}-\vx_{<i}^k\|+\EE\|\vg_i^k\|\\
 \le & LC\sqrt{2s(M_\rho^2+\sigma^2)}\beta_k+\EE\|\vg_i^k\| \to 0,\ \forall i, \text{ as }k\to\infty,
\end{align*}
where the last inequality is obtained following the same argument for \eqref{eq:gdiff}.
Therefore, the desired result is obtained.
\end{proof}

\begin{remark}
The above proof only needs the boundedness of $\EE\|\vg_i^k\|^2,~\forall i, k$, instead of the stronger Assumption \ref{assump3}.
\end{remark}

We next analyze Algorithm \ref{alg:bsg} for the non-smooth case with the notation in \eqref{eq:newr} through \eqref{eq:newphi}.

\begin{theorem}\label{thm:convg2}
Let $\{\vx^k\}$ be generated from Algorithm \ref{alg:bsg} with $\alpha_i^k$'s being the same as those in Theorem \ref{thm:convg1}. Under Assumptions \ref{assump1} through \ref{assump4}, if either  $\cX_i=\RR^{n_i}$ or $r_i=0$ for all $i$, and
\begin{equation}\label{eq:beta2}\sum_{k=1}^\infty\beta_k\sigma_k^2<\infty,
\end{equation}
then there exists an index subsequence $\cK$ such that
\begin{equation}\label{eq:subconvg}
\lim_{\substack{k\to\infty\\ k\in\cK}}\EE \big[\dist\big(\vzero,\partial \hat{\Phi}(\vx^k)\big)\big] = 0,
\end{equation}
where $\dist(\vy,\cX)=\min_{\vx\in\cX}\|\vx-\vy\|$.
\end{theorem}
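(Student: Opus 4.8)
The plan is to mimic the descent argument of Theorem \ref{thm:convg1} but applied to $\hat\Phi$ instead of $F$, since now the regularizers $\hat r_i$ are present. First I would establish a sufficient-decrease inequality for $\hat\Phi(\vx^{k+1})-\hat\Phi(\vx^k)$. Starting from \eqref{eq:ns-cvx-3} with $\vx=\vx^k$ (equivalently from \eqref{eq:dec3}, which is already derived there), one gets
\begin{equation*}
\hat\Phi(\vx^{k+1})-\hat\Phi(\vx^k)\le -\sum_{i=1}^s\langle\bm\delta_i^k,\vx_i^{k+1}-\vx_i^k\rangle-\Big(\frac1{\alpha_k}-\frac L2\Big)\|\vx^{k+1}-\vx^k\|^2.
\end{equation*}
However, $\alpha_i^k$ may differ across $i$ here, so I would redo this block-by-block as in \eqref{eq:ns-cvx-1}, \eqref{eq:ns-cvx-2}, being careful that the per-block coefficient is $\frac1{\alpha_i^k}-\frac L2>0$ for $k$ large (guaranteed by $\beta_k\to0$ and \eqref{eq:bdc}). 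After a Young's-inequality split of the cross term $-\langle\bm\delta_i^k,\vx_i^{k+1}-\vx_i^k\rangle$ and taking expectations using \eqref{eq:var}, this yields, for $k$ large,
\begin{equation*}
\EE[\hat\Phi(\vx^{k+1})-\hat\Phi(\vx^k)]\le -c'\beta_k\,\EE\|\tilde\vh^k\|^2+C'\beta_k\sigma_k^2,
\end{equation*}
where $\tilde\vh^k=(\tilde\vh_1^k,\ldots,\tilde\vh_s^k)$ is the stacked stochastic gradient mapping from Lemma \ref{lem:bdh}, and $c',C'>0$ depend on $L,c,C$. Summing over $k$, using that $\hat\Phi$ is bounded below (Assumption \ref{assump2}, since $\hat r_i$ only adds indicators of convex sets and the original $r_i$) and using \eqref{eq:beta2} to bound $\sum_k\beta_k\sigma_k^2<\infty$, I conclude $\sum_{k}\beta_k\,\EE\|\tilde\vh^k\|^2<\infty$. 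Since $\sum_k\beta_k=\infty$ by \eqref{eq:beta}, Lemma \ref{lem:seq} gives $\liminf_{k\to\infty}\EE\|\tilde\vh^k\|^2=0$, hence there is a subsequence $\cK$ with $\EE\|\tilde\vh^k\|\to0$ along $\cK$.

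Next I would translate $\EE\|\tilde\vh^k\|\to0$ into $\EE[\dist(\vzero,\partial\hat\Phi(\vx^k))]\to0$ along $\cK$. The optimality condition of \eqref{eq:update3} is $\frac1{\alpha_i^k}(\vx_i^k-\vx_i^{k+1})-\tilde\vg_i^k\in\partial\hat r_i(\vx_i^{k+1})$, i.e. $\tilde\vh_i^k-\tilde\vg_i^k\in\partial\hat r_i(\vx_i^{k+1})$. Adding $\nabla_{\vx_i}F(\vx^k)$ gives
\begin{equation*}
\tilde\vh_i^k-\tilde\vg_i^k+\nabla_{\vx_i}F(\vx^k)\in\partial\hat r_i(\vx_i^{k+1})+\nabla_{\vx_i}F(\vx^k).
\end{equation*}
The subdifferential I need, though, is at $\vx^k$, not at $(\vx_{<i}^{k+1},\vx_{\ge i}^k)$ or $\vx^{k+1}$, so I would instead argue that $\partial\hat\Phi(\vx^k)$ contains a point close to $\vzero$ in expectation. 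Writing $\tilde\vg_i^k=\vg_i^k+\bm\delta_i^k$ and $\vg_i^k=\nabla_{\vx_i}F(\vx_{<i}^{k+1},\vx_{\ge i}^k)$, the residual $\nabla_{\vx_i}F(\vx^k)-\tilde\vg_i^k+\tilde\vh_i^k$ must be shown small; this needs: (i) $\EE\|\tilde\vh_i^k\|\to0$ along $\cK$; (ii) $\EE\|\bm\delta_i^k\|\le\sigma_k\to0$, which follows from \eqref{eq:beta2} together with $\sum\beta_k=\infty$ forcing $\liminf\sigma_k^2=0$ — but to get it along the \emph{same} $\cK$ I would intersect the two subsequences or, more cleanly, absorb $\sigma_k^2$ into the summable term so that the chosen $\cK$ already has $\sigma_k\to0$; (iii) $\EE\|\nabla_{\vx_i}F(\vx^k)-\vg_i^k\|\le L\,\EE\|\vx_{<i}^{k+1}-\vx_{<i}^k\|\le LC\beta_k\sqrt{\sum_j\EE\|\tilde\vg_j^k\|^2}\le LC\beta_k\sqrt{s(M_\rho^2+\sigma^2)}\to0$, exactly as in \eqref{eq:gdiff}. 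But I also need $\vx_i^k$ versus $\vx_i^{k+1}$ in the subdifferential argument: since $\hat r_i$ may be nonsmooth, $\dist(\vzero,\partial\hat r_i(\vx_i^k))$ need not be controlled by $\dist(\vzero,\partial\hat r_i(\vx_i^{k+1}))$. The clean fix is to evaluate the first-order condition at $\vx^{k+1}$: $\tilde\vh_i^k-\tilde\vg_i^k+\nabla_{\vx_i}F(\vx^{k+1})\in\partial\hat\Phi(\vx^{k+1})$ componentwise, so $\dist(\vzero,\partial\hat\Phi(\vx^{k+1}))\le\sum_i\|\tilde\vh_i^k\|+\|\bm\delta_i^k\|+\|\nabla_{\vx_i}F(\vx^{k+1})-\vg_i^k\|$, and I would then state the conclusion along the shifted subsequence $\cK+1$, or simply note that $\EE\|\vx^{k+1}-\vx^k\|\le C\beta_k\sqrt{s(M_\rho^2+\sigma^2)}\to0$ so passing from $\vx^{k+1}$ to $\vx^k$ in the statement is harmless (it only changes the residual by an $O(\beta_k)$ term via gradient Lipschitz continuity of $F$ and absorbs into the error).

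The main obstacle is the handling of the \emph{biasedness} of $\bm\delta_i^k$ together with keeping every error term along \emph{one common subsequence}. In the decrease inequality, the cross term $-\sum_i(\alpha_i^k-L(\alpha_i^k)^2)\langle\nabla_{\vx_i}F(\vx^k),\bm\delta_i^k\rangle$ cannot be dropped as in the unbiased case; I would control it via Lemma \ref{lem:indp} (using that $\vx^k$ is independent of $\Xi_k$ given $\bm\Xi_{[k-1]}$), getting a contribution bounded by $M_\rho A\max_j\alpha_j^k\cdot\max_j\alpha_j^k=O(\beta_k^2)$, which is summable by \eqref{eq:beta}; similarly the $\langle\vg_i^k-\nabla_{\vx_i}F(\vx^k),\bm\delta_i^k\rangle$ remainder is $O(\beta_k\cdot\beta_k)$-type after Cauchy–Schwarz and is summable. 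So the descent inequality picks up only summable perturbations and the argument survives. The other delicate point is ensuring $\hat\Phi$ is bounded below and lower semicontinuous so that $\partial\hat\Phi$ (limiting subdifferential) behaves additively as $\partial F+\sum_i\partial\hat r_i$ — this is fine because $F$ is $C^1$, so $\partial\hat\Phi(\vx)=\nabla F(\vx)+\partial\hat r_1(\vx_1)\times\cdots\times\partial\hat r_s(\vx_s)$ with no qualification needed.
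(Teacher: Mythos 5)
Your proposal follows essentially the same route as the paper: derive the per-block sufficient-decrease inequality for $\hat\Phi$ from the optimality of \eqref{eq:update3} and partial-gradient Lipschitz continuity, split the $\langle\bm{\delta}_i^k,\vx_i^{k+1}-\vx_i^k\rangle$ term by Young's inequality, take expectations with \eqref{eq:var}, sum using lower boundedness and \eqref{eq:beta2} to get $\sum_k\beta_k\EE\|\tilde{\vh}_i^k\|^2<\infty$, invoke Lemma \ref{lem:seq} for a subsequence, and then convert $\EE\|\tilde{\vh}_i^k\|\to0$ plus the $O(\beta_k)$ Lipschitz mismatch and $\sigma_k$ terms (refining $\cK$ so $\sigma_k\to0$, and tolerating the harmless $\vx^{k+1}$-versus-$\vx^k$ shift) into \eqref{eq:subconvg}, which is exactly what the paper does. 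The only remark is that your detour through Lemma \ref{lem:indp} to control a bias cross term is unnecessary here, since the Young's-inequality split you already performed (as in the paper's \eqref{eq:eq6}) disposes of $\bm{\delta}_i^k$ without invoking \eqref{eq:mean}.
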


\begin{proof}
From the optimality of $\vx_i^{k+1}$ for \eqref{eq:update3}, it holds that
$$ \hat{r}_i(\vx_i^{k+1})- \hat{r}_i(\vx_i^k)\le -\langle\tilde{\vg}_i^k,\vx_i^{k+1}-\vx_i^{k}\rangle-\frac{1}{2\alpha_i^k}\|\vx_i^{k+1}-\vx_i^{k}\|^2.$$
In addition, from the Lipschitz continuity condition \eqref{eq:lipobj}, it follows that
$$ F(\vx_{\le i}^{k+1},\vx_{>i}^k)-F(\vx_{< i}^{k+1},\vx_{\ge i}^k)
\le \langle \vg_i^k,\vx_i^{k+1}-\vx_i^k\rangle +\frac{L}{2}\|\vx_i^{k+1}-\vx_i^k\|^2.$$
From \eqref{eq:bdc} and \eqref{eq:beta}, we have $\alpha_i^k\to 0, \forall i$ as $k\to\infty$ and can thus take sufficiently large $\tilde{k}$ such that $\alpha_i^k<\frac{1}{2L},\forall i, k\ge \tilde{k}$. Summing up the above two inequalities and assuming $k\ge\tilde{k}$, we have
\begin{align}
&\hat{\Phi}(\vx_{\le i}^{k+1},\vx_{>i}^k)-\hat{\Phi}(\vx_{< i}^{k+1},\vx_{\ge i}^k)\cr
\le &-\langle \bm{\delta}_i^k,\vx_i^{k+1}-\vx_i^k\rangle -(\frac{1}{2\alpha_i^k}-\frac{L}{2})\|\vx_i^{k+1}-\vx_i^k\|^2\label{eq:eq7}\\
\le &\ \frac{\alpha_i^k}{1-L\alpha_i^k}\|\bm{\delta}_i^k\|^2-\frac{1}{2}(\frac{1}{2\alpha_i^k}-\frac{L}{2})\|\vx_i^{k+1}-\vx_i^k\|^2\label{eq:eq6}\\
=&\ \frac{\alpha_i^k}{1-L\alpha_i^k}\|\bm{\delta}_i^k\|^2-\big(\frac{\alpha_i^k}{4}-\frac{L}{4}(\alpha_i^k)^2\big)\|\tilde{\vh}_i^k\|^2\nonumber
\end{align}
where we have used the Young's inequality in the second inequality. Summing  the above inequality over $i$ yields
\begin{equation}\label{eq:dec2}
\hat{\Phi}(\vx^{k+1})-\hat{\Phi}(\vx^k) \le \sum_{i=1}^s\frac{\alpha_i^k}{1-L\alpha_i^k}\|\bm{\delta}_i^k\|^2-\sum_{i=1}^s\big(\frac{\alpha_i^k}{4}-\frac{L}{4}(\alpha_i^k)^2\big)\|\tilde{\vh}_i^k\|^2.
\end{equation}
Taking expectation on both sides of the above inequality gives
\begin{align}\EE \hat{\Phi}(\vx^{k+1})-\EE \hat{\Phi}(\vx^k) \le & \sum_{i=1}^s\frac{\alpha_i^k}{1-L\alpha_i^k}\sigma_k^2-\sum_{i=1}^s\big(\frac{\alpha_i^k}{4}-\frac{L}{4}(\alpha_i^k)^2\big)\EE \|\tilde{\vh}_i^k\|^2\cr
\le &\sum_{i=1}^s 2C\beta_k\sigma_k^2-\sum_{i=1}^s\big(\frac{c}{4}\beta_k-\frac{L}{4}C^2\beta_k^2\big)\EE \|\tilde{\vh}_i^k\|^2,\label{eq:eq3}
\end{align}
where in the second inequality, we have assumed $\alpha_i^k<\frac{1}{2L}$ by taking $k\ge\tilde{k}$, and $c$ and $C$ are defined in \eqref{eq:defC}.
Note that from Lemma \ref{lem:bdh}, we have that $\EE\|\tilde{\vh}_i^k\|^2$ is bounded for all $i$ and $k$. In addition, by \eqref{eq:beta} and \eqref{eq:beta2} and recalling that $\Phi$, and thus $\hat{\Phi}$, is lower bounded, we have from \eqref{eq:eq3} that
$$\sum_{k=0}^\infty\beta_k\EE\|\tilde{\vh}_i^k\|^2<\infty, \forall i.$$
Hence, from Lemma \ref{lem:seq}, there must be a subsequence $\{\EE \|\tilde{\vh}_i^k\|^2\}_{k\in\cK}$ converging to \emph{zero} for all $i$. 

Furthermore, note that
$$\tilde{\vh}_i^k=\frac{1}{\alpha_i^k}(\vx_i^k-\vx_i^{k+1})\in \tilde{\vg}_i^k+\partial \hat{r}_i(\vx_i^{k+1}), \forall i,$$
which is equivalent to
$$\tilde{\vh}_i^k+\nabla_{\vx_i}F(\vx^{k+1})- \tilde{\vg}_i^k\in\nabla_{\vx_i}F(\vx^{k+1})+\partial \hat{r}_i(\vx_i^{k+1}),\forall i.$$
Hence,
\begin{align*}
\dist(\vzero,\partial \hat{\Phi}(\vx^{k+1}))^2\le&\sum_{i=1}^s\|\tilde{\vh}_i^k+\nabla_{\vx_i}F(\vx^{k+1})- \tilde{\vg}_i^k\|^2\\
\le&2\sum_{i=1}^s\big(\|\tilde{\vh}_i^k\|^2+\|\nabla_{\vx_i}F(\vx^{k+1})- \tilde{\vg}_i^k\|^2\big)\\
\le&4\sum_{i=1}^s\left(\|\tilde{\vh}_i^k\|^2+\|\nabla_{\vx_i}F(\vx^{k+1})- {\vg}_i^k\|^2+\|\bm{\delta}_i^k\|^2\right),
\end{align*}
and thus taking expectation over the last inequality gives
$$\EE[\dist(\vzero,\partial \hat{\Phi}(\vx^{k+1}))^2]\le4\sum_{i=1}^s\left(\EE\|\tilde{\vh}_i^k\|^2+\EE\|\nabla_{\vx_i}F(\vx^{k+1})- {\vg}_i^k\|^2+\EE\|\bm{\delta}_i^k\|^2\right).$$
Following the  argument same as that at the end of the proof of Theorem \ref{thm:convg1}, one can easily show that $\EE\|\nabla_{\vx_i}F(\vx^{k+1})- {\vg}_i^k\|^2\to 0,~\forall i$, as $k\to\infty$. Taking another subsequence of $\cK$ if necessary, we have $\EE\|\bm{\delta}_i^k\|^2\le\sigma_k^2\to 0,~ \forall i,$ as $\cK\ni k\to\infty$ from \eqref{eq:beta2}.
Hence, the right-hand side of the last inequality converges to \emph{zero} as $\cK \ni k\to\infty$. Applying Jensen's inequality completes the proof.
\end{proof}


From \eqref{eq:dec2} in the above proof, if $\sigma_k$ decreases in a faster way, we can show that \eqref{eq:subconvg} holds for $\cK$ being the whole index sequence. This result is summarized below.

\begin{corollary}
Let $\{\vx^k\}$ be generated from Algorithm \ref{alg:bsg} with $\alpha_i^k$'s satisfying $0<\inf_k\alpha_i^k\le\sup_k\alpha_i^k<\frac{1}{L},~\forall i$. Under Assumptions \ref{assump1} through \ref{assump4}, if either $\cX_i=\RR^{n_i}$ or $r_i=0$ for all $i$ and $\sum_{k=1}^\infty \sigma_k^2<\infty$, then
$$\lim_{k\to\infty}\EE \big[\dist\big(\vzero,\partial \hat{\Phi}(\vx^k)\big)\big] = 0,$$
which says that the optimality condition for \eqref{eq:main} asymptotically holds in expectation.
\end{corollary}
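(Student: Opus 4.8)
The plan is to run the same energy argument as in Theorem~\ref{thm:convg2}, but to exploit that, with step sizes bounded away from $0$ and from $1/L$, the per-step decrease in $\hat\Phi$ dominates a fixed positive multiple of $\sum_i\EE\|\tilde\vh_i^k\|^2$. First I note that the one-step inequality~\eqref{eq:dec2} from the proof of Theorem~\ref{thm:convg2} is valid for \emph{every} $k$ here, since its derivation only needs $\alpha_i^k<\frac1L$ for all $i$, which is guaranteed by $\sup_k\alpha_i^k<\frac1L$. Taking expectations in~\eqref{eq:dec2} and using~\eqref{eq:var} gives
$$\EE\hat\Phi(\vx^{k+1})-\EE\hat\Phi(\vx^k)\le\sum_{i=1}^s\frac{\alpha_i^k}{1-L\alpha_i^k}\,\sigma_k^2-\sum_{i=1}^s\frac{\alpha_i^k(1-L\alpha_i^k)}{4}\,\EE\|\tilde\vh_i^k\|^2 .$$
Writing $\underline\alpha=\min_i\inf_k\alpha_i^k>0$ and $\overline\alpha=\max_i\sup_k\alpha_i^k<\frac1L$, the coefficient of $\EE\|\tilde\vh_i^k\|^2$ is at least $a:=\frac14\underline\alpha(1-L\overline\alpha)>0$, and the coefficient of $\sigma_k^2$ is at most $b:=\frac{\overline\alpha}{1-L\overline\alpha}<\infty$. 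Summing over $k=1,\dots,K$, using that $\hat\Phi$ is bounded below (Assumption~\ref{assump2}) and that feasibility of the iterates keeps $\hat\Phi(\vx^1)$ finite, and then letting $K\to\infty$ together with the new hypothesis $\sum_k\sigma_k^2<\infty$, I obtain $\sum_{k=1}^\infty\EE\|\tilde\vh_i^k\|^2<\infty$ for every $i$, hence $\lim_{k\to\infty}\EE\|\tilde\vh_i^k\|^2=0$ for all $i$, now along the \emph{whole} sequence. This is the crux of the improvement: in Theorem~\ref{thm:convg2} the weight multiplying $\EE\|\tilde\vh_i^k\|^2$ was of order $\beta_k\to0$, so only $\sum_k\beta_k\EE\|\tilde\vh_i^k\|^2<\infty$ was available and Lemma~\ref{lem:seq} gave a mere $\liminf$; here the weight stays bounded below, so a summable series forces the terms themselves to vanish.

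The remaining step is to transfer this into the optimality measure, reusing the estimate
$$\EE\big[\dist(\vzero,\partial\hat\Phi(\vx^{k+1}))^2\big]\le 4\sum_{i=1}^s\Big(\EE\|\tilde\vh_i^k\|^2+\EE\|\nabla_{\vx_i}F(\vx^{k+1})-\vg_i^k\|^2+\EE\|\bm\delta_i^k\|^2\Big)$$
established near the end of the proof of Theorem~\ref{thm:convg2}. The first term vanishes by the previous paragraph. For the middle term, $\vx_j^{k+1}-\vx_j^k=-\alpha_j^k\tilde\vh_j^k$ and~\eqref{eq:lipobj} give $\|\nabla_{\vx_i}F(\vx^{k+1})-\vg_i^k\|^2\le L^2\overline\alpha^2\sum_{j\ge i}\|\tilde\vh_j^k\|^2$, whose expectation also tends to $0$. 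The last term obeys $\EE\|\bm\delta_i^k\|^2\le\sigma_k^2\to0$ since $\sum_k\sigma_k^2<\infty$. Therefore $\EE[\dist(\vzero,\partial\hat\Phi(\vx^k))^2]\to0$, and Jensen's inequality yields $\EE[\dist(\vzero,\partial\hat\Phi(\vx^k))]\le(\EE[\dist(\vzero,\partial\hat\Phi(\vx^k))^2])^{1/2}\to0$, which is the claim.

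I do not anticipate a genuine obstacle: the corollary is a quantitative sharpening of Theorem~\ref{thm:convg2}, not a new idea. The only points that need care are (i) checking that every coefficient appearing in~\eqref{eq:dec2} is uniformly controlled under the constant-order step-size regime $\underline\alpha\le\alpha_i^k\le\overline\alpha<\frac1L$ (the decrease weight $\tfrac{\alpha_i^k(1-L\alpha_i^k)}{4}$ bounded below, the noise weight $\tfrac{\alpha_i^k}{1-L\alpha_i^k}$ bounded above), so that the telescoped sum is finite and delivers a convergent series rather than just $\liminf=0$; and (ii) recording that the iterates stay feasible — automatic for the blocks with $\cX_i=\RR^{n_i}$ and enforced by the projection in~\eqref{eq:update2} for the others — so that $\hat\Phi(\vx^k)=\Phi(\vx^k)$ is finite throughout and the telescoping is legitimate. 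Everything else is exactly the argument already carried out for Theorems~\ref{thm:convg1} and~\ref{thm:convg2}.
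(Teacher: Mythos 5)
Your proposal is correct and follows essentially the same route as the paper: the paper's own proof simply observes that under the constant-order stepsizes and summable $\sigma_k^2$, inequality \eqref{eq:dec2} telescopes so that the subsequence $\cK$ in Theorem \ref{thm:convg2} can be taken as the whole sequence. Your write-up just makes explicit the uniform bounds on the coefficients in \eqref{eq:dec2} and replaces the theorem's ``$\beta_k\to 0$'' argument for the term $\EE\|\nabla_{\vx_i}F(\vx^{k+1})-\vg_i^k\|^2$ with the direct bound $L^2\overline{\alpha}^2\sum_{j}\EE\|\tilde{\vh}_j^k\|^2\to 0$, details the paper leaves implicit.
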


\begin{proof}
Under the conditions that $0<\inf_k\alpha_i^k\le\sup_k\alpha_i^k<\frac{1}{L},\forall i$ and $\sum_{k=1}^\infty \sigma_k^2<\infty$, it follows from \eqref{eq:dec2} that the subsequence $\cK$ in the proof of Theorem \ref{thm:convg2} can be taken as the whole sequence. Noting $\sigma_k\to 0$ as $k\to\infty$, we immediately get the desired result from Theorem \ref{thm:convg2}.
\end{proof}

\begin{remark}\label{rm:var}
One way to make $\sum_{k=1}^\infty\sigma_k^2<\infty$ is to asymptotically increase $m_k$ at a sufficiently fast rate. Let
$$\bm{\delta}_{i,\ell}^k=\nabla_{\vx_i}f(\vx_{<i}^{k+1},\vx_{\ge i}^k;\xi_{k,\ell})-\vg_i^k.$$
Then, $\bm{\delta}_i^k=\frac{1}{m_k}\sum_{\ell=1}^{m_k}\bm{\delta}_{i,\ell}^k$. As in Assumption \ref{assump1}, assume $\|\EE[\bm{\delta}_{i,\ell}^k|\bm{\Xi}_{[k-1]}]\|\le A\cdot\max_j\alpha_j^k$ and also $\EE\|\bm{\delta}_{i,\ell}^k\|^2\le \sigma^2$ for some constants $A$ and $\sigma$. Then following the proof of \cite{ghadimi2013mini} on page 11, one can show that $\EE\|\bm{\delta}_i^k\|^2\le \cO(1/m_k)$. Hence, taking $m_k=\lceil k^{1+\epsilon}\rceil$ for any $\epsilon>0$ guarantees $\sum_{k=1}^\infty\sigma_k^2<\infty$, where $\lceil a\rceil$ denotes the smallest integer that is no less than $a$.
\end{remark}

\section{Numerical experiments}\label{sec:numerical}

In this section, we report the simulation results of Algorithm \ref{alg:bsg}, dubbed as BSG, on both convex and nonconvex problems to demonstrate its advantages. The tested convex problems include stochastic least squares \eqref{eq:ls} and logistic regression \eqref{eq:lreg}. The tested nonconvex problems include low-rank tensor recovery \eqref{eq:lrtc} and bilinear logistic regression \eqref{eq:blr}.

\subsection{Parameter settings}
For convex problems \eqref{eq:ls} and \eqref{eq:lreg}, we let $(\pi_1^k,\ldots,\pi_s^k)$ be a \emph{random shuffling} of $(1,\ldots,s)$ and set the stepsize of BSG to $\alpha_i^k=\min(\frac{\theta}{\sqrt{k}},\frac{1}{L_i^k}),~\forall i,k$, where the value of $\theta$ was specified in each test, and $L_i^k$ was the Lipschitz constant of
\begin{equation}\label{eq:appf}\frac{1}{m_k}\sum_{\ell=1}^{m_k}\nabla_{\vx_{\pi_i^k}} f(\vx_{\pi_{<i}^k}^{k+1},\vx_i,\vx_{\pi_{>i}^k}^k;\xi_{k,\ell})
\end{equation} with respect to $\vx_i$. In addition, each $\xi_{k,\ell}$ was generated uniformly at random, and $m_k$ was set the same for all $k$ and specified below. We treat each coordinate as a block, i.e., $s=n$. Within each iteration, although BSG requires computing block partial gradient $s$ times, we need very little extra computation (with complexity $\cO(1)$) to get the new partial gradient from the previous one due to cyclic update and thus greatly save the computing time.

For nonconvex problems \eqref{eq:lrtc} and \eqref{eq:blr}, we treat each factor matrix as a block, i.e., $s=3$ for \eqref{eq:lrtc} and $s=2$ for \eqref{eq:blr}. We used the fixed updated order by letting $\pi_i^k=i,\,\forall i,k$, and set $\alpha_i^k=\min(\frac{\theta}{\sqrt{k}\log k},\frac{1}{L_i^k})$ for smooth nonconvex problems, i.e., problems \eqref{eq:lrtc} with $\lambda=0$ and \eqref{eq:blr}, and  set $\alpha_i^k=\frac{1}{L_i^k}$ for non-smooth nonconvex ones, i.e., problem \eqref{eq:lrtc} with $\lambda>0$, where $L_i^k$ was the Lipschitz constant of \eqref{eq:appf}. In addition, for smooth nonconvex cases, $m_k$ was set the same for all $k$, and for non-smooth nonconvex case, we asymptotically increased it by $m_k=m_1+\lceil\frac{k-1}{10}\rceil,~\forall k$.

We compared BSG with the SG (stochastic gradient) method and  the SBMD (stochastic block mirror descent) method \cite{dang2013stochastic} on problems \eqref{eq:ls} and \eqref{eq:lreg}. Stepsize $\alpha_k$ for SG and SBMD was set in the same way as $\alpha_i^k$ in the above. Specifically, $\alpha_k=\min(\frac{\theta}{\sqrt{k}},\frac{1}{L_k})$, where $L_k$ was the Lipschitz constant of $\frac{1}{m_k}\sum_{\ell=1}^{m_k}\nabla_{\vx} f(\vx;\xi_{k,\ell})$ for SG and $\frac{1}{m_k}\sum_{\ell=1}^{m_k}\nabla_{\vx_{i_k}} f(\vx_{\neq i_k}^{k},\vx_{i_k};\xi_{k,\ell})$ with respect to the selected block $\vx_{i_k}$ for SBMD. On solving \eqref{eq:lrtc} and \eqref{eq:blr}, we compared BSG with the BCGD (block coordinate gradient descent) method \cite{TsengYun2009}, whose stepsize $\lambda_i^k$ was taken as the reciprocal of the Lipschitz constant of $\nabla_{\vx_i}F(\vx_{<i}^{k+1},\vx_i,\vx_{>i}^k)$ with respect to $\vx_i$ for all $i$ and $k$. Throughout our tests, all compared algorithms were supplied with the same randomly generated starting point.

\subsection{Stochastic least squares}
We tested BSG, SG, and SBMD on the problem:
\begin{equation}\label{eq:ls}
\min_\vx \EE_{\va,b} \frac{1}{2}(\va^\top\vx-b)^2,
\end{equation}
where $\va$ and $b$ were random variables. In this test, entries of $\va$ independently followed the standard Gaussian distribution, and $b=\va^\top\hat{\vx}+\eta$ where $\eta$ was independent of $\va$ and followed the Gaussian distribution $\cN(0,0.01)$, and $\hat{\vx}\in\RR^{200}$ was a deterministic vector. It is easy to show that $\hat{\vx}$ was the solution to \eqref{eq:ls}, and the optimal objective value was $0.005$. We first generated a Gaussian random vector $\hat{\vx}$. Then we generated $N$ samples of $\va$ and $b$ according to their distributions, one at a time, and for each sample, we performed one update of the three algorithms, i.e., $m_k=1$ in \eqref{eq:appf}. All three algorithms started from the same Gaussian randomly generated point and used $\theta=0.1$. To compare their solutions, we generated another 100,000 samples $(\va,b)$ following the same distribution and calculated the empirical loss. The entire process was repeated 100 times independently, and average empirical losses were shown in Table \ref{table:ls} for different $N$'s. ``SBMD-t'' denotes SBMD algorithm that independently selected $t$ coordinates at each iteration. Since ``SBMD-200'' becomes SG method, its results were identical to those of SG and thus not reported. From the results, we see that SBMD performed consistently better by updating more coordinates at each iteration. BSG was better than SG except for $N=4000$. Note that SBMD only renewed partial coordinates at each update and thus took less computing time.

\begin{table}\caption{Objective values of BSG, SG, and SBMD on solving stochastic least squares \eqref{eq:ls} with data following the Gaussian distribution. \textbf{Bold numbers} are best.}\label{table:ls}
\begin{center}
{\footnotesize
\begin{tabular}{|c||c|c|c|c|c|}\hline
$N$ (Total Samples) & BSG & SG & SBMD-10 & SBMD-50 &SBMD-100\\\hline
4000 & 6.45e-3 & \textbf{6.03e-3} & 67.49 & 4.79 & 1.03e-1 \\
6000 & \textbf{5.69e-3} & 5.79e-3 & 53.84 & 1.43 & 1.43e-2 \\
8000 & \textbf{5.57e-3} & 5.65e-3 & 42.98 & 4.92e-1 & 6.70e-3 \\
10000 & \textbf{5.53e-3} & 5.58e-3 & 35.71 & 2.09e-1 & 5.74e-3\\\hline
\end{tabular}}
\end{center}
\end{table}

\subsection{Logistic regression}\label{sec:logreg}
We tested BSG, SG, and SBMD on the  problem:
\begin{equation}\label{eq:lreg}
\min_{\vw,b} \frac{1}{N}\sum_{\ell=1}^N\log(1+\exp(-y_\ell(\vx_\ell^\top\vw+b)).
\end{equation}
First, we compared the three algorithms on synthetic data. We randomly generated $N=2000$ samples of dimension 200 with  half of them belonging to the ``$+1$'' class and the other half to the ``$-1$'' class. Each sample in the positive class has components independently sampled from the Gaussian distribution $\cN(5, 1)$ and those in the negative class from $\cN(-5,1)$. We ran BSG, SG, and SBMD each to 50 epochs or 2 seconds, where one epoch was equivalent to going through all samples once. At each iteration of the algorithms, we uniformly randomly selected one sample, i.e., $m_k=1$ in \eqref{eq:appf}. Three different values of $\theta$ were tested. Figure \ref{fig:lreg-rand} plots the gap between the optimal objective value and those given by different algorithms for solving \eqref{eq:lreg}, where the optimal objective value was accurately obtained by running FISTA \cite{BeckTeboulle2009} to 5,000 iteration. From the figure, we see that when $\theta$ was small, SBMD performed consistently better if more coordinates were updated at each iteration, and when $\theta$ was large, its performance was almost irrelevant to the numbers of updated coordinates. In addition, the proposed BSG method performed the best, and it reached a much lower objective within the same number of epochs or the same amount of running time, especially when a large $\theta$ was used. With respect to running time, the worse performance of SBMD-1 compared to BSG is possibly because SBMD-1 used all coordinates to evaluate every partial gradient (with complexity $\cO(n)$) while BSG used all coordinates only for the first partial gradient and then just the renewed coordinate for all other partial gradients (each with complexity $\cO(1)$) due to cyclic update.

\begin{figure}\caption{Objective values of BSG, SG, and SBMD for solving logistic regression \eqref{eq:lreg} on Gaussian randomly generated samples.}\label{fig:lreg-rand}
\centering
\begin{tabular}{ccc}
$\theta=0.1$ & $\theta = 1$  & $\theta = 10$\\[-0.2cm]
\includegraphics[width = 0.31\textwidth]{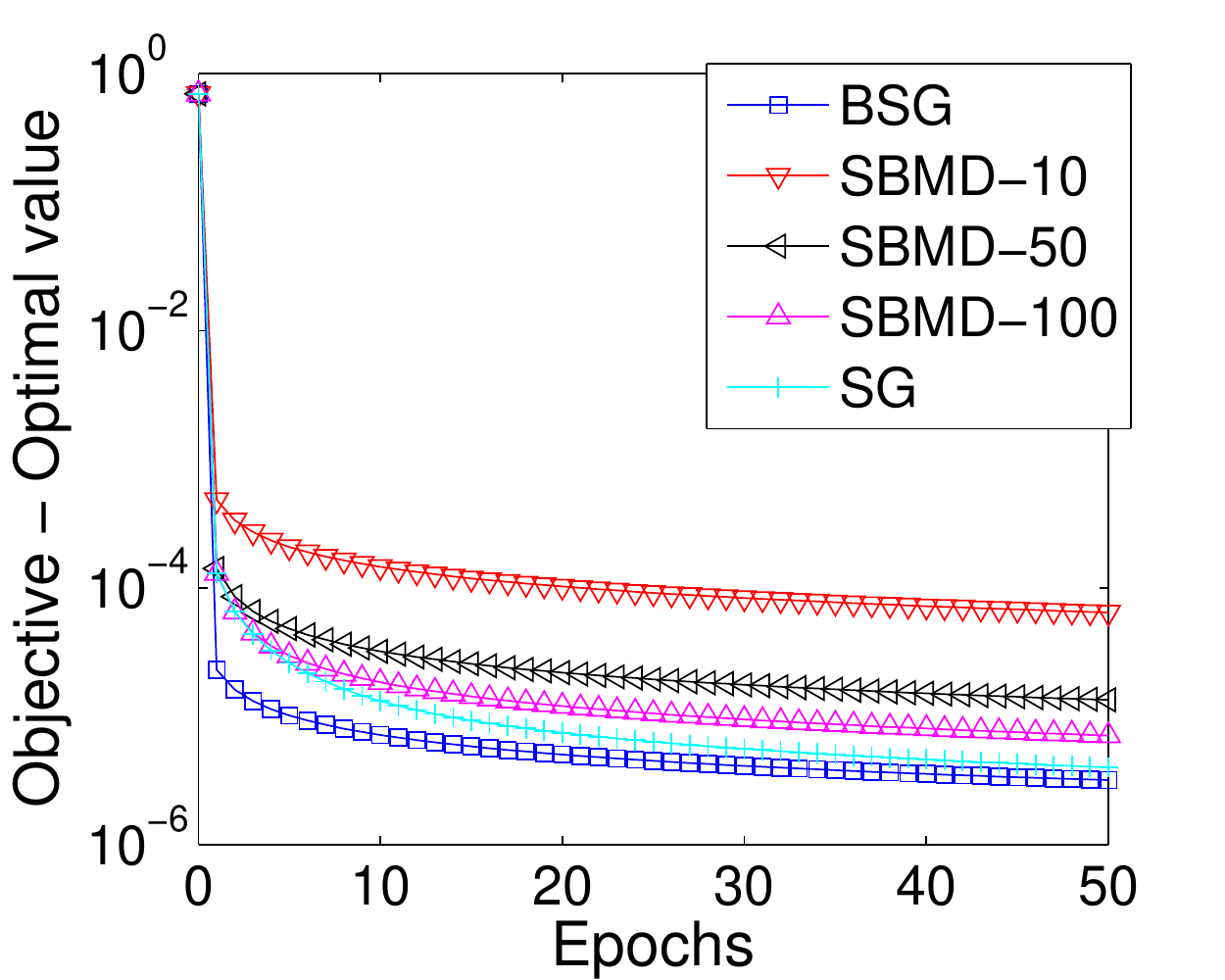}&
\includegraphics[width = 0.31\textwidth]{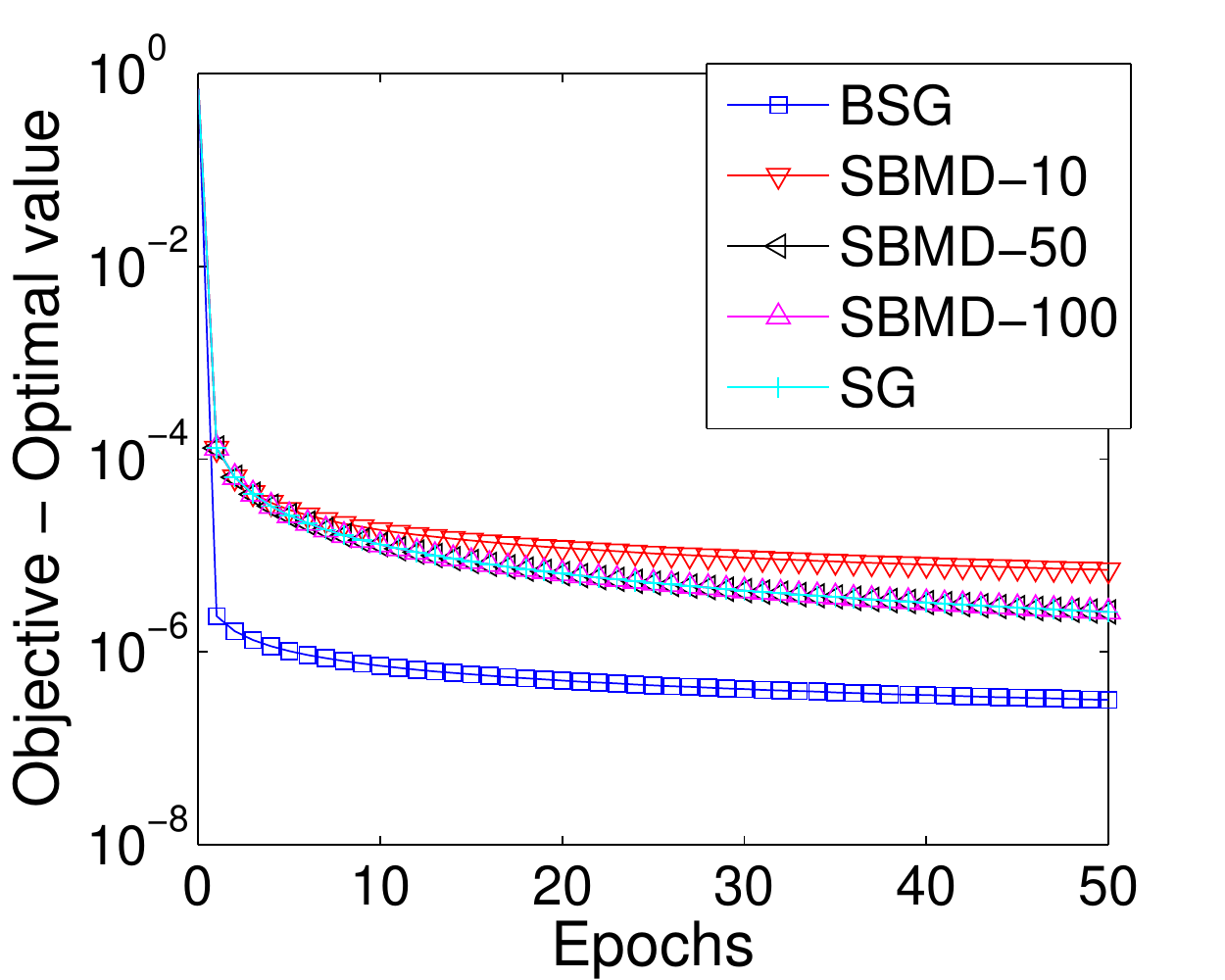}&
\includegraphics[width = 0.31\textwidth]{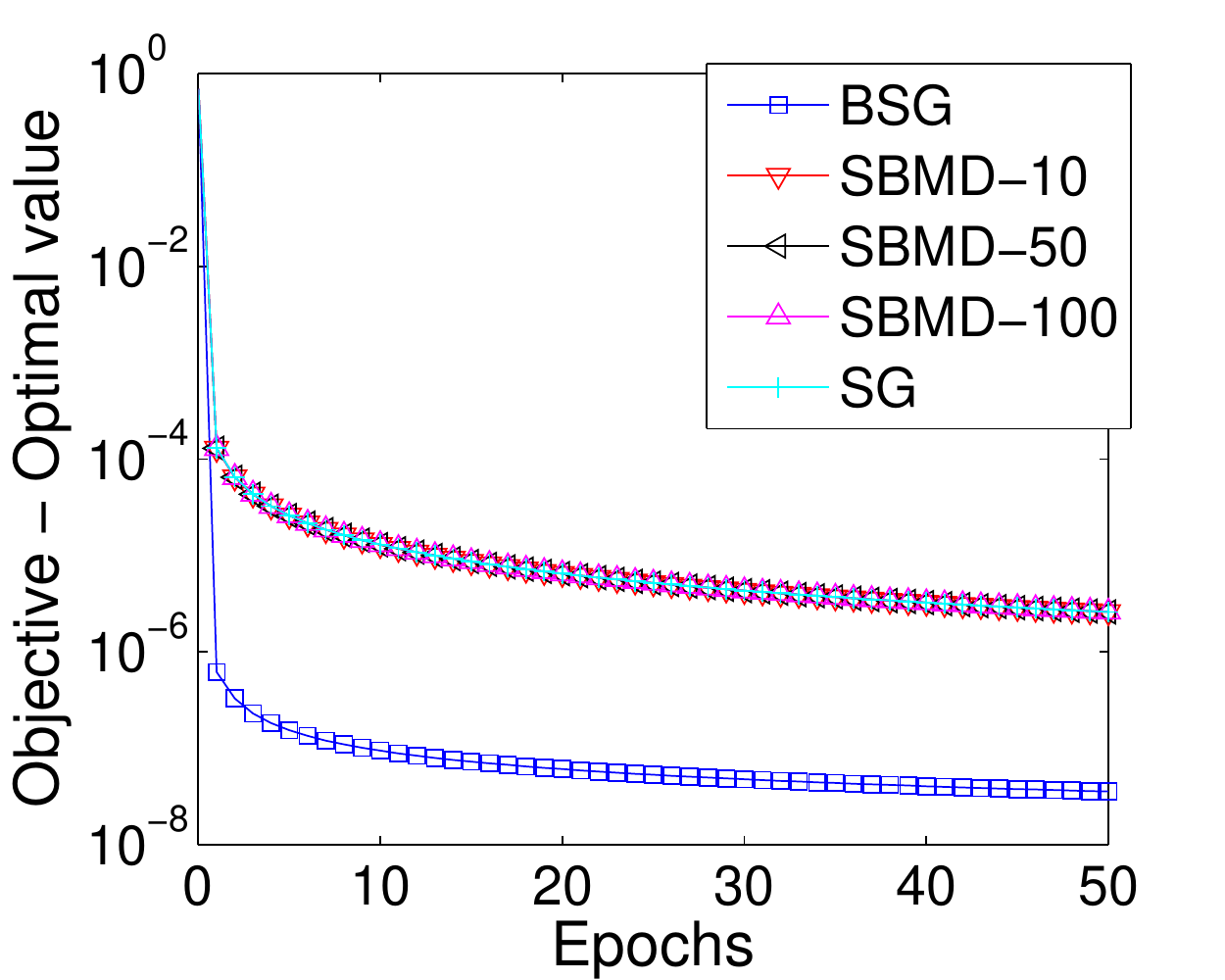}\\
\includegraphics[width = 0.31\textwidth]{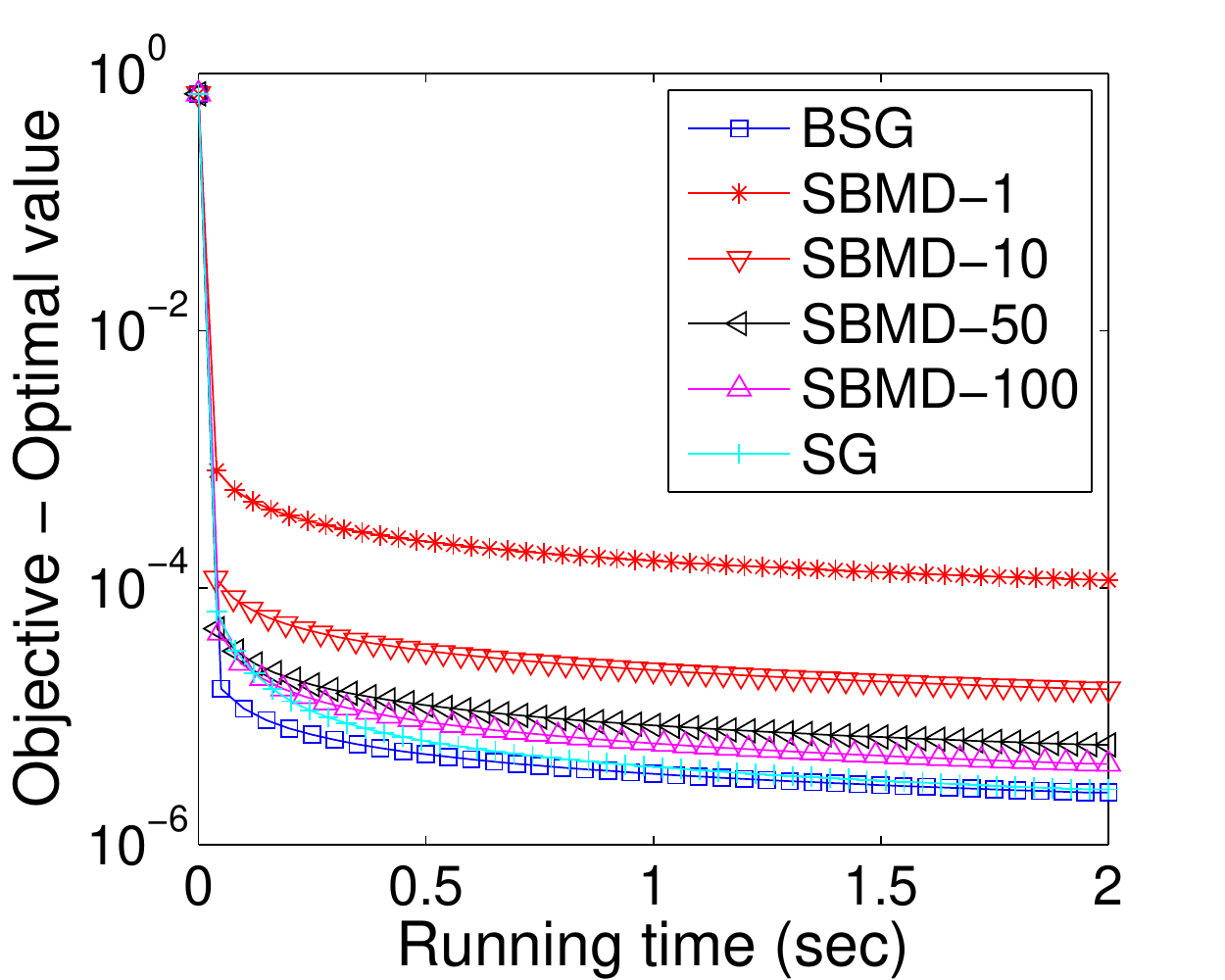}&
\includegraphics[width = 0.31\textwidth]{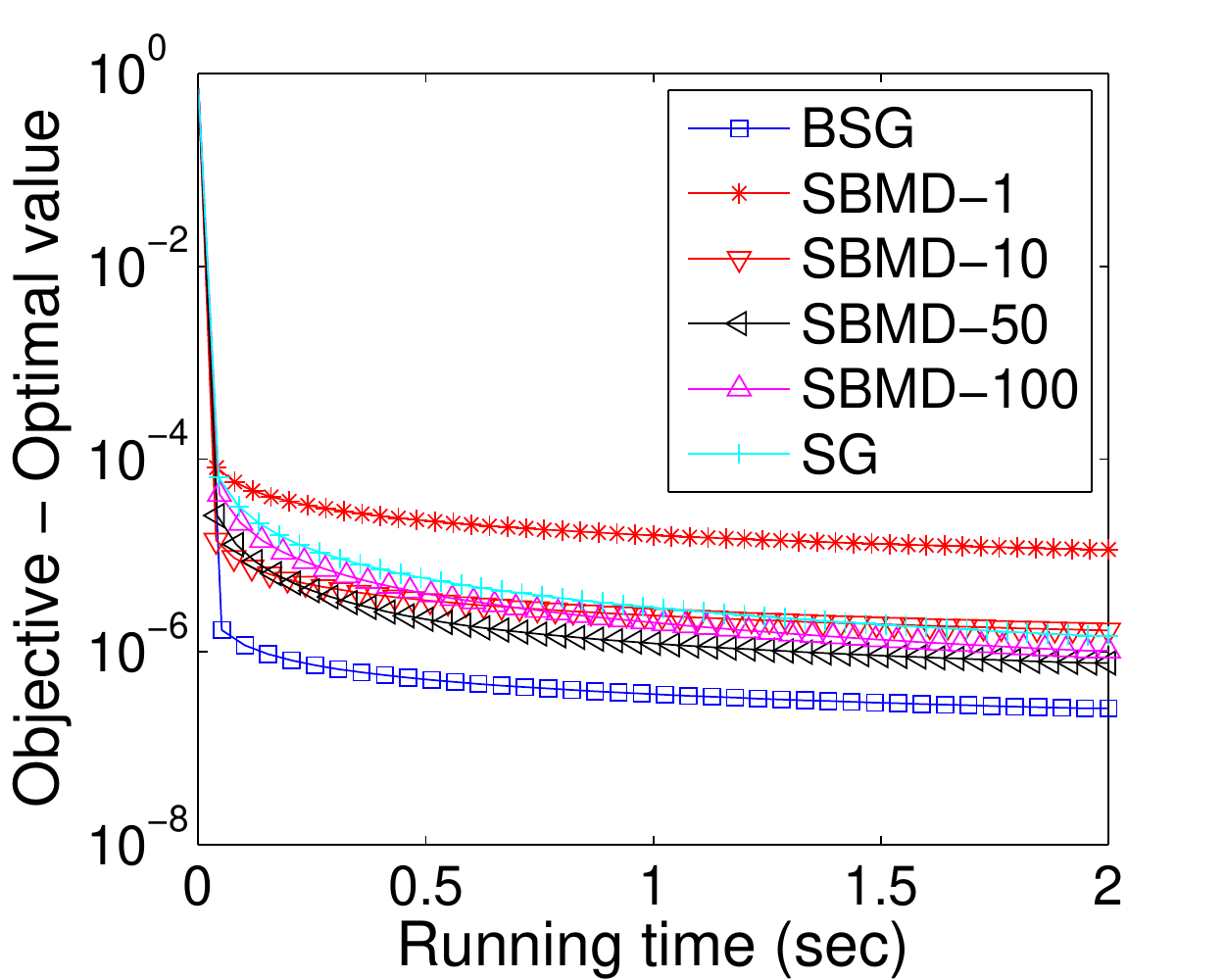}&
\includegraphics[width = 0.31\textwidth]{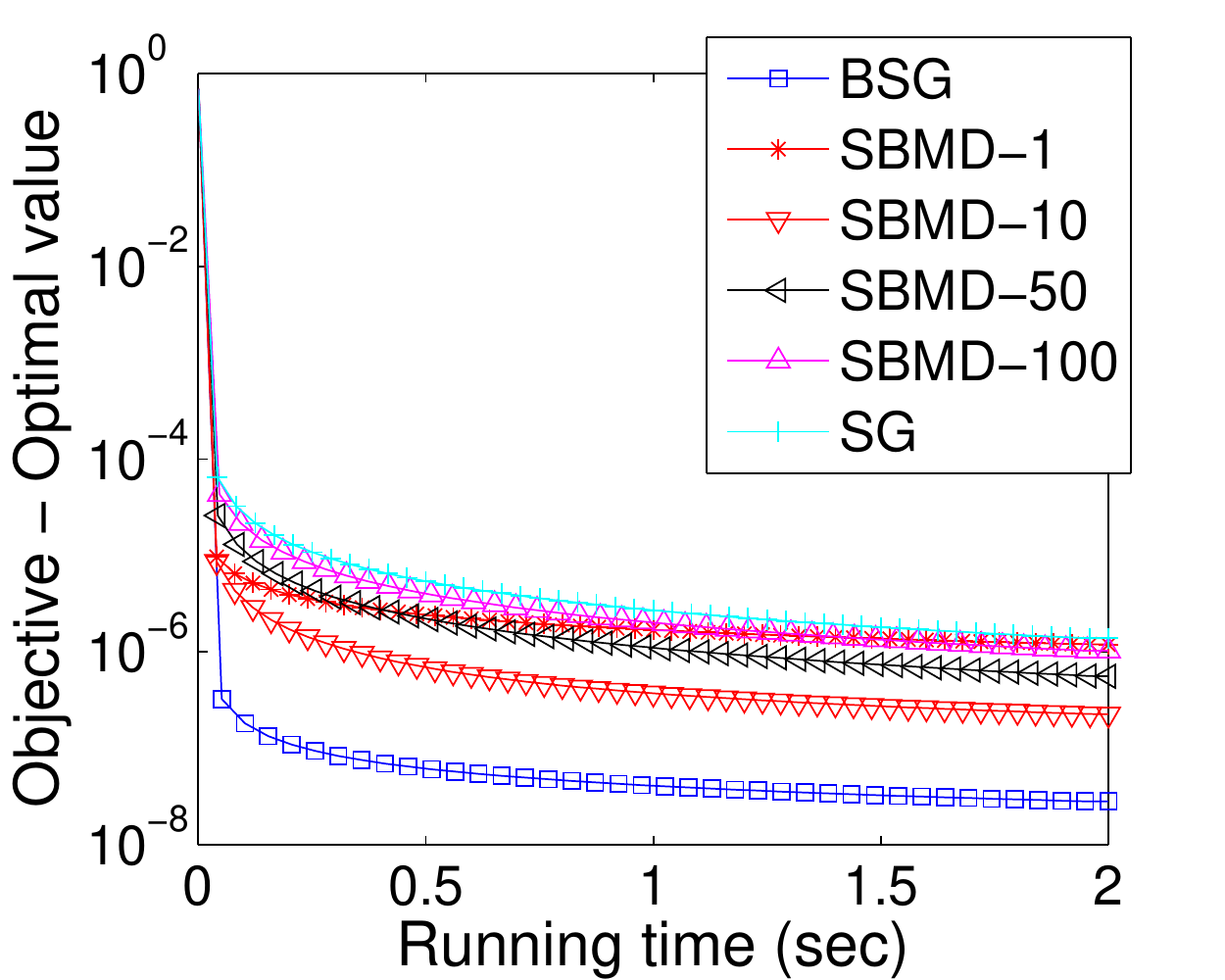}
\end{tabular}
\end{figure}

Secondly, we compared the three algorithms on the \textbf{gisette} dataset\footnote{Available from http://www.csie.ntu.edu.tw/$\sim$cjlin/libsvmtools/datasets/}, which has 6,000 training samples of dimension 5,000. At each iteration, SBMD updated 1,  1,000 or 3,000 coordinates. Figure \ref{fig:lreg-gisette} plots the results of the three algorithms on problem \eqref{eq:lreg} with three different values of $\theta$. From the figures, we see that when $\theta=0.1$, BSG performed almost the same as SG with respect to the number of epochs, and they both outperformed SBMD within the same number of epochs. When $\theta$ becomes larger, BSG reaches much lower objective values than SG and SBMD, and the latter two performed almost the same except SBMD-1.  

\begin{figure}\caption{Objective values of BSG, SG, and SBMD for solving logistic regression \eqref{eq:lreg} on the \textbf{gisette} dataset.}\label{fig:lreg-gisette}
\centering
\begin{tabular}{ccc}
$\theta=0.1$ & $\theta = 1$  & $\theta = 10$\\[-0.2cm]
\includegraphics[width = 0.31\textwidth]{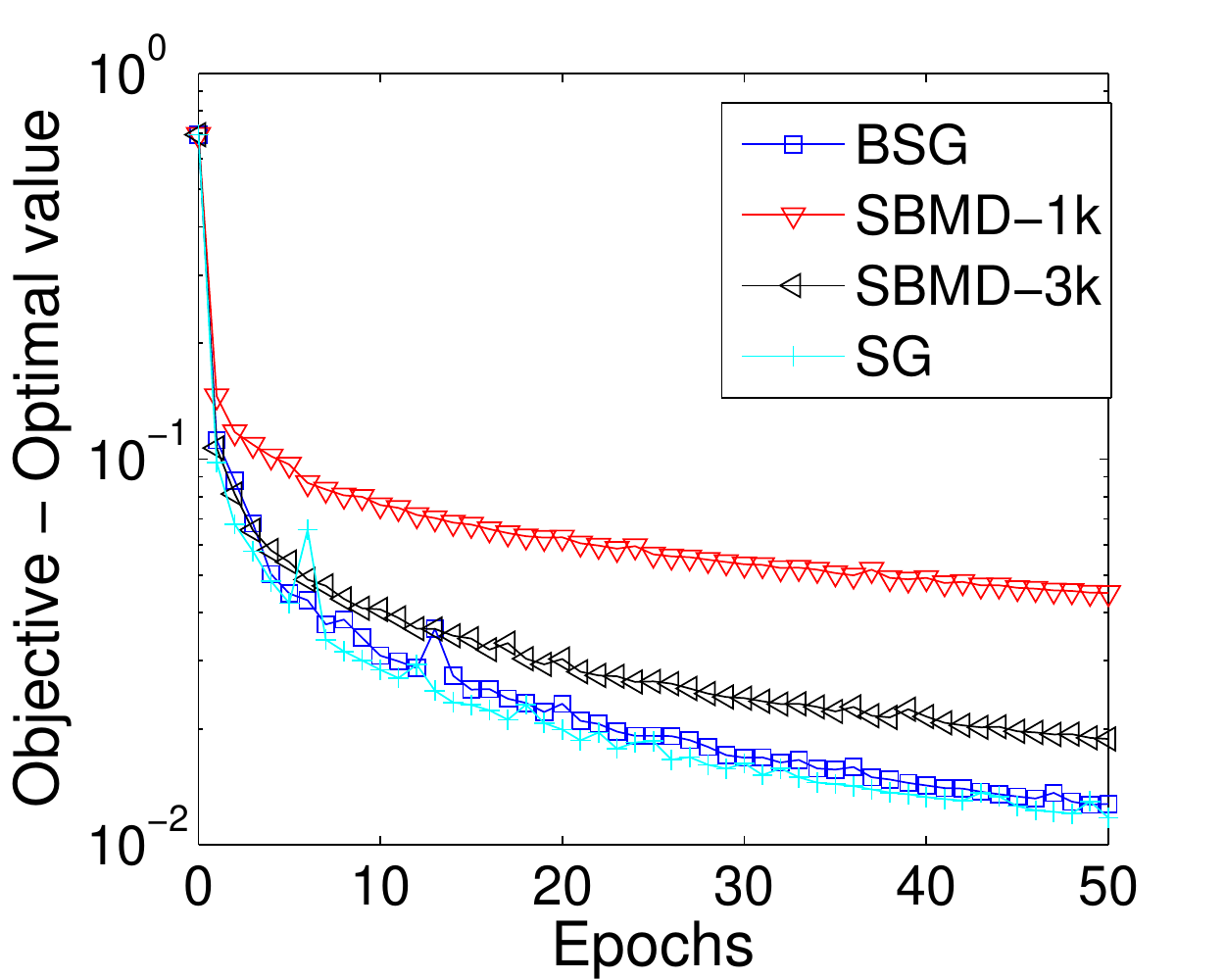}&
\includegraphics[width = 0.31\textwidth]{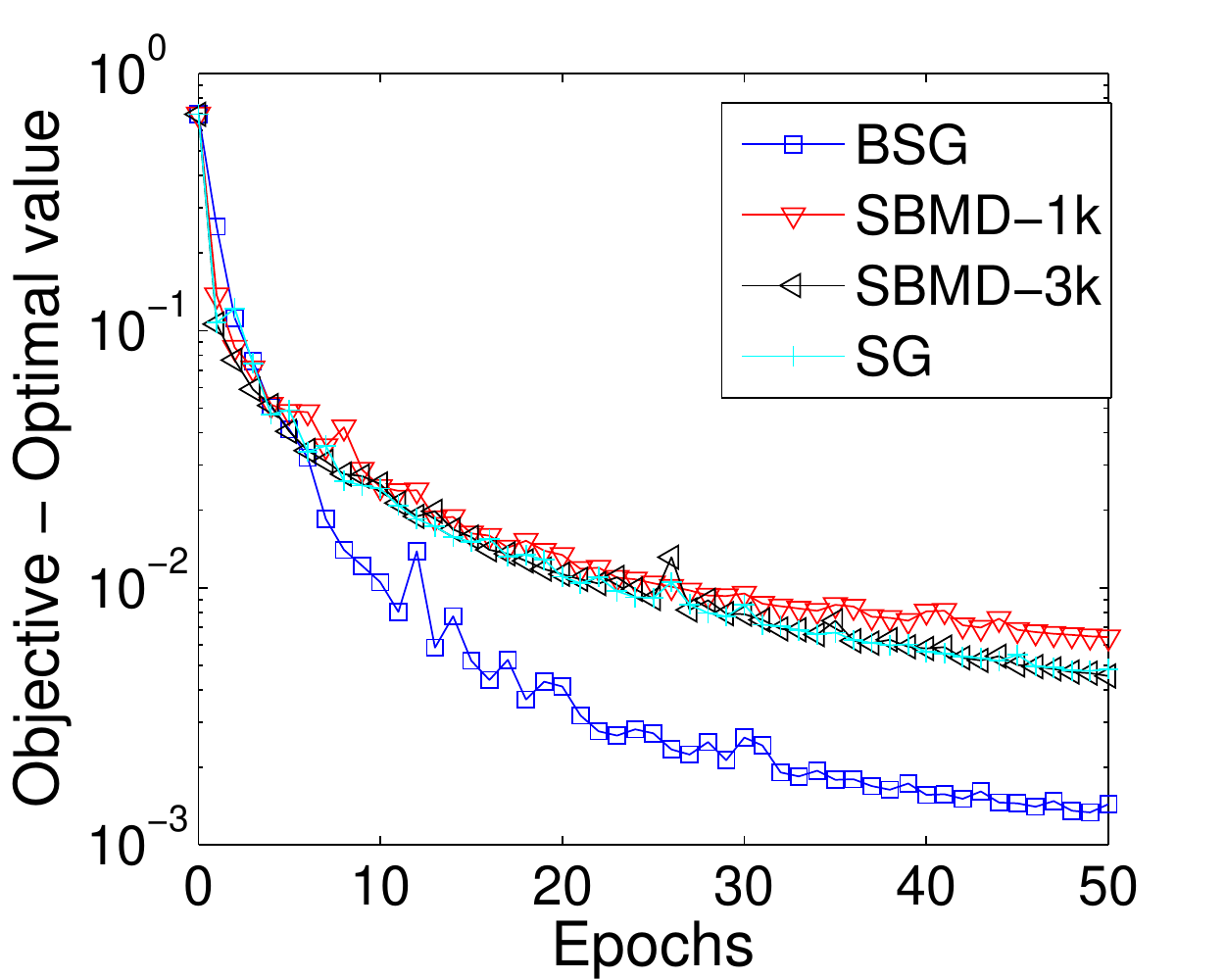}&
\includegraphics[width = 0.31\textwidth]{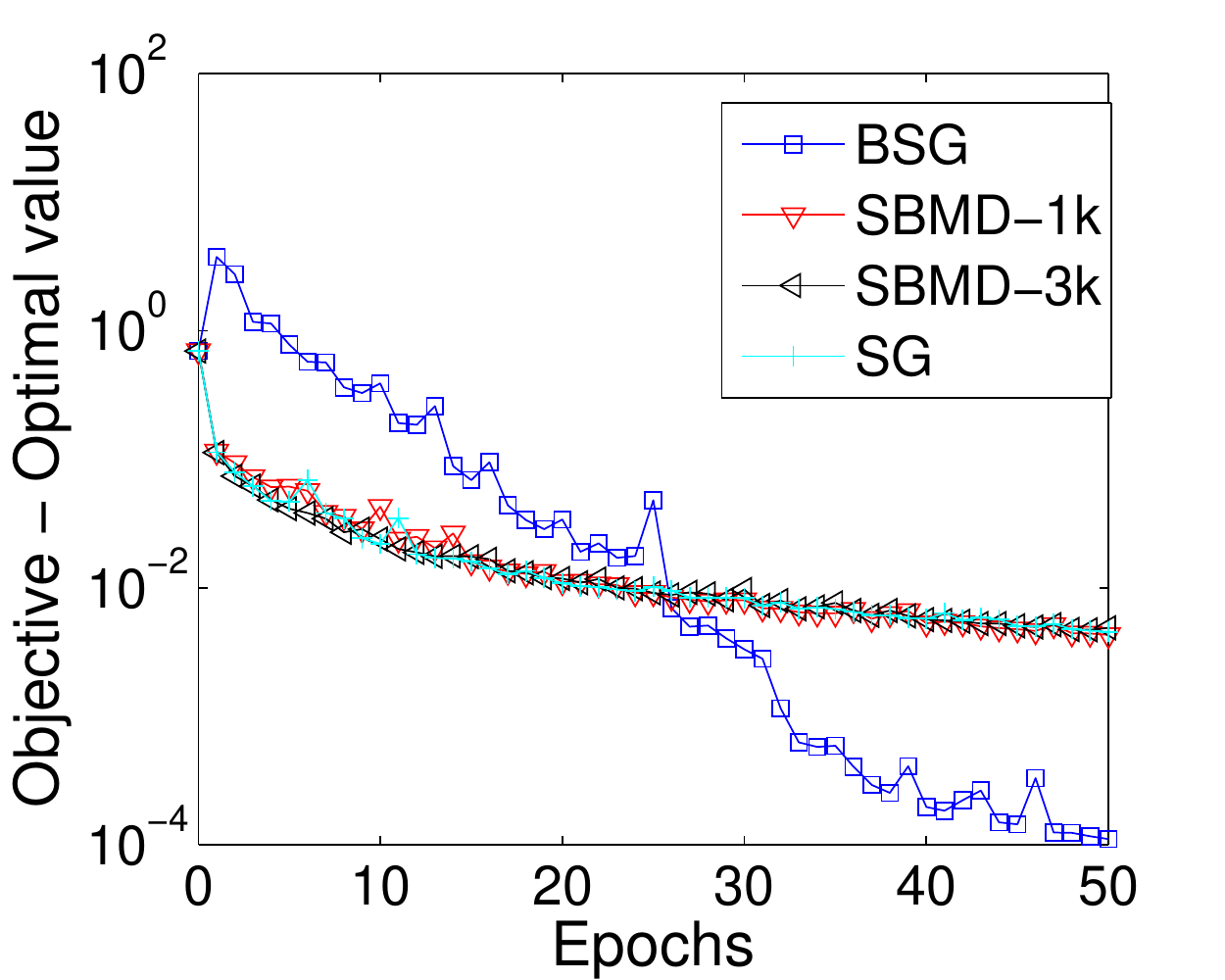}\\
\includegraphics[width = 0.31\textwidth]{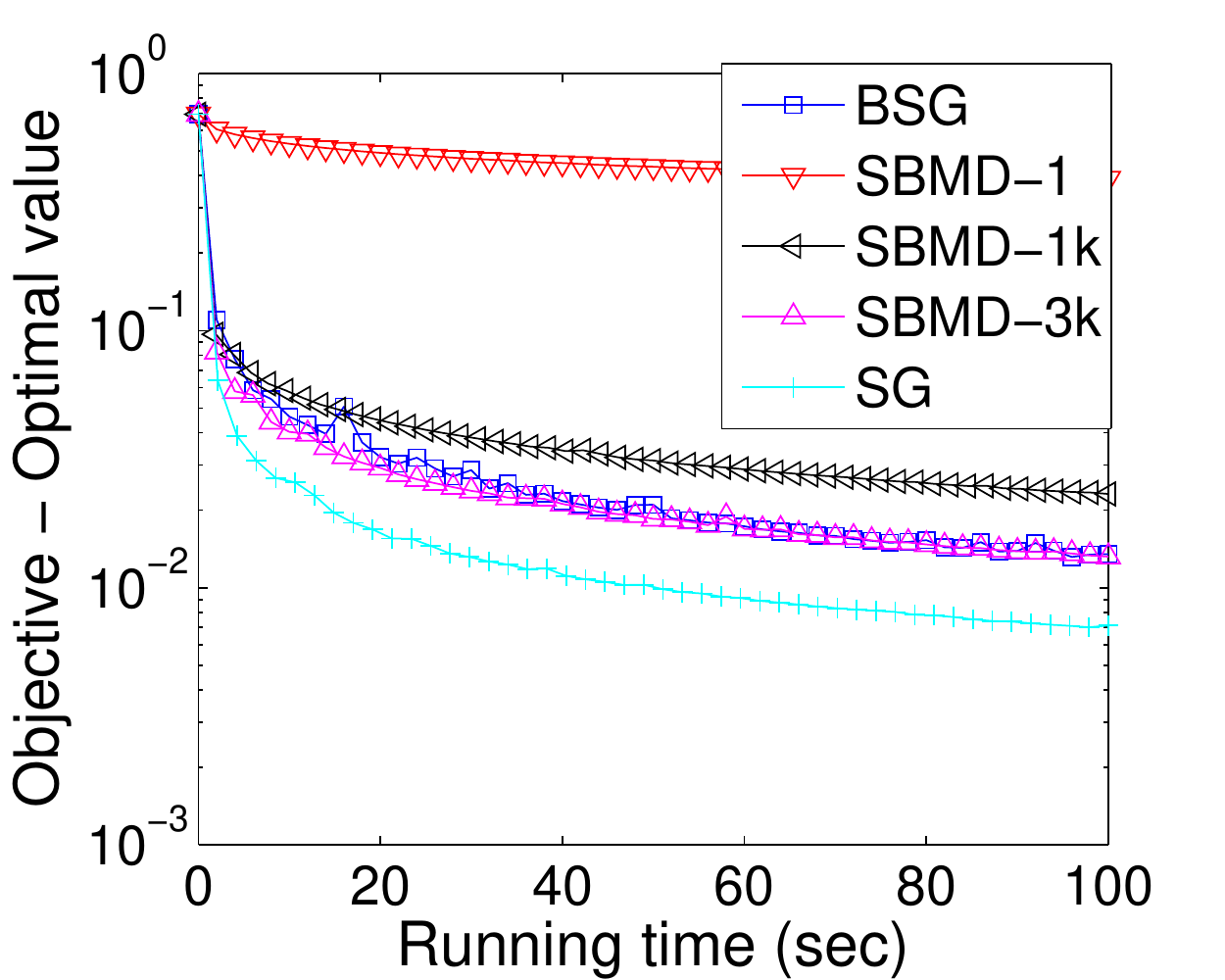}&
\includegraphics[width = 0.31\textwidth]{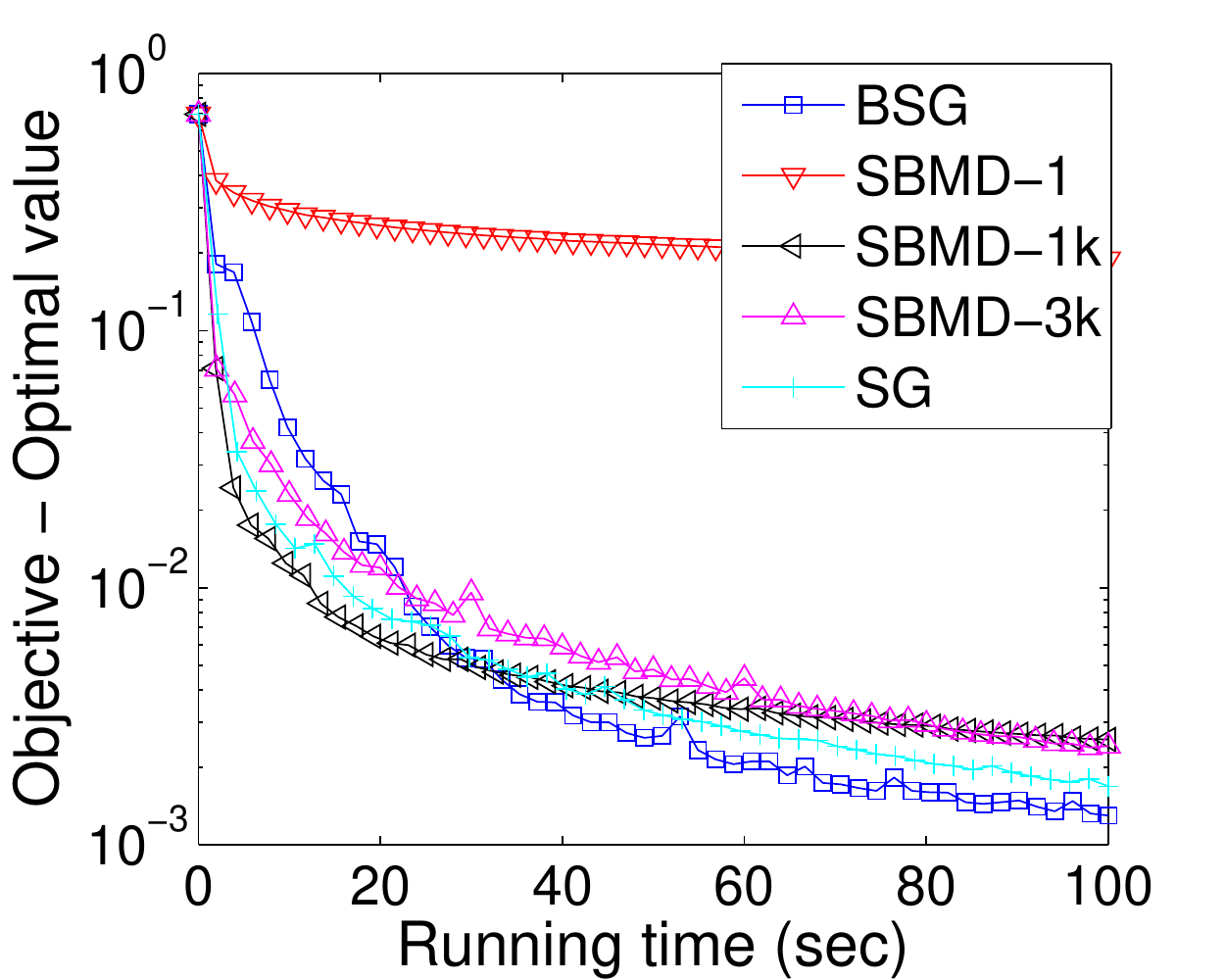}&
\includegraphics[width = 0.31\textwidth]{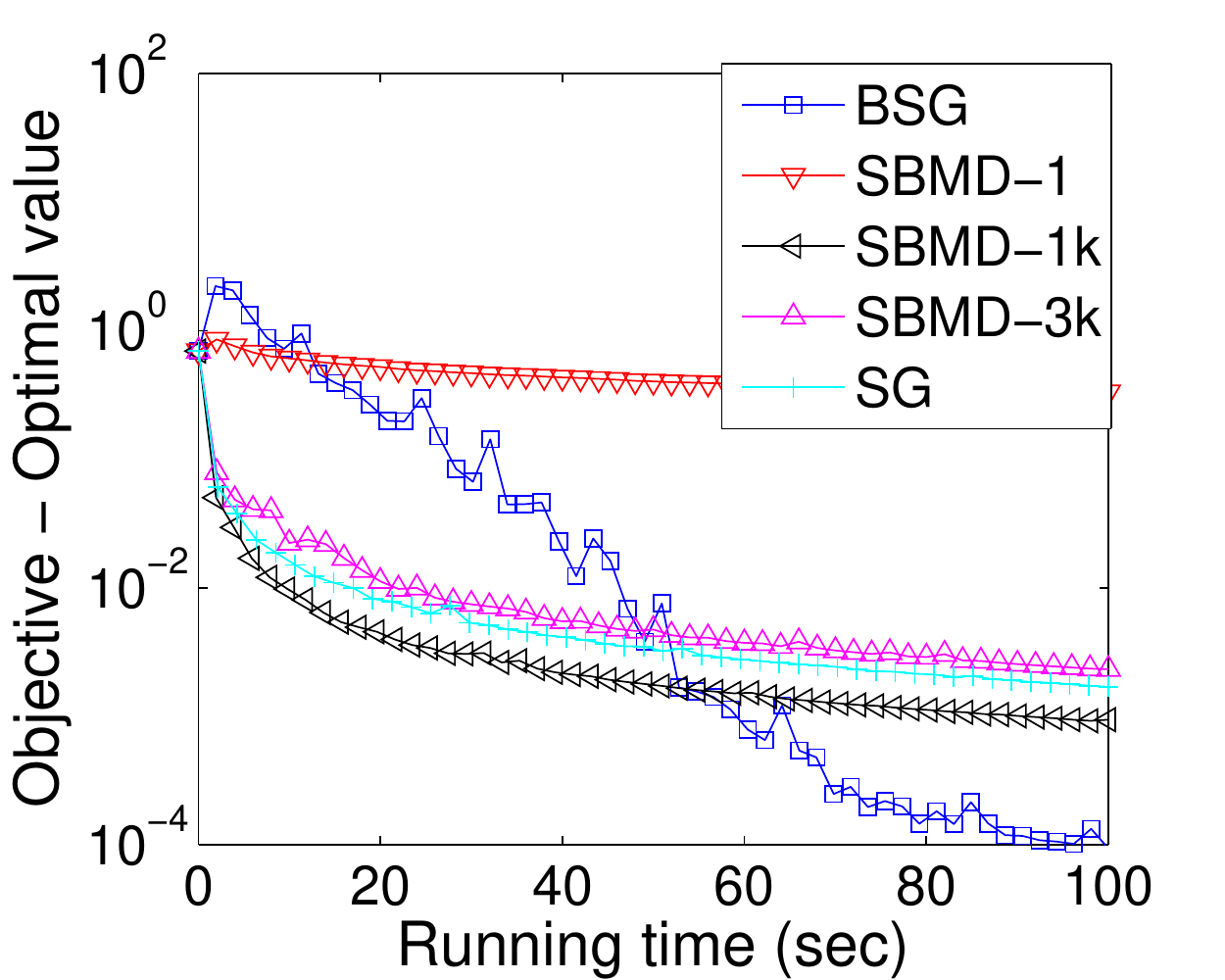}
\end{tabular}
\end{figure}


\subsection{Low-rank tensor recovery}
We compared BSG and BCGD on  the  problem:
\begin{equation}\label{eq:lrtc}
\min_\vX \frac{1}{2N}\sum_{\ell=1}^N(\cA_\ell(\vX_1\circ\vX_2\circ\vX_3)-b_\ell)^2+\sum_{i=1}^3\lambda\|\vX_i\|_1,
\end{equation}
where $b_\ell=\cA_\ell(\bm{\cM})=\langle \bm{\cG}_\ell,\bm{\cM}\rangle, \ell=1,\ldots,N$ with $\bm{\cM}$ being the underlying low-rank tensor. We generated each element of $\bm{\cG}_\ell$ according to the standard Gaussian distribution.
In Figure \ref{fig:3D-shape}, we tested BSG on recovering a low-rank tensor of size $60\times60\times60$ from $N=40,000$ Gaussian random measurements by solving \eqref{eq:lrtc} with $\lambda=0$. The original tensor had 10 middle slices of all \emph{one}'s along each mode and all the other elements to be \emph{zero}. For this test, the data size exceeded the memory of our workstation, and thus BCGD could not be tested.  Figure \ref{fig:3D-shape} plots the original tensor and the recovered one by BSG with 50 epochs and sample size $m_k=256,\forall k$. Its relative error is about 1.93\%.

In Figure \ref{fig:3crs}, we compared BSG and BCGD for solving
\eqref{eq:lrtc} on a smaller tensor of size $32\times 32\times32$. It has the same shape as those in Figure \ref{fig:3D-shape}, and it has 6 middle slices of all \emph{one}'s along each mode and all other elements to be \emph{zero}. We generated $N=15,000$ Gaussian random measurements. Since BSG and BCGD are both based on block coordinate update, they have almost the same\footnote{BSG has slightly higher complexity because of random sampling.} per-epoch complexity, and thus we only plot their objective values with respect to the number of epochs in Figure \ref{fig:3crs}. The left plot shows the objectives by BSG with $m_k=64,\forall k$ and BCGD for solving \eqref{eq:lrtc} with $\lambda=0$, and the right plot corresponds to $\lambda=\frac{1}{N}$ and $m_k=64+\lceil\frac{k-1}{10}\rceil,\forall k$ for BSG. From the figure, we see that BSG significantly outperformed BCGD in the beginning. In the smooth case, BCGD was trapped at some local solution, and in the nonsmooth case, BCGD eventually reached slightly lower objective than that of BSG.

\begin{figure}\caption{3D shape of the original $60\times60\times60$ tensor $\bm{\cM}$ (on the left) and the corresponding recovered one (on the right) by BSG.}\label{fig:3D-shape}
\centering
\begin{tabular}{cc}
\includegraphics[width = 0.35\textwidth]{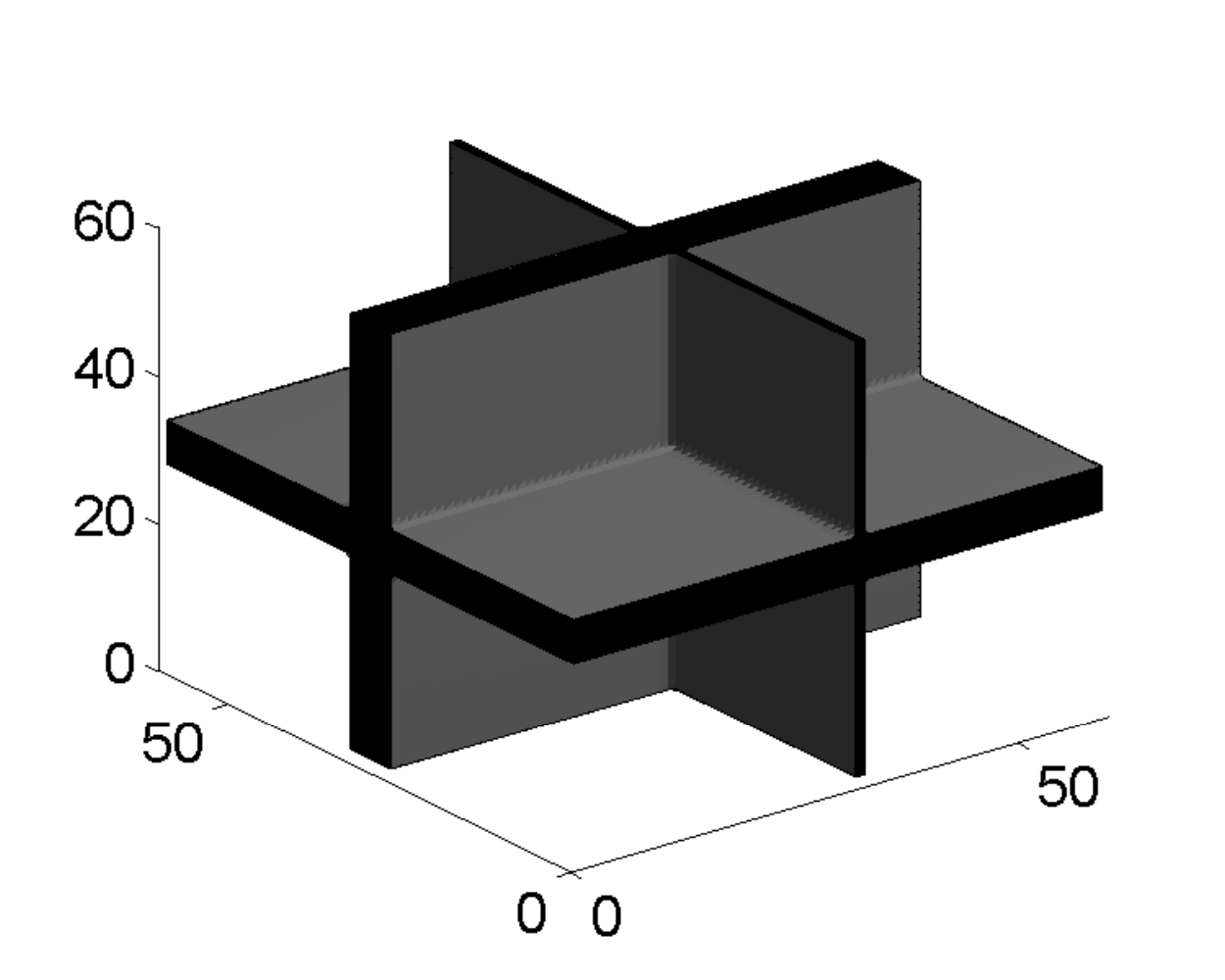} &
\includegraphics[width = 0.35\textwidth]{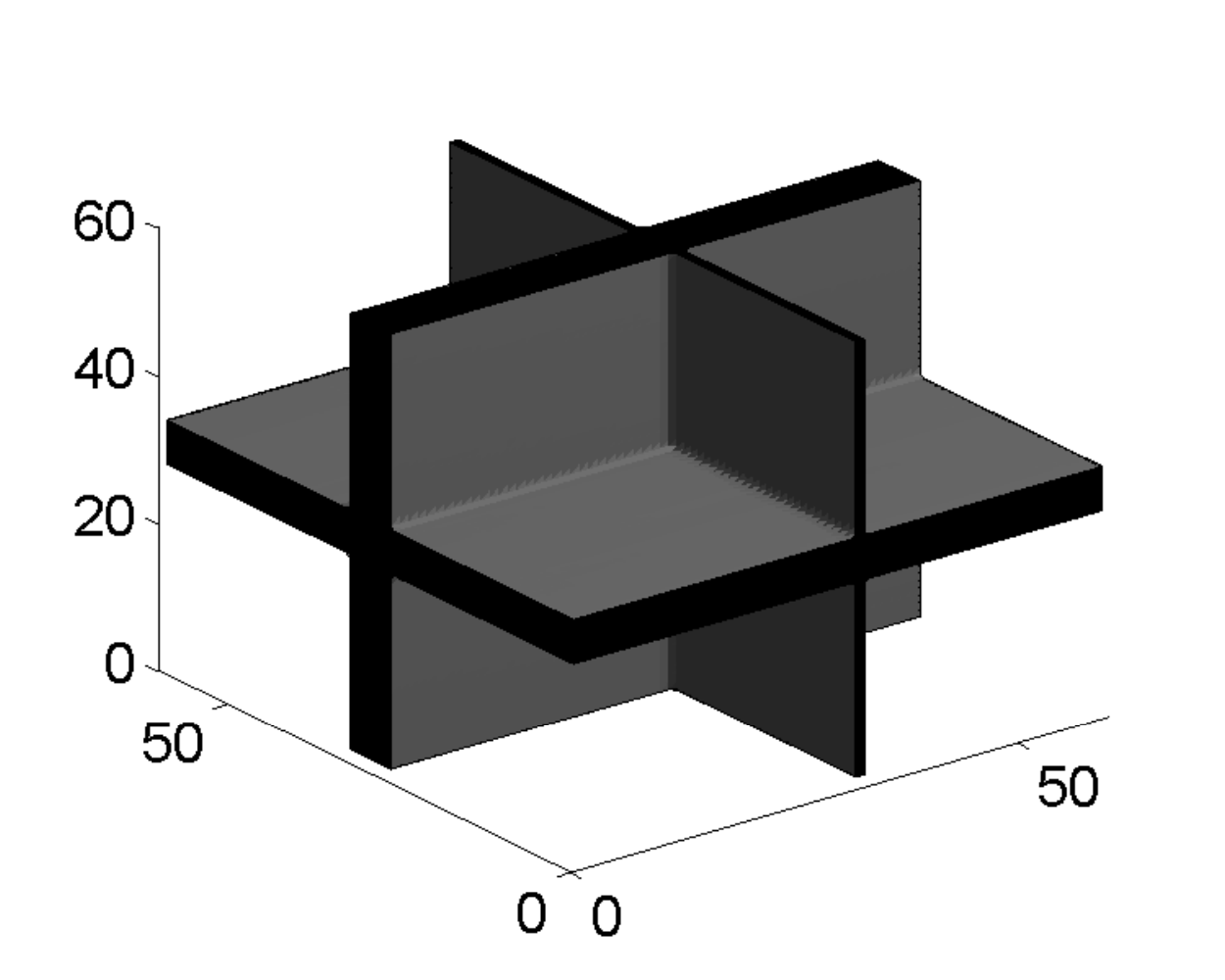}
\end{tabular}
\end{figure}

\begin{figure}\caption{Objective values of BSG and BCGD for solving \eqref{eq:lrtc} on a $32\times32\times32$ low-rank tensor. Left: $\ell_1$ penalty parameter $\lambda=0$; Right: $\lambda=\frac{1}{N}$}\label{fig:3crs}
\centering
\includegraphics[width = 0.35\textwidth]{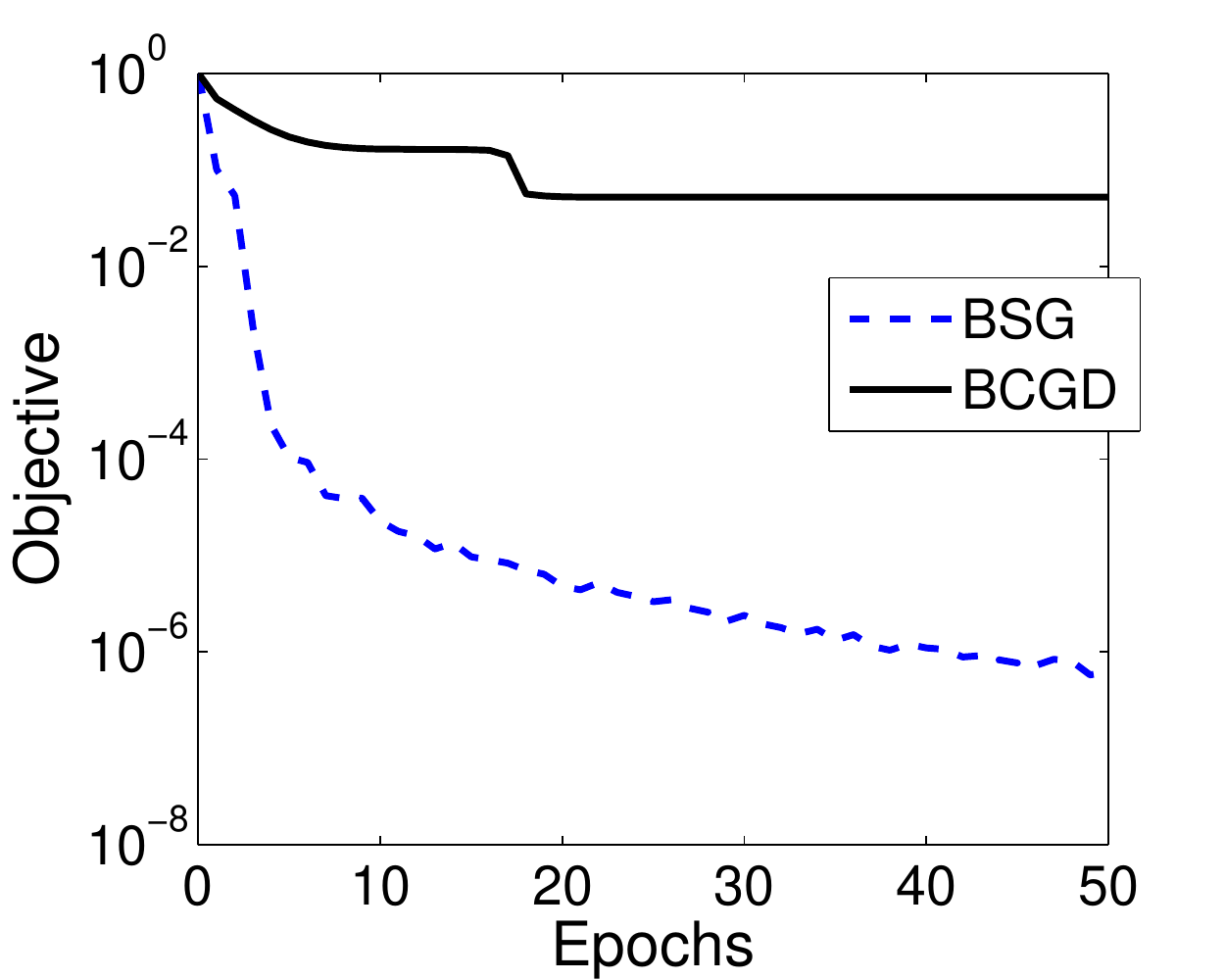}
\includegraphics[width = 0.35\textwidth]{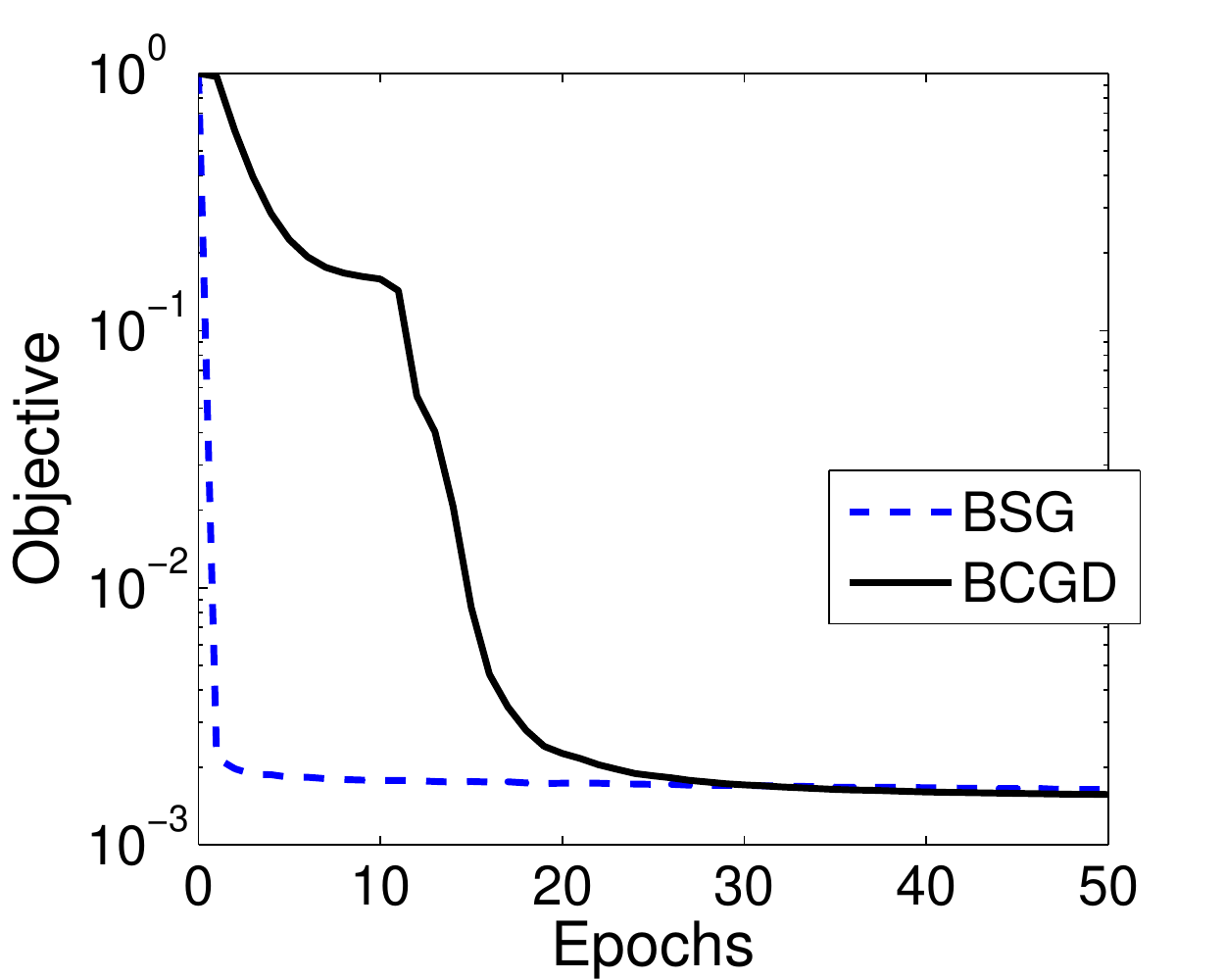}
\end{figure}

\subsection{Bilinear logistic regression}
We compared BSG and BCGD on  the  problem:
\begin{equation}\label{eq:blr}
\min_{\vU,\vV,b} \frac{1}{N}\sum_{\ell=1}^N\log\big(1+\exp[-y_\ell(\tr(\vU^\top\vX_\ell\vV)+b)]\big),
\end{equation}
where $\{(\vX_\ell,y_\ell)\},\ell=1,\ldots,N,$ were given training samples with class labels $y_\ell\in\{+1,-1\}$. The bilinear logistic regression appears to be first used in \cite{dyrholm2007bilinear} for EEG data classification. It applies the matrix format of the original data, in contrast to the standard linear logistic regression which collapses each feature matrix into a vector. It has been shown that the bilinear logistic regression outperforms the standard linear logistic regression in many applications such as brain-computer interface \cite{dyrholm2007bilinear} and visual recognition \cite{shi2014sparse}.

In this test, we used the EEG dataset IVb from BCI competition III\footnote{http://www.bbci.de/competition/iii/} and the dataset concerns motor imagery with uncued classification task. The 118 channel EEG was recorded from a healthy subject sitting in a comfortable chair with arms resting on armrests. Visual cues (letter presentation) were shown for 3.5 seconds, during which the subject performed: left hand, right foot, or tongue. The data was sampled at 100 Hz, and the cues of ``left hand'' and ``right foot'' were marked in the training data. We chose all the 210 marked data points, and for each data point We randomly subsampled 100 temporal slices independently for 10 times to get, in total, 2,100 samples of size $118\times 100$.

We set $m_k=64,\forall k$ in \eqref{eq:appf} for BSG and used the same random starting point for BSG and BCGD. The left plot of Figure \ref{fig:bci} depicts their convergence behaviors. From the figure, we see that BSG significantly outperformed BCGD within 50 epochs. Running BCGD to more epochs, we observed that BCGD could later reach a similar objective as that of BSG. The right plot of Figure \ref{fig:bci} shows the prediction accuracy of the solutions of BSG and BCGD, both of which ran to 30 epochs, and  that of the result returned by LIBLINEAR \cite{fan2008liblinear}, which solved linear logistic regression to its default tolerance. We ran the three methods 20 times. For each run, we randomly chose 2,000 samples for training and the remaining ones for testing. From the figure, we see that the \emph{bilinear} logistic regression problem solved by BSG gave consistently higher prediction accuracies than the \emph{linear} logistic regression problem. The low accuracies given by BCGD were results of its non-convergence in 30 epochs, which can be observed from the left plot of Figure \ref{fig:bci}. Running to more epochs, BCGD will eventually give similar predictions as  BSG. However, that will take much more time.

\begin{figure}\caption{Left: objective values of  \eqref{eq:blr} (lower is better) given by BSG and BCGD on the BCI EEG data within 50 epochs; Right: prediction accuracies (higher is better) of 20 independent runs by BSG and BCGD within 30 epochs and also by LIBLINEAR.}\label{fig:bci}
\centering
\includegraphics[height = 0.3\textwidth]{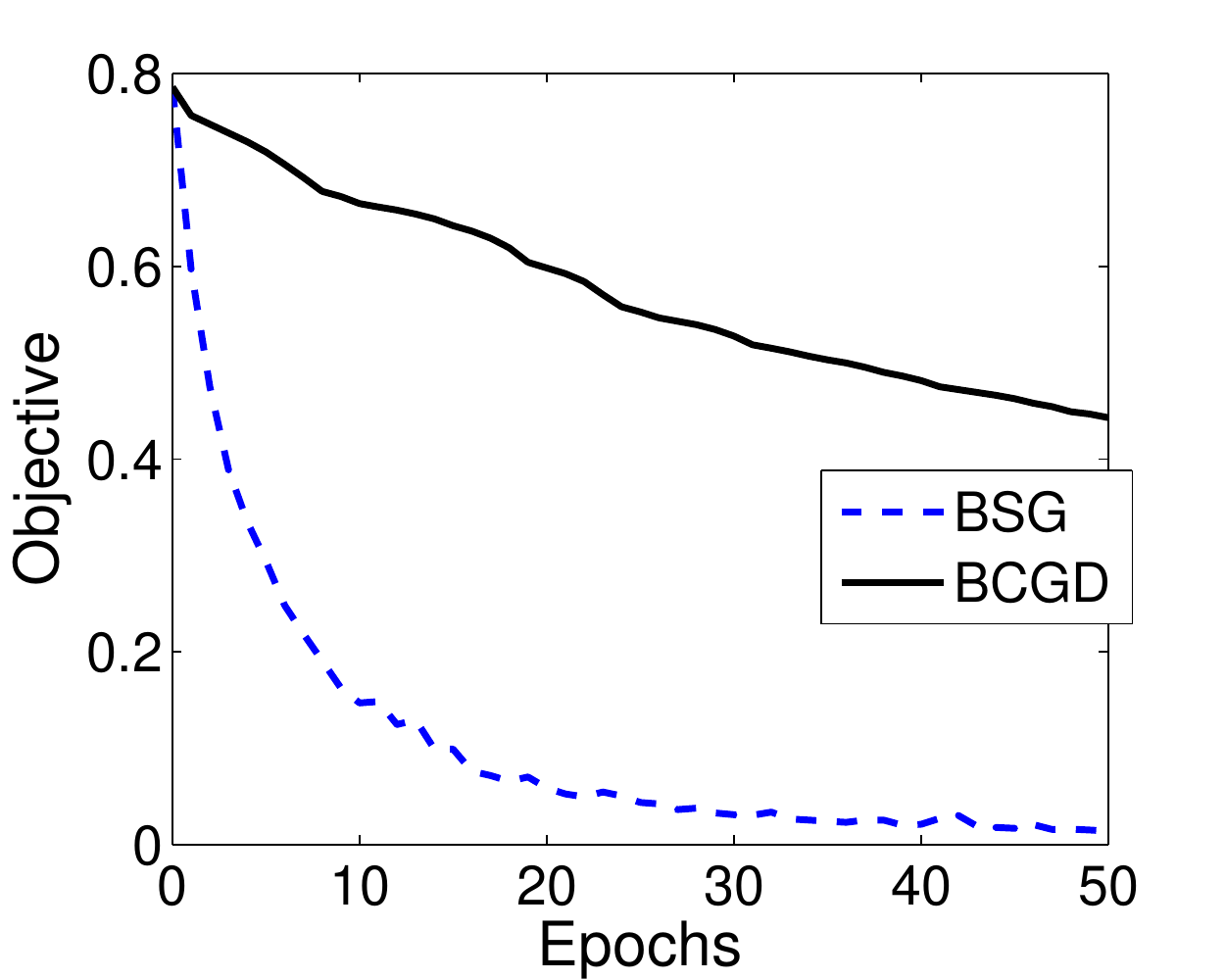}
\includegraphics[height = 0.3\textwidth]{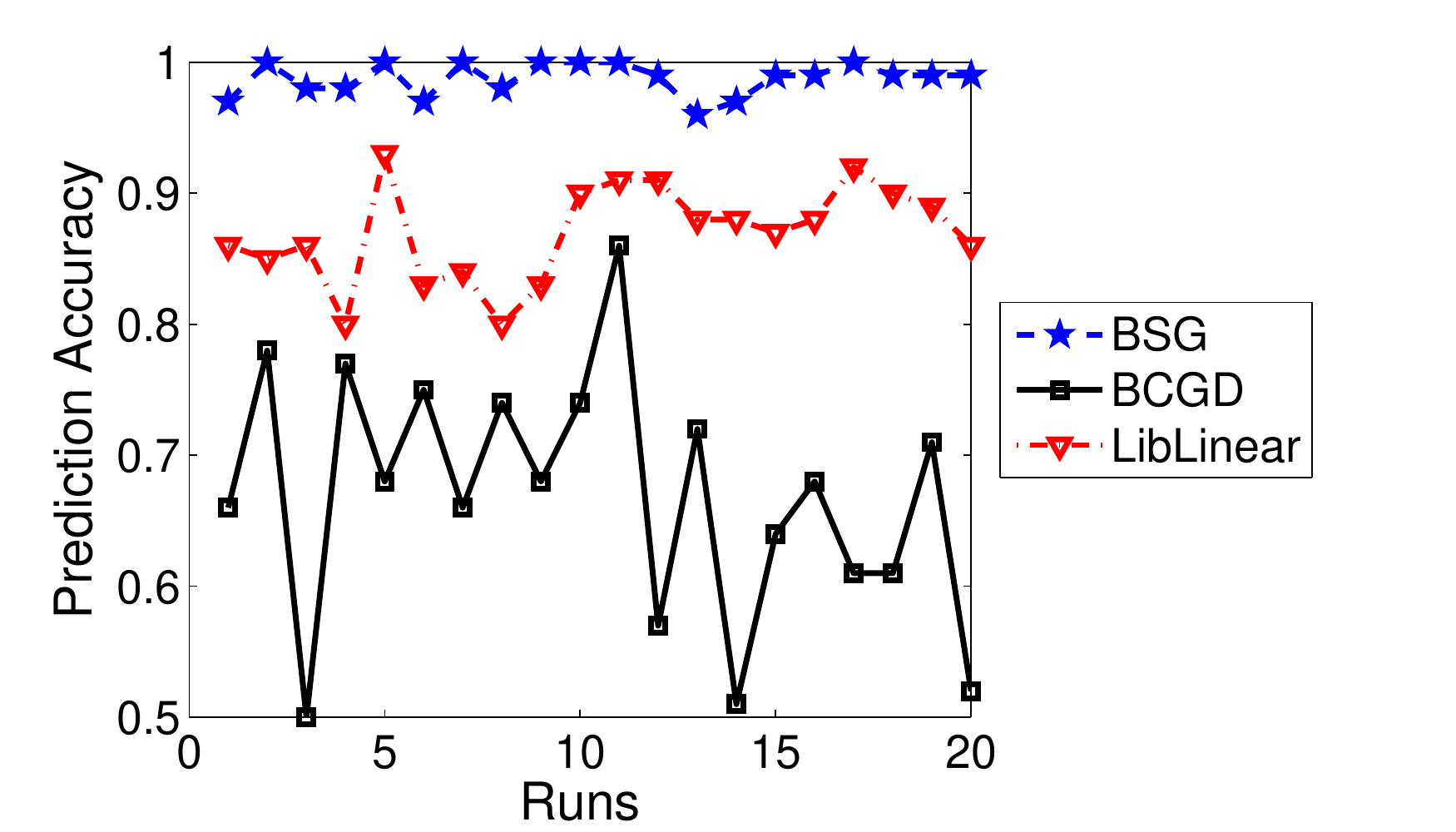}
\end{figure}

\section{Conclusions}
We have proposed a BSG (block stochastic gradient) method and analyzed its convergence for both convex and nonconvex problems. The method has a convergence rate similar to that of the SG (stochastic gradient) method for convex programming, and its convergence has been established in terms of the expected violation of first-order optimality conditions for the nonconvex case. Numerical results demonstrate its clear advantages over SG and a BSMD (block stochastic mirror descent) method on the tested convex problems and  over the BCGD (block coordinate gradient descent) method one the tested nonconvex problems.

\section*{Acknowledgements}
This work was supported in part by NSF grant DMS-1317602 and ARO MURI grant W911NF-09-1-0383.

\appendix
\section{Proof of some lemmas}\label{sec:app}
We give the proofs of some  lemmas in the paper.
\subsection{Proof of Lemma \ref{lem:indp}}
The result in \eqref{eq:indp} can be shown by
\begin{align*}
\EE\langle \vu^k, \bm{\delta}_i^k\rangle = & \EE_{\bm{\Xi}_{[k-1]}}\left[\EE\big[\langle \vu^k, \bm{\delta}_i^k\rangle|\bm{\Xi}_{[k-1]}\big]\right]\\
=&\EE_{\bm{\Xi}_{[k-1]}}\left[\big\langle \EE\big[\vu^k|\bm{\Xi}_{[k-1]}\big], \EE\big[\bm{\delta}_i^k|\bm{\Xi}_{[k-1]}\big]\big\rangle\right]\\
\le &\EE_{\bm{\Xi}_{[k-1]}}\left[\big\|\EE[\vu^k|\bm{\Xi}_{[k-1]}]\big\|\cdot\big\|\EE[\bm{\delta}_i^k|\bm{\Xi}_{[k-1]}]\big\|\right]\\
\le & A(\max_j\alpha_j^k)\EE_{\bm{\Xi}_{[k-1]}}\left[\big\|\EE[\vu^k|\bm{\Xi}_{[k-1]}]\big\|\right]\\
\le & A(\max_j \alpha_j^k)\EE\|\vu^k\|,
\end{align*}
where the second equality follows from the conditional independence between $\vu^k$ and $\bm{\delta}_i^k$, and the last inequality follows from the Jensen's inequality.

\subsection{Proof of Lemma \ref{lem:bdh}}
We first show the following lemma.
\begin{lemma}\label{lem:bdsubdif}
For any function $\psi$ and positive scalar $\alpha$, it holds that 
\begin{equation}\label{eq:boundh}
\|\frac{1}{\alpha}\big(\vx-\prox_{\alpha \psi}(\vx-\alpha\vy)\big)\|^2\le 2\|\vy\|^2+2\|\partial \psi(\prox_{\alpha \psi}(\vx-\alpha\vy))\|^2,
\end{equation}
where $\|\partial\psi(\vz)\|=\sup_\vu\{\|\vu\|: \vu\in\partial \psi(\vz)\}$, and if $\partial \psi(\vz)=\emptyset$, we let $\|\partial\psi(\vz)\|=+\infty$ by convention. 
\end{lemma}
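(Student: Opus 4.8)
The plan is to read off the value of the map $\vx\mapsto\frac1\alpha\big(\vx-\prox_{\alpha\psi}(\vx-\alpha\vy)\big)$ directly from the first-order optimality condition of the proximal subproblem, and then apply one elementary squared-norm inequality. Write $\vz:=\prox_{\alpha\psi}(\vx-\alpha\vy)$; by definition $\vz$ is a global minimizer of
\[
h(\vu):=\psi(\vu)+\tfrac{1}{2\alpha}\|\vu-\vx+\alpha\vy\|^2 .
\]
(We tacitly assume the minimum is attained so that $\prox_{\alpha\psi}(\vx-\alpha\vy)$ is well defined; otherwise the claimed bound is vacuous.)

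First I would invoke Fermat's rule: since $\vz$ minimizes $h$, $\vzero\in\hat\partial h(\vz)$, where $\hat\partial$ is the regular (Fréchet) subdifferential. As $h$ is $\psi$ plus a continuously differentiable function, the sum rule for the regular subdifferential gives $\hat\partial h(\vz)=\hat\partial\psi(\vz)+\tfrac1\alpha(\vz-\vx+\alpha\vy)$, and $\hat\partial\psi(\vz)\subseteq\partial\psi(\vz)$ (the limiting subdifferential of \cite{rockafellar1998variational}). Hence $\vzero\in\partial\psi(\vz)+\tfrac1\alpha(\vz-\vx+\alpha\vy)$, equivalently $\tfrac1\alpha(\vx-\vz)-\vy\in\partial\psi(\vz)$. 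In particular $\partial\psi(\vz)\neq\emptyset$, so the right-hand side of \eqref{eq:boundh} is finite, and this inclusion produces an explicit subgradient $\vd:=\tfrac1\alpha(\vx-\vz)-\vy\in\partial\psi(\vz)$, i.e. $\tfrac1\alpha(\vx-\vz)=\vy+\vd$. Then, using $\|\va+\vb\|^2\le 2\|\va\|^2+2\|\vb\|^2$,
\[
\Big\|\tfrac1\alpha\big(\vx-\prox_{\alpha\psi}(\vx-\alpha\vy)\big)\Big\|^2=\|\vy+\vd\|^2\le 2\|\vy\|^2+2\|\vd\|^2\le 2\|\vy\|^2+2\|\partial\psi(\vz)\|^2,
\]
which is exactly \eqref{eq:boundh}.

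The only delicate point — and hence the ``main obstacle,'' although it is a standard fact rather than a genuine difficulty — is the subdifferential calculus in the nonconvex, nonsmooth regime: one must be certain that Fermat's rule together with the sum rule for a smooth addend yields membership in the limiting subdifferential $\partial\psi$; this is classical (see \cite{rockafellar1998variational}). In the convex case the computation is completely transparent, since then $\hat\partial=\partial$ is the convex subdifferential. This lemma is then applied with $(\psi,\vx,\vy,\alpha)=(\hat r_i,\vx_i^k,\tilde{\vg}_i^k,\alpha_i^k)$, combined with Remark~\ref{rm:exlip} (so $\|\partial r_i\|\le L_{r_i}$; for $i\in\cI_2$ one may alternatively use nonexpansiveness of the projection, which avoids bounding the normal cone), Remark~\ref{rm:bditer} and \eqref{eq:var}, to obtain Lemma~\ref{lem:bdh}.
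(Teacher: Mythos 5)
Your proof is correct and follows essentially the same route as the paper: identify $\vz=\prox_{\alpha\psi}(\vx-\alpha\vy)$, extract the subgradient inclusion $\frac{1}{\alpha}(\vx-\vz)-\vy\in\partial\psi(\vz)$ from the first-order optimality condition of the proximal subproblem, and conclude with $\|\va+\vb\|^2\le 2\|\va\|^2+2\|\vb\|^2$. Your extra care in justifying the optimality condition via Fermat's rule, the sum rule for a smooth addend, and the inclusion of the regular into the limiting subdifferential simply makes explicit what the paper leaves implicit.
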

\begin{proof}
Let $\vz=\prox_{\alpha \psi}(\vx-\alpha\vy)$. Then 
$\vzero\in\vy+\frac{1}{\alpha}(\vz-\vx)+\partial\psi(\vz),$ namely, for some $\vu\in\partial\psi(\vz)$, it holds $\frac{1}{\alpha}(\vx-\vz)=\vy+\vu$. Hence,
$$\|\frac{1}{\alpha}(\vx-\vz)\|^2\le 2\|\vy\|^2+2\|\vu\|^2,$$
which completes the proof.
\end{proof}

Now we are ready to prove Lemma \ref{lem:bdh}.
For $i\in\cI_1$, we have
$$\tilde{\vh}_i^k=\frac{1}{\alpha_i^k}\left(\vx_i^k-\prox_{\alpha_i^k r_i}(\vx_i^k-\alpha_i^k\tilde{\vg}_i^k)\right),$$
and thus from Lemma \ref{lem:bdsubdif} and Remark \ref{rm:exlip}, it follows that
$$\EE\|\tilde{\vh}_i^k\|^2\le 2\EE\|\tilde{\vg}_i^k\|^2+2L_{r_i}^2\le 4\EE\|\vg_i^k\|^2+4\EE\|\bm{\delta}_i^k\|^2+2L_{r_i}^2\le 4M_\rho^2+4\sigma_k^2+2L_{r_i}^2,$$
where we have used the Cauchy-Schwarz inequality in the second inequality.
For $i\in\cI_2$, we have
\begin{align*}
\EE\|\tilde{\vh}_i^k\|^2=&\frac{1}{(\alpha_i^k)^2}\EE\left\|\vx_i^k-\cP_{\cX_i}\big(\vx_i^k-\alpha_i^k(\tilde{\vg}_i^k+\tilde{\nabla} r_i(\vx_i^k))\big)\right\|^2\\
\le &\EE\|\tilde{\vg}_i^k+\tilde{\nabla} r_i(\vx_i^k)\|^2\le 4M_\rho^2+4\sigma_k^2+2L_{r_i}^2,
\end{align*}
where we have used the nonexpansiveness of the projection operator in the first inequality. This completes the proof of Lemma \ref{lem:bdh}.

\subsection{Proof of Lemma \ref{lem:subrate}}
The result can be proved by induction. First, consider the case of $a\le 1$. When $k=1$, \eqref{eq:subrate} obviously holds. Suppose it holds for some $k\ge 1$. Then 
\begin{align*}
A_{k+1}-\frac{c}{k+1}\le &\big(1-\frac{a}{k}\big)\frac{c}{k}+\frac{b}{k^2}-\frac{c}{k+1}\\
= & \frac{1}{k^2}\left(b-\big(a+\frac{k}{k+1}\big)c\right)\\
\le &\frac{1}{k^2}\left(b-\big(a+\frac{k}{k+1}\big)\frac{b}{a}\right) < 0,
\end{align*}
which shows $A_{k+1}\le c/(k+1)$.

Secondly, consider $a>1$. When $k=\lfloor a \rfloor+1$, it holds from
$$A_{\lfloor a \rfloor+1}\le \left(1-\frac{a}{\lfloor a \rfloor}\right)A_{\lfloor a \rfloor}+\frac{b}{(\lfloor a \rfloor)^2}\le \frac{b}{(\lfloor a \rfloor)^2}\le \frac{2b}{(\lfloor a \rfloor+1)(a-1)}.$$
Suppose \eqref{eq:subrate} holds for some $k\ge \lfloor a \rfloor+1$. Then 
\begin{align*}
A_{k+1}-\frac{c}{k+1}\le &\big(1-\frac{a}{k}\big)\frac{c}{k}+\frac{b}{k^2}-\frac{c}{k+1}\\
= & \frac{1}{k^2}\left(b-\big(a+\frac{k}{k+1}\big)c\right)\\
\le &\frac{1}{k^2}\left(b-\big(a+\frac{k}{k+1}\big)\frac{2b}{a-1}\right) < 0,
\end{align*}
which indicates $A_{k+1}\le c/(k+1)$ and completes the proof.

\subsection{Proof of Lemma \ref{lem:limmax}}
Suppose $\lim_{k\to\infty}a_{k+1}/a_k=\nu$. We prove the result for the cases of $\nu<\eta$ and $\nu=\eta$, and the case of $\nu>\eta$ can be shown in a similar way as that of $\nu<\eta$. 

\textbf{Case 1:} $\nu<\eta$. Since $\lim_{k\to\infty}a_{k+1}/a_k = \nu$, for $\epsilon=\frac{\eta-\nu}{2}>0$, there exists a sufficiently large integer $K_1>0$ such that $a_{k+1}/a_k\le \nu+\epsilon=\frac{\nu+\eta}{2},\,\forall k\ge K_1$. Therefore, $a_k\le \big(\frac{\nu+\eta}{2}\big)^{k-K_1}a_{K_1},\,\forall k\ge K_1$. Since $\nu<\eta$, we can choose another sufficiently large integer $K\ge K_1$ to have $a_k\le \eta^k,\,\forall k\ge K.$  Hence, $e_k=\eta^k,\,\forall k\ge K$, and the limit of $e_{k+1}/e_k$ is $\eta$. 

\textbf{Case 2:} $\nu=\eta$. If $a_k\ge \eta^k,\forall k$, or $a_k\le \eta^k, \forall k$, then the result is obvious. Otherwise, there must exist a sequence $\{n_\ell\}_{\ell=1}^\infty$ such that
$$\left\{\begin{array}{ll}
a_k\ge \eta^k, & \text{ if } n_{2m-1}\le k< n_{2m}\\[0.1cm]
a_k< \eta^k, & \text{ if } n_{2m}\le k < n_{2m+1}
\end{array}\right.$$
Note that if $n_{2m-1}\le k<k+1< n_{2m}$ or $n_{2m}\le k<k+1< n_{2m+1}$, then it is easy to have $e_{k+1}/e_k\to\eta$ as $k\to\infty$. In addition,
$$\frac{e_{n_{2m}}}{e_{n_{2m-1}}}=\frac{\eta^{n_{2m}}}{a_{n_{2m}-1}}\le \frac{\eta^{n_{2m}}}{\eta^{n_{2m}-1}}=\eta,$$
and 
$$\frac{e_{n_{2m}}}{e_{n_{2m}-1}}=\frac{\eta^{n_{2m}}}{a_{n_{2m}-1}}\ge \frac{a_{n_{2m}}}{a_{n_{2m}-1}}\to\eta,\text{ as }m\to\infty.$$ 
Hence, $\lim_{m\to\infty}e_{n_{2m}}/e_{n_{2m}-1}=\eta$, and in the same way, one can show $$\lim_{m\to\infty}e_{n_{2m+1}}/e_{n_{2m+1}-1}=\eta.$$ Therefore, the limit of $e_{k+1}/e_k$ is $\eta$. This completes the proof.

\bibliographystyle{siam}

\end{document}